\documentclass[12pt,reqno]{amsart}
\usepackage{fullpage}
\usepackage{amsfonts}
\usepackage{amssymb}
\usepackage{times}
\usepackage{graphicx}
\usepackage{amsmath,amssymb,bm,bbm}
\usepackage{amsmath}
\usepackage{mathtools}
\usepackage{enumerate}
\usepackage{hyperref}
\usepackage{epsfig}
\usepackage{textcomp}
 
\theoremstyle{plain}  
\newtheorem{thm}{Theorem}[section]
\newtheorem{cor}[thm]{Corollary}
\newtheorem{lem}[thm]{Lemma}
\newtheorem{prop}[thm]{Proposition}

\newtheorem{defn}[thm]{Definition}

\newtheorem{conj}[thm]{Conjecture}

\newcommand{\thmref}[1]{Theorem~\ref{#1}}

\theoremstyle{remark}
\newtheorem{rem}[thm]{Remark}

\numberwithin{equation}{section}

\def\f{\frac}

\def\({\left(}
\def \){ \right)}
\def\[{\left[}
\def \]{ \right]}
\def\Bl{\Bigl}
\def\Br{\Bigr}

\def\Ga{\Gamma}
\def\ta{\theta}
\def\al{{\alpha}}
\def\be{{\beta}}
\def\da{{\delta}}

\def\a{{\alpha}}
\def\b{{\beta}}

\def\ga{{\gamma}}

\def\t{{\theta}}
\def\ld{{\lambda}}
\def\l{{\lambda}}
\def\d{{\delta}}

\def\va{\varepsilon}

\def\NN{{\mathbb N}}

\def\RR{{\mathbb R}}
\def\SS{{\mathbb S}}

\def\supp{\operatorname{supp}}

\def\sph{\mathbb{S}^{d}}

\newcommand{\wh}{\widehat}

\begin{document}
\title[]{ISOTROPIC POSITIVE DEFINITE FUNCTIONS ON SPHERES}

\author{Han Feng}
\email{hanfeng@cityu.edu.hk}
\author{Yan Ge}
\email{yge3@ualberta.ca}

\date{Jan.01, 2022}
\keywords{Positive definite functions, sphere, positive integrals, Jacobi
	polynomials. }
\subjclass[2000]{33C45, 33C50, 42A82, 60E10.}

\begin{abstract}
	In this paper, we investigate the relationship between positive  definite functions on the unit sphere $\sph$ and on the Euclidean space $\RR^d$. For the dimension $d$ to be odd, a new technique is developed to establish the inheritance of positive (semi-)definite property from $\RR^d$ to $\sph$ and    the converse. For $d=2$, it is proved that  a function defined by
	$$f_{\t,\delta}(t)=(\t-t)_+^\delta, \quad \delta\geq \f{d+1}2 $$
	is positive  definite on the unit sphere $\mathbb{S}^2$ by restricting $\t$ in an absolute range. Our results can verify a conjecture proposed by  R.K. Beatson, W. zu Castell, Y. Xu   and a sharp P\'{o}lya type criterion for positive definite functions  on spheres.
\end{abstract}
\maketitle

\section{Introduction}
Given a metric space $(\Omega, \rho)$, an isotropic function $g$ is called positive definite (positive semi-definite) on $\Omega$ if for any integer $N\in \NN$ and
$\mathbf X_N=\{x_1,\ldots, x_N\}\subset \Omega$, the corresponding $N\times N$ symmetric matrix
$$g[\mathbf X_N]=\Bl\{g\Bigl(\rho(x_i,x_j)\Bigr)\Br\}_{i,j=1}^N$$
is positive definite (positive semi-definite), which means
$$\mathbf c^T g[\mathbf X_N] \mathbf c >0 (\geq 0), \quad \text{for all nonzero}\  \mathbf c\in \RR^N.$$
Positive definite functions play important roles in many applications, like scatter interpretation, kernel method and spatial statistics (see \cite{Buh}, \cite{BOOK}, \cite{FS}, \cite{HW}, \cite{Stewart}, \cite{Gnei2}).
The concern of construction proper positive definite functions, therefore, has been attracting interest.
In the past, such a problem has been extensively studied for the case $\Omega=\RR^d$, the Euclidean space, see \cite{Askey},\cite{Gnei2}. The remarkable Bochner's criterion \cite{BO1},\cite{BO2}, states that
a continuous function $g$ is isotropic positive definite on $\RR^d$ if and only if it can be the Fourier transform of a nonnegative finite-valued Borel measure on $\mathbb{R}^d$.
As a typical example, with the positivity of Bessel integral \cite{Gasper}, it can be proved that for any $\theta>0$, the function defined by\begin{equation}\label{trunct}	f_{\theta, \delta}(t):=(\theta-t)_{+}^{\delta}=\left\{\begin{array}{ll}		(\theta-t)^{\delta}, & t \leq  \theta \\		0, & t> \theta	\end{array}\right.\end{equation}is isotropic positive definite on $\RR^d$ when  {$\d\geq   \f {d+1} 2$}.

For $x=\(x_1,...,x_{d+1}\)\in \mathbb{R}^{d+1}$, we write $|x|$ as the norm. On unit spheres $\sph:=\{x \in \RR^{d+1}: |x|=1\}$, $d\geq 1$, Schoenberg  established a parallel result of Bochner criterion in his classical paper \cite{S2}. He  characterized that the isotropic continuous semi-positive definite functions on $\sph$ as those
functions of  the Gegenbauer expansion \begin{equation*} 
	g(\t)=\sum_{n=0}^{\infty}a_nC_n^{\l}(\cos\ta), \ \ \ta\in[0,\pi], \ \ \l:=\f{d-1}2
\end{equation*} in which all of the coefficients 	\begin{equation}\label{GegenbauerCoefficient}
	a_n:=\int_0^\pi g(\t)C^\l_n(\cos \t)\sin^{2\l}\t d\t\geq 0,\quad \forall \, n \in \mathbb{N}_0,
\end{equation}
and $\sum_{n=0}^{\infty}a_nC_n^{\l}(1)<\infty$.  
The   positive definite functions on $\sph$ came somewhat later in \cite{CMS}  and were characterized as functions with $a_n>0$ for infinitely many even and infinitely  many odd indices in  \eqref{GegenbauerCoefficient}.

Unfortunately, such criterion is often hard to verify when confronted with a particular example. The difficulty is to evaluate the integrals of \eqref{GegenbauerCoefficient}.
Instead, a P\'{o}lya criterion on the spheres was formulated due to Beatson, Castell and Xu \cite{BzX}, which provides a simple and efficient way to determine positive definite. Recently, Buhmann and J\"{a}ger \cite{BJ} established the  P\'{o}lya criterion on the spheres for conditional positive definite functions. However, the proof in \cite{BzX} relies on
a conjecture about the positive definite property of \eqref{trunct}, which can be equivalently stated as   the following  by adding the convergence of the series $\sum_{n=0}^{\infty}a_nC_n^{\l}(1)<\infty$.
\begin{conj}\label{conj1.9}
	Let  $\delta\geq \l+1$ and let $\lambda=\f{d-1}2 $.  Then for any $\t \in (0,\pi)$, the function
	$$f_{\t,\delta}(t)=(\t-t)_+^\delta$$    is isotropic positive definite on $\mathbb{S}^d$.  
\end{conj}

This conjecture was proved for $d=3,5,7$ in \cite{BzX}. Related recent work on this conjecture for the higher dimensional case can be found in the papers \cite{Xu}, \cite{NM}, \cite{LM}.
In this paper, we first consider the case when $d$ is odd and shall provide  a new technique to show that the positive definite property can be inherited from Euclidean space to the unit sphere, which will immediately verify the conjecture for all odd dimensions.  Our first main theorem states as follows.
\begin{thm}\label{thm1.3}
	Let $d$ be an odd integer $\geq 3$. Suppose that $g$ is a continuous function on $[0,\infty)$ with compact support on $[0,\pi]$. If $g$ is isotropic positive definite on $\RR^d$, then so it is on $\sph$.
\end{thm}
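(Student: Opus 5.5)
The plan is to verify Schoenberg's criterion directly: every Gegenbauer coefficient $a_n$ in \eqref{GegenbauerCoefficient} should be nonnegative, and positive for infinitely many even and infinitely many odd $n$. The idea is to express each $a_n$ as an integral of the Euclidean Fourier transform of $g$ against a nonnegative kernel, using that $\l=\f{d-1}2$ is a nonnegative integer when $d$ is odd.

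First I would record the two sides in one-variable form. For odd $d$ the radial Fourier transform on $\RR^d$ reduces to elementary functions:
$$\wh g(r)=c_d\Bl(-\f1r\f{d}{dr}\Br)^{\l}\int_0^\pi g(t)\cos(rt)\,dt .$$
By Bochner's criterion, $\wh g(r)\ge 0$ for all $r\ge0$ and $\wh g\not\equiv0$. Since $g$ is continuous with compact support, $\wh g$ is real-analytic, so $\wh g>0$ off a discrete set.

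On the sphere side, for integer $\l$ the weight $C_n^\l(\cos\t)\sin^{2\l}\t$ is a finite trigonometric polynomial in $\t$. The cleanest way to see this is through the classical formula
$$\sin^{2\l}\t\, C_n^\l(\cos\t)=\kappa_{n,\l}\Bl(\f{1}{\sin\t}\f{d}{d\t}\Br)^{\l-1}\Bl[\sin\t\,\sin((n+\l)\t)\Br]\cdot \sin^{2\l-1}\t ,$$
or equivalently the Rodrigues formula. When $\l=1$ this reads $a_n=\int_0^\pi g(\t)\sin((n+1)\t)\sin\t\,d\t$.

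The second step is to integrate by parts $\l$ times in $\t$, in the spirit of the operator $\f1r\f{d}{dr}$ above. This should transfer the derivatives onto the cosine transform of $g$. Boundary terms vanish because $g$ is supported in $[0,\pi]$, and, after the even $2\pi$-periodic extension, the endpoint $\t=\pi$ is harmless. The goal is an identity
$$a_n=\int_0^\infty \wh g(r)\,K_{n,\l}(r)\,dr ,$$
where $K_{n,\l}$ is explicit. Concretely, $K_{n,\l}$ is obtained by applying the adjoint of $(\f1r\f d{dr})^\l$ to the Fourier transform of $\chi_{[0,\pi]}(\t)$ times the trigonometric polynomial from the first step. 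An alternative formulation, perhaps more transparent, is to write $a_n$ as a finite positive combination of values $\wh g(m)$ at integers $m\equiv n+\l \pmod 2$. Using $\supp g\subset[0,\pi]$, one would rewrite $\int_0^\pi g(\t)\cos(m\t)\,d\t$ as a periodized, Poisson-summation type sum, which turns the Euclidean transform sampled on the lattice into Fourier--cosine coefficients of $g$. For $d=3$ a direct computation should give $a_n=c\,(n+1)\,\wh g(n+1)$ up to a positive constant. I would do this case first as a template.

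The main obstacle is the general odd-$d$ identity and the positivity of its kernel or coefficients. After the $\l$ integrations by parts, cross terms involving lower derivatives of $\sin^{2\l}\t$ appear, and they must combine into a nonnegative expression in $\wh g$. I would try to prove, by induction on $\l$ (moving from $\RR^{d}$ to $\RR^{d+2}$ and from $\mathbb S^{d}$ to $\mathbb S^{d+2}$), a relation of the form
$$a_n^{(\l+1)}=\gamma_{n,\l}\,\bigl(\text{value of } \wh g^{(d+2)} \text{ at } n+\l+1\bigr)+\text{nonnegative terms}.$$
This would use the recursions $\f{d}{dx}C_n^\l=2\l C_{n-1}^{\l+1}$ and $\f1r\f{d}{dr}$ relating $\wh g$ in dimensions $d$ and $d+2$.

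Once $a_n\ge0$ is established, strict positivity follows: $a_n$ vanishes only if $\wh g$ vanishes at the relevant sample points, and these form an infinite set of both parities, whereas the zero set of the nonzero analytic function $\wh g$ is discrete. Since $\wh g$ is nonzero and analytic, it cannot vanish at all large integers of one parity. Making this step rigorous may require the positive-definite (not merely semi-definite) hypothesis. Finally, convergence of $\sum a_nC_n^\l(1)$ follows from $a_n\ge0$ and the continuity of $g$ at $0$.
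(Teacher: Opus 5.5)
There is a genuine gap. Your overall plan, an exact formula expressing $a_n$ as an integral of $\widehat g$ against a nonnegative kernel, can be made to work. But the proposal stops exactly at the step that carries all the content, the positivity of that kernel, and the concrete forms you suggest for it are false.

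Already for $d=3$, $a_n=\int_0^\pi g(\theta)\sin\theta\,\sin((n+1)\theta)\,d\theta$ is not a multiple of $(n+1)\widehat g(n+1)\propto\int_0^\pi g(t)\,t\sin((n+1)t)\,dt$, because the weights $\sin\theta$ and $\theta$ differ. Writing $G(r)=\int_0^\pi g(t)\cos(rt)\,dt$, the correct identity is
\begin{equation*}
a_n=\tfrac12\bigl[G(n)-G(n+2)\bigr]=\tfrac12\int_n^{n+2}\!\!\int_0^\pi g(t)\,t\sin(rt)\,dt\,dr=c\int_n^{n+2}r\,\widehat g(r)\,dr,\qquad c>0.
\end{equation*}
This is an average of $\widehat g$ over $[n,n+2]$, not a sample.

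In fact samples of $\widehat g$ cannot carry the positivity at all. Take $\phi(x)=\frac{\sin|x|}{|x|}\chi_{\{|x|\le\pi/2\}}$ on $\RR^3$. Then $g=\phi*\phi$ is positive definite and supported in the ball of radius $\pi$, and $\widehat g=\widehat\phi^{\,2}$ vanishes at every odd integer $\ge3$. Yet by the identity above all its coefficients $a_n$ on $\mathbb S^3$ are positive. This rules out the ``finite positive combination of $\widehat g(m)$'' formulation and the point-value induction you sketch. It also shows your strict-positivity argument is invalid: analyticity does not prevent $\widehat g$ from vanishing at all large integers of one parity.

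Likewise, the kernel you describe concretely, built from $\chi_{[0,\pi]}$ times the trigonometric polynomial, changes sign. For $d=3$ it equals $C\,r^{-2}\cos(\pi r)+O(r^{-3})$ with $C\neq0$. The truncation has to be dropped, using $\supp g\subset[0,\pi]$.

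The missing idea is a divided-difference (B-spline) identity, in three steps.

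(i) $C_n^\lambda(\cos\theta)$ is orthogonal to $\cos(k\theta)$, $k<n$, in $L^2(\sin^{2\lambda}\theta\,d\theta)$. With a degree and parity count this gives $C_n^\lambda(\cos\theta)\sin^{2\lambda}\theta=\sum_{j=0}^{\lambda}b_j\cos((n+2j)\theta)$, hence $a_n=\sum_{j=0}^{\lambda}b_jG(n+2j)$.

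(ii) This trigonometric polynomial vanishes to order $2\lambda$ at $\theta=0$, so $\sum_j b_j(n+2j)^{2i}=0$ for $0\le i<\lambda$. Set $u_j=(n+2j)^2$ and $\widetilde G(u)=G(\sqrt u)$, which is entire because $G$ is even. Then $a_n=\kappa_n\,[u_0,\dots,u_\lambda]\widetilde G$, a multiple of the $\lambda$-th divided difference, with $\operatorname{sign}\kappa_n=\operatorname{sign}b_\lambda=(-1)^\lambda$.

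(iii) By the Hermite--Genocchi (Peano kernel) formula, $[u_0,\dots,u_\lambda]\widetilde G=\frac1{\lambda!}\int M_n(u)\,\widetilde G^{(\lambda)}(u)\,du$. Here $M_n\ge0$ is the B-spline with knots $u_j$, positive on $(n^2,(n+2\lambda)^2)$. By your own formula for $\widehat g$, $\widetilde G^{(\lambda)}(u)=\bigl(\tfrac1{2r}\tfrac{d}{dr}\bigr)^{\lambda}G(r)=(-1)^\lambda c_d\,\widehat g(r)$ at $r=\sqrt u$, with $c_d>0$. Therefore
\begin{equation*}
a_n=\frac{|\kappa_n|\,c_d}{\lambda!}\int_{n^2}^{(n+2\lambda)^2}M_n(u)\,\widehat g(\sqrt u)\,du>0\qquad\text{for every } n\ge0,
\end{equation*}
because $\widehat g\ge0$ is real-analytic and not identically zero. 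No parity argument and no strict positive definiteness beyond $g\not\equiv0$ are needed.

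With this step supplied, your route becomes a short exact proof, genuinely different from the paper's. The paper instead reduces to small $\theta$ via Lemma~\ref{lem2.3}. It then handles $n\theta$ large, small and intermediate by integration by parts, Bernstein's inequality and the Jacobi--Bessel asymptotics, and finally removes the extra regularity by smoothing. As written, however, your proposal does not prove the key positivity.
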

Some remarks are worthwhile to list.
\begin{rem} The condition of compact support is necessary. A counterexample can be $g(t)=\exp(-t^2)$, which is isotropic positive define on $\RR$ but not on $\mathbb S^1$.
\end{rem}

\begin{rem}
	The result was claimed as well by Nie and Ma \cite[Theorem 2.1]{NM}. However, we find it is difficult to follow the proof of Lemma 3.4 in \cite{NM} which plays a crucial role in the proof of the main theorem.	There exists one example sequence $2^kn+2^kj_0+2^{k-1}j_1+...+j_k$ where $k, n\in\NN$, $j_0,j_1,\ldots,j_k$ are bounded nonnegative integers, which can not be covered by Lemma 3.4 by taking $n=0$ and
	\begin{equation*}
		j_i=
		\begin{cases}
			0& {0\leq i\leq \f k2-1}\\
			1&  {\f k2\leq i\leq k}.
		\end{cases}
	\end{equation*}
	In this example,
	$$2^k\gamma_k+\gamma_0:=2^kn+2^kj_0+2^{k-1}j_1+...+j_k$$ 
	with $\gamma_0=0$ and
	$\gamma_k=n+j_0+2^{-1}j_1+\cdots+2^{-k}j_k=2^{-\f k2+1}-2^{-k}\to 0$ as $k\to \infty$, which implies that the conditions in Lemma 3.4(i) are not satisfied.  On the other hand, $$2^k\gamma_k+\gamma_0=2^k(2^{-\f k2+1}-2^{-k})=2^{k/2+1}-1\to \infty, \ \text{as}\ k\to \infty.$$
	Lemma 3.4(ii) can not be used since $2^kn+2^kj_0+2^{k-1}j_1+...+j_k$ is unbounded at this moment.
	Besides \cite{NM}, another related and similar result was obtained due to Ma and Lu \cite{LM}, in which the positivity of isotropic covariance matrices was proved to be preserved from Euclidean spaces to unit spheres when the dimensions are odd.
\end{rem}

\begin{rem}	
	In contrast with the proof in \cite{NM}, we shall first proceed our proof to functions with high regularity, which allows us to simplify our problem  by applying integration by parts. Then through inducing a proper identity approximation operator, additional regular assumptions will be avoided.
\end{rem}

\begin{cor}\label{cor1.5}
	Let $d$ be odd and let $g$ be an  isotropic   continuous  positive  semi-definite function on $\SS^d$ with   compact support on $[0,\pi]$. Then
	the function
	$$g(t) \( \f {\sin t}{t}\)^{d-1}$$
	is an  isotropic  positive  semi-definite function on both  $\RR^d$ and $\sph$.
\end{cor}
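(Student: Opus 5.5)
The plan is to prove the Euclidean statement first and then deduce the spherical one from Theorem~\ref{thm1.3}. Put $G(t):=g(t)\bigl(\frac{\sin t}{t}\bigr)^{d-1}$. It is continuous on $[0,\infty)$, since $\sin t/t\to 1$ at $0$. It is also supported in $[0,\pi]$, because $g$ is. So once $G$ is shown to be isotropic positive semi-definite on $\RR^d$, Theorem~\ref{thm1.3} transfers the property to $\sph$. (Theorem~\ref{thm1.3} is stated for positive definite functions, but the same argument should give the semi-definite version, which is the version needed here.) The work is therefore the $\RR^d$ part. By Bochner's criterion, since $G$ is continuous with compact support, this amounts to showing $\widehat{G}(\eta)\ge 0$ for all $\eta\in\RR^d$.

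\textbf{Step 1: write $\widehat G$ as an integral over the sphere.} Use polar coordinates $x=t\xi$ on $\RR^d$, with $t\in[0,\pi]$ and $\xi\in\SS^{d-1}$. Identify this ball with the sphere through the exponential map at the north pole $N$, namely $E(t\xi)=(\cos t,\ \xi\sin t)\in\sph$. Then $dx=t^{d-1}\,dt\,d\xi$ and $d\sigma=\sin^{d-1}t\,dt\,d\xi$, so $G(|x|)\,dx=g(\rho(E(x),N))\,d\sigma$. The factor $(\sin t/t)^{d-1}$ was chosen to make exactly this happen. Hence
\[
\widehat G(\eta)=\int_{\sph} g\bigl(\rho(z,N)\bigr)\,e^{-i\langle L(z),\eta\rangle}\,d\sigma(z),
\]
where $L=E^{-1}$ is the logarithm map at $N$. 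This is well defined off the antipode, which is a null set. Since $G$ is radial, I may replace $e^{-i\langle\cdot,\eta\rangle}$ by its spherical average, $|\eta|^{-\nu}t^{-\nu}J_\nu(|\eta|t)$ with $\nu=\lambda-\frac12$, up to a constant.

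\textbf{Step 2: use Schoenberg's expansion.} Expand $g(\rho(z,N))=\sum_n a_n h_n^{-1}C_n^\l(\langle z,N\rangle)$ with $a_n\ge 0$. The series converges uniformly because $\sum a_nC_n^\l(1)<\infty$. This gives $\widehat G(\eta)=\sum_n a_n h_n^{-1}\beta_n(|\eta|)$, where
\[
\beta_n(r)=c\int_0^\pi C_n^\l(\cos t)\,(rt)^{-\nu}J_\nu(rt)\,\sin^{2\l}t\,dt .
\]
It then suffices to prove $\beta_n(r)\ge0$ for every $n$ and every $r>0$.

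\textbf{Step 3 (main obstacle): nonnegativity of $\beta_n(r)$.} This is where oddness of $d$ is used: $\l$ is an integer and $\nu$ is a half-integer, so $t^{-\nu}J_\nu(rt)$ is elementary, namely $\bigl(\frac{1}{t}\frac{d}{dt}\bigr)^{\l-1}\frac{\sin rt}{t}$ up to a power of $r$. I would first try integrating by parts $\l-1$ times. Both $\sin^{2\l}t$ and its derivatives of order below $2\l$ vanish at $t=0$ and $t=\pi$, so no boundary terms survive. The aim is to reduce $\beta_n(r)$ to a Fourier sine integral $\int_0^\pi P_n(t)\sin(rt)\,dt$, where $P_n$ comes from $C_n^\l(\cos t)\sin^{2\l}t$. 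Writing $\sin^{2\l}t\,C_n^\l(\cos t)$ as a trigonometric polynomial with explicitly signed coefficients, via the classical formula for $C_n^\l(\cos t)$ as a sum of $\cos((n-2k)t)$ with positive coefficients, should then make the sign visible.

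If this direct route gets stuck, I would use a fallback. Interpret $\beta_n(r)$ as the Euclidean Fourier transform of the compactly supported radial function $C_n^\l(\cos t)\bigl(\frac{\sin t}{t}\bigr)^{d-1}\mathbf 1_{[0,\pi]}(t)$. Then write this function as a positive superposition, over $\theta\in(0,\pi)$, of spherical truncated powers $f_{\theta,\delta}$, which are positive definite on $\RR^d$ by \cite{Gasper}. That would show nonnegativity by a P\'olya-type decomposition.
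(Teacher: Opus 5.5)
\textbf{There is a genuine gap: the inequality targeted in Step 3 is false, and in fact the statement itself fails.} Your Steps 1 and 2 are a correct reduction, but only to a sufficient condition, and that condition does not hold. Already for $d=3$, $n=0$ (so $\lambda=1$, $\nu=\frac12$), $\beta_0(r)$ is a positive multiple of
\[
\frac1r\int_0^\pi\frac{\sin^2 t}{t}\,\sin(rt)\,dt=\frac1r\Bigl[\tfrac12\mathrm{Si}(r\pi)-\tfrac14\mathrm{Si}\bigl((r+2)\pi\bigr)-\tfrac14\mathrm{Si}\bigl((r-2)\pi\bigr)\Bigr],\qquad \mathrm{Si}(x)=\int_0^x\frac{\sin s}{s}\,ds .
\]
At $r=3$ the bracket is $\approx 0.8374-0.4085-0.4630<0$. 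More generally, for integers $r\ge3$ the integral equals $(-1)^r\int_0^\infty\frac{e^{-\pi rs}}{1+s^2}\sinh^2(\pi s)\,ds$, so it is negative at every odd $r\ge3$.

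The mechanism is general. $C_n^\lambda(\cos t)(\sin t/t)^{2\lambda}$ is an even entire function of $t$, so the radial function is smooth at the origin. Its zero extension is singular only on $|x|=\pi$, where it vanishes to finite order. That singularity produces a sign-changing tail $\approx\cos(\pi r+\phi)\,r^{-k}$, and no non-oscillatory contribution from $x=0$ dominates it. (For the truncated powers $f_{\theta,\delta}$, it is precisely the cone point at $x=0$ that supplies a positive tail.) Your fallback fails for the same reason: a positive superposition of truncated powers would again force $\beta_n\ge0$.

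The paper's route is different and never asks for positivity degree by degree. It iterates Lemma~\ref{lem2.3} to assert that $g(2^\ell t)\bigl(\sin 2^\ell t/\sin t\bigr)^{2\lambda}$ is positive semi-definite on $\mathbb{S}^d$. It then takes the Gegenbauer coefficient of degree $n\approx2^\ell|\xi|$ and passes to the Mehler--Heine limit to get $\widehat G(\xi)\ge0$.

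However, no route can succeed, because the corollary is false as stated. Take $d=3$ and $g(t)=1+\cos t$, i.e.\ the kernel $1+\langle x,y\rangle$. It is continuous, positive semi-definite on $\mathbb{S}^3$, and vanishes at $t=\pi$. With $r=|\xi|$ we have $\widehat G(\xi)=\frac{4\pi}{r}\int_0^\pi(1+\cos t)\frac{\sin^2t}{t}\sin(rt)\,dt$. Using $(1+\cos t)\sin^2t=\frac12+\frac14\cos t-\frac12\cos2t-\frac14\cos3t$, one gets for integers $r\ge4$
\[
\int_0^\pi(1+\cos t)\frac{\sin^2t}{t}\sin(rt)\,dt=-(-1)^r\int_0^\infty\frac{e^{-\pi rs}}{1+s^2}\,(\cosh\pi s-1)\sinh^2(\pi s)\,ds .
\]
This is $\approx-0.0079$ at $r=4$, so $\widehat G<0$ at every even integer $r\ge4$.

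The gap in the paper is its first step. Lemma~\ref{lem2.3} gives positive semi-definiteness only when $g(2^\ell\,\cdot)$ means the even $2\pi$-periodic extension of $g$. For the zero extension the claim fails. With $\ell=1$ the function is $8\cos^4t\,\mathbf 1_{[0,\pi/2]}(t)$, and its degree-$7$ coefficient $8\int_0^{\pi/2}\cos^4t\,\sin t\,\sin 8t\,dt\approx-0.011$ is negative. Restricting the coefficient integral to $[0,2^{-\ell}\pi]$ silently drops the reflected copies of $g$ centred at $t=2k\pi/2^\ell$, $k\ge1$. By the example above, their contribution does not disappear in the limit. So the Euclidean half of the statement needs additional hypotheses on $g$, which neither your argument nor the paper's provides.
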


The converse of \thmref{thm1.3} will  be considered for the  positive semi-definite functions.
In this situation, the restriction on dimensions can be removed. Precisely, we state the following theorem.
\begin{thm}\label{thm1.4}
	Suppose that $g$ is a continuous function on $[0,\infty)$ with compact support on $[0,\pi]$. Let $d> 1$. If $g(\t^{-1}\cdot )$ is isotropic  positive  semi-definite on $\sph$ for $\t\in(0,1)$, then $g$ is isotropic positive  semi-definite on $\RR^d$.
\end{thm}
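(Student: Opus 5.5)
Theorem~\ref{thm1.4} can be proved by a local (blow-up) argument: points of $\RR^d$ are placed on a small patch of $\sph$ and the sphere is rescaled, so that its geodesic distance tends to the Euclidean one. This uses only the definition of positive semi-definiteness and the continuity of $g$, not Schoenberg's expansion.

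Fix $N$, points $\mathbf X_N=\{x_1,\dots,x_N\}\subset\RR^d$ and $\mathbf c\in\RR^N$. We must show
$$\sum_{i,j} c_ic_j\, g(|x_i-x_j|)\ge 0 .$$
Fix the north pole $e\in\sph$ and let $\exp_e:\RR^d\cong T_e\sph\to\sph$ be the exponential map. For $\t\in(0,1)$ small, put $y_i^\t:=\exp_e(\t x_i)\in\sph$, and let $\rho$ denote the geodesic distance on $\sph$. By hypothesis $g(\t^{-1}\cdot)$ is positive semi-definite on $\sph$, so
$$\sum_{i,j}c_ic_j\, g\Bigl(\t^{-1}\rho(y_i^\t,y_j^\t)\Bigr)\ge 0\qquad\text{for every }\t\in(0,1).$$

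The key step is the metric comparison
$$\rho\bigl(\exp_e(\t x),\exp_e(\t y)\bigr)=\t|x-y|+O(\t^3)\quad (\t\to0),$$
uniformly for $x,y$ in the bounded set $\mathbf X_N$. This holds because $\exp_e$ is a diffeomorphism near $0$ whose differential at $0$ is an isometry, and the metric pulled back to normal coordinates is $\delta_{ij}+O(|u|^2)$. More concretely, we can write $y=(\sin(\t|x|)\,x/|x|,\cos(\t|x|))$, compute $\cos\rho=\langle y_i^\t,y_j^\t\rangle$, and Taylor expand. Hence $\t^{-1}\rho(y_i^\t,y_j^\t)\to|x_i-x_j|$ for each pair $i,j$.

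Since $g$ is continuous on $[0,\infty)$, each entry $g(\t^{-1}\rho(y_i^\t,y_j^\t))$ converges to $g(|x_i-x_j|)$. The quadratic form is a finite sum, so letting $\t\to0^+$ in the inequality above gives the desired inequality; a limit of nonnegative numbers is nonnegative. The compact support of $g$ on $[0,\pi]$ is needed only so that $g$ is defined on $[0,\infty)$ consistently with the sphere setting. The values $\t^{-1}\rho\le \t^{-1}\pi$ always lie in the domain of $g$, so there is no issue with rescaled distances exceeding $\pi$.

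The only step needing care is the uniform first-order comparison of rescaled geodesic and Euclidean distances. I expect it to be straightforward from the explicit formula for $\langle y_i^\t,y_j^\t\rangle$. This also explains why the theorem gives only semi-definiteness: strict inequalities are lost in the limit. Note that the argument works for every $d>1$ (indeed $d\ge1$), consistent with the removal of the parity restriction.
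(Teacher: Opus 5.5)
Your argument is correct, and it takes a genuinely different route from the paper's.

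\textbf{The paper's route.} The paper works on the coefficient side. For fixed $x>0$ it sets $\theta_n=x/n$ and uses Schoenberg's theorem to get
$$n^{2\lambda+1}\int_0^{\theta_n\pi} g(\theta_n^{-1}t)\,R_n^\lambda(\cos t)(\sin t)^{2\lambda}\,dt\ge 0 .$$
It then rescales $t\mapsto t/n$ and lets $n\to\infty$, using the Jacobi--Bessel asymptotics of Proposition~\ref{lem3.1}. This gives $c_\lambda x^{2\lambda+1}\widehat{g_d}(\xi)\ge 0$ for $|\xi|=x$, and Bochner's theorem finishes the proof.

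\textbf{Your route.} You work directly with the definition. You embed a finite configuration into a shrinking geodesic ball via $\exp_e(\theta\,\cdot)$ and show $\theta^{-1}\rho(y_i^\theta,y_j^\theta)\to|x_i-x_j|$. Your spherical law of cosines expansion $1-\cos\rho=\tfrac{\theta^2}{2}|x_i-x_j|^2+O(\theta^4)$ settles this at once, once $\theta$ is small enough that $\rho\le\theta(|x_i|+|x_j|)$ keeps $\rho/2$ where $\sin$ is invertible. Continuity of $g$ at finitely many points then finishes the proof.

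\textbf{What your route buys.} It is more elementary and more general. It needs neither Schoenberg's nor Bochner's theorem, no Gegenbauer asymptotics, and no interchange of limit and integral. It also needs the hypothesis only along some sequence $\theta_k\to0^+$. Finally, it does not use the compact support of $g$ at all, so your remark that support in $[0,\pi]$ is ``needed'' for consistency is unnecessary. Since $g$ is already defined on $[0,\infty)$, your argument proves the statement without that assumption.

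\textbf{What the paper's route buys.} It exhibits the quantitative link between rescaled Gegenbauer coefficients on $\mathbb{S}^d$ and the Hankel transform $\widehat{g_d}$ on $\mathbb{R}^d$. This is the same mechanism the paper uses in Corollary~\ref{cor1.5}, and in the reverse direction in Case (iii) of Theorem~\ref{thm1.3}. There, compact support is what makes $g_d\in L^1$ and justifies passing to the limit under the integral.
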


\begin{rem}	 In the published version, the statement of Theorem~\ref{thm1.4} should be corrected to require that
$g(\theta^{-1}\cdot)$ is isotropic positive semi-definite on $\mathbb{S}^d$ for all $\theta\in(0,1)$, as stated above. The proof in the published paper proceeds under this condition and therefore remains valid.
\end{rem}

Next, we consider the conjecture in the even dimensional case.
In this situation, the
approach of Theorem \ref{thm1.3} does not seem to be extendable due to
the fact that there is no such an extensional relation Lemma \ref{lem2.3} in even dimensional cases.
Our work will focus
  on the case $d=2$, and a restriction on the parameter $\t$.  Then we shall partly prove the conjecture.  
Our main result in this case can be stated as follow:
\begin{thm}
	Let $d=2$ and let $\delta\geq\f {d+1}2$. There exists a  constant $c$ such that the function
	$$f_{\t,\delta}(t)=(\t-t)_+^\d$$    is isotropic positive definite on $\mathbb{S}^2$ whenever $0\leq\t\leq c$,
	where $c$ is an absolute constant $0 < c < \pi$.
 
\end{thm}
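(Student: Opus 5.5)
The plan is to reduce positivity on $\mathbb S^2$ to the known positivity of the Euclidean Hankel transform of $(1-s)_+^\delta$ in $\RR^2$, using a Hilb-type asymptotic for Legendre polynomials and a smallness parameter $\t$.

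For $d=2$ we have $\l=\f12$ and $C_n^{1/2}=P_n$. By the criterion of \cite{CMS} recalled in the introduction, it suffices to show
$$a_n(\t)=\int_0^\t(\t-t)^\delta P_n(\cos t)\sin t\,dt>0\qquad\text{for all } n\in\NN_0 .$$
(Summability of $\sum a_n P_n(1)$ follows from continuity and positivity of all $a_n$.)

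\emph{Step 1: Legendre-to-Bessel reduction.} Write $\nu=n+\f12$. Use the uniform expansion of Szeg\H{o}, Frenzen and Wong:
$$P_n(\cos t)=\Bl(\f t{\sin t}\Br)^{1/2}\Bl[J_0(\nu t)+\f{b(t)}{\nu}\,tJ_1(\nu t)\Br]+R_n(t),$$
valid for $0\le t\le \t<\pi$. Here $b$ is smooth with $b(t)=O(t)$, and $|R_n(t)|\le C t^{2}\min(1,(\nu t)^{-3/2})\,\nu^{-2}$; I would take enough terms that the remainder is $O(\nu^{-2})$ in this weighted sense.

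Substitute $t=\t s$ and set $x=\nu\t$. The leading term becomes
$$\t^{\delta+2}\int_0^1(1-s)^\delta\,w(\t s)\,J_0(xs)\,s\,ds,\qquad w(t)=\Bl(\f{\sin t}{t}\Br)^{1/2}.$$
Since $w$ is even and smooth, $w(\t s)=1+\t^2 s^2 h(\t s)$ with $h$ smooth and bounded in $C^k$ uniformly for $\t\le\pi/2$.

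\emph{Step 2: Euclidean lower bound.} Let $F_\delta(x)=\int_0^1(1-s)^\delta J_0(xs)s\,ds$, the radial Fourier transform of $(1-|y|)_+^\delta$ on $\RR^2$. By the Askey--Gasper positivity of Bessel integrals \cite{Gasper}, $F_\delta(x)>0$ for $\delta\ge\f32$. I will upgrade this to a quantitative bound
$$F_\delta(x)\ge c_\delta(1+x)^{-3}.$$
The asymptotics are as follows. The singularity of the radial extension at the origin (a $|y|$-type term) contributes $\delta x^{-3}$ up to a positive constant. The endpoint $s=1$ contributes an oscillatory $O(x^{-\delta-3/2})$. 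For $\delta>\f32$ the first term dominates. For $\delta=\f32$ the two are of the same order, and I would use the explicit closed form (a Lommel / ${}_1F_2$ representation) to get a positive lower bound for large $x$. Positivity on compact $x$-ranges then gives the uniform bound.

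\emph{Step 3: perturbation errors.} Integrating by parts three times, using $\f{d}{ds}(sJ_1(xs))=xsJ_0(xs)$, shows that
$$\int_0^1(1-s)^\delta s^2h(\t s)J_0(xs)s\,ds=O\bigl((1+x)^{-3}\bigr)$$
uniformly in $\t$. The same singularity structure appears at $s=0$ and $s=1$, with $\delta\ge\f32$ allowing three integrations. Hence this correction is $O(\t^2)\,\t^{\delta+2}(1+x)^{-3}$. The $\nu^{-1}b(t)tJ_1$ term carries an extra factor $\t^2$ after scaling, since $b(\t s)/\nu=O(\t s\cdot\t/x)$, and is handled the same way. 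The remainder $R_n$ contributes at most $C\t^{\delta+2}\cdot\t^{2}(1+x)^{-3}$ after the same scaling bookkeeping; this needs care for $x\lesssim1$, where one uses $|R_n|\le Ct^2\nu^{-2}$ directly.

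Combining the three steps gives
$$a_n(\t)\ge \t^{\delta+2}(1+x)^{-3}\bigl(c_\delta-C\t^2\bigr),$$
which is positive for $\t\le c$ with $c$ absolute. Since $\delta$ ranges over $[\f32,\infty)$, I would also check that $c_\delta$ and $C$ can be chosen uniformly in $\delta$; alternatively, for large $\delta$, reduce to $\delta=\f32$ by a positive convolution in $\delta$ (Williamson-type).

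\emph{Main obstacle.} I expect the main difficulty to be the remainder control in Step 3: the uniform asymptotic must be precise enough that all errors decay like $x^{-3}$ and not merely like $x^{-3/2}$ or $x^{-2}$. This forces the use of at least the second term of the Hilb/Szeg\H{o} expansion together with a sharp remainder estimate. The second difficulty is the explicit constant $c_\delta$ at the borderline $\delta=\f32$, where the non-oscillatory and oscillatory terms of $F_\delta$ are of the same order.
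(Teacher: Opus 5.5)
Your route is viable after one repair, and it differs from the paper's. Both arguments use the same Beta-integral reduction to $\delta=\frac32$, which you list as a fallback. The paper then splits into three regimes:
\begin{itemize}
\item $n\theta\le B$, handled by Bernstein's inequality;
\item $B\le n\theta\le A$, handled by the one-term Hilb approximation with error $0.1711/n$ plus the Fields--Ismail lower bound for $\int_0^1(1-x)^{3/2}j_0(ux)x\,dx$;
\item $n\theta\ge A$, where it never passes to the Euclidean transform. Repeated integration by parts with Jacobi identities gives $I_{n,1}+R_{n,1}+R_{n,2}-R_{n,3}$. Positivity comes from the balance, in its normalization, of $\frac32$ against $\sqrt2\,\sin(N\theta)(\sin\theta/\theta)^{1/2}$, using only $\theta\le\pi/2$.
\end{itemize}
Smallness of $\theta$ is needed only in the middle range. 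There an error $O(\theta/B)$ must beat a lower bound of order $A^{-3}$ with $A\approx 3.7\times 10^{6}$, which is why $c\approx 10^{-21}$.

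You instead propose one uniform comparison $a_n(\theta)=\theta^{\delta+2}\bigl[F_{3/2}(x)+O(\theta^2(1+x)^{-3})\bigr]$ and let the Euclidean function carry all the positivity. For large $x$, $F_{3/2}(x)\approx x^{-3}\bigl(\frac32-\frac{3\sqrt2}{4}\sin x\bigr)$. On the sphere the oscillatory endpoint term only picks up the harmless factor $(\sin\theta/\theta)^{1/2}<1$. The bound $F_{3/2}(x)\ge c(1+x)^{-3}$ needed in your Step 2 follows from the same Fields--Ismail estimates the paper cites. Your approach is more unified and should give a far less degenerate $c$. The cost is a multi-term uniform Hilb/Olver expansion with remainder bounds uniform in $n$, which is heavier machinery than the paper uses. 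Alternatively, you can dispose of $x\le B$ by the paper's Bernstein argument.

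The step that needs repair is your remainder bound. For the expansion you display, the remainder does not satisfy $|R_n(t)|\le Ct^2\min(1,(\nu t)^{-3/2})\nu^{-2}$.
\begin{itemize}
\item Since $u=(\sin t)^{1/2}P_n(\cos t)$ solves $u''+\bigl(\nu^2+\frac{1}{4\sin^2t}\bigr)u=0$, Olver's recursion gives a next term $\nu^{-2}A_1(t)J_0(\nu t)$. Here $A_1(t)=-\frac{7}{1920}t^2+O(t^4)\not\equiv 0$, so this term has size $t^2\nu^{-2}(\nu t)^{-1/2}$.
\item Bounded in absolute value, it only yields an error $\theta^{\delta+2}\,\theta^4x^{-5/2}$. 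That stays below $c\,\theta^{\delta+2}x^{-3}$ only for $x\lesssim\theta^{-8}$, while $x=\nu\theta$ is unbounded.
\item So keep this term explicitly and treat it like the others. After scaling it equals $\theta^{\delta+2}\theta^4x^{-2}$ times an integral that is $O((1+x)^{-3})$ by your Step 3 argument. Because $x\ge\theta/2$, this is $O(\theta^2)\,\theta^{\delta+2}(1+x)^{-3}$.
\item With $J_0(\nu t)\bigl(1+\nu^{-2}A_1(t)\bigr)-\frac{1}{8\nu}\bigl(\frac1t-\cot t\bigr)J_1(\nu t)$ as the approximant, the remainder is of the order of the first omitted term $\nu^{-3}tB_1(t)J_1(\nu t)$. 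That is exactly the bound you wrote.
\end{itemize}
A smaller point: in your notation $t\,b(t)=-\frac18\bigl(\frac1t-\cot t\bigr)$, so $b(0)=-\frac1{24}$ rather than $b=O(t)$. The extra factor $\theta^2$ still appears, as $\theta^2/x$ times an integral of size $O(\min(x,x^{-3}))$.
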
 To the best of our kownledge, there is no  result for  even dimensional cases.  We will present a detailed discussion of the proof in Section   \ref{section5}.

The paper is organized as follows:  Section 2 contains preliminary results. We will prove an extensional relation about the Jacobi polynomials in Lemma \ref{lem2.3} and present a  known asymptotic relationship between Jacobi polynomial and  Bessel function in Lemma \ref{lem3.1}. They are most often used in our proof. In Section 3, we present the proof of our main result Theorem \ref{thm1.3}, which can be done in three steps. In particular, in step 2, we divide the estimation into three cases. The proof of the  Corollary \ref{cor1.5} and Theorem \ref{thm1.4} are given in Section 4. Section 5 is devoted to the problem of Conjecture \ref{conj1.9}. We will give the proof  when   $d=2$ and $\t$ is small enough. The proof is rather involved, and we break it into several steps. In Appendix A and B, we will provide some useful formulas in the proof of  Section 5.

 \section{Preliminaries and Lemmas}
 
 In this section, we first prove an extensional relation Lemma \ref{lem2.3} on the Gegenbauer polynomials that will be useful in the proof of Theorem \ref{thm1.3}. We also   provide the asymptotic relation formula between Jacobi polynomials and Bessel functions that  we will use   several times in the sequel. Finally, we will recall the  fact that the Fourier transform of  isotropic functions can be represented by Bessel functions.
 
 Let us start with some  notations. We use the standard notation of $P_n^{(\a,\b)}(x)$  for the Jacobi polynomials  where parameters $\a,\b\in\mathbb{R}$ and $n\in\mathbb{N}$, and use   $C_n^\l(x)$ for the Gegenbauer polynomials of degree $n$. The Gegenbauer polynomial  is a special case of the Jacobi  polynomial with the well-known relation $$C_n^\l(x)= \frac{\Gamma(\l+\f12)\Gamma(n+2\l)}{\Gamma(2\l)\Gamma(n+\l+\f12)}P_n^{(\l-\f12,\l-\f12)}(x), \ \ \  \l>-\f12.$$  For later applications, we denote $R_n^\ld(x):= \f {C_n^\ld(x)}{C_n^\ld (1)}   $ as  normalized Gegenbauer polynomials.

We first recall the following two results on Jacobi polynomials.
 \begin{lem}The following formula holds: \cite[(4.5.4)]{Sz}
  \begin{equation*}\label{eqn 5.6}
   P_n^{(\a ,\b+1)}(x)=\f{2}{2n+\a+\b+2}\frac{(n+\b+1)P_n^{(\a ,\b)}(x)+(n+1) P_{n+1}^{(\a ,\b)}(x)}{1+x}.
  \end{equation*}
 \end{lem}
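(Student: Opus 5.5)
We prove the identity
$$(1+x)P_n^{(\a,\b+1)}(x)=\f{2}{2n+\a+\b+2}\Bl[(n+\b+1)P_n^{(\a,\b)}(x)+(n+1)P_{n+1}^{(\a,\b)}(x)\Br],$$
which is equivalent to the statement.

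The plan is a Christoffel-type argument based on orthogonality, combined with a comparison of leading coefficients and values at $x=1$. Set $Q(x):=(n+\b+1)P_n^{(\a,\b)}(x)+(n+1)P_{n+1}^{(\a,\b)}(x)$, a polynomial of degree $n+1$. The first step is to show that $Q(-1)=0$. For this I will use the standard value
$$P_m^{(\a,\b)}(-1)=(-1)^m\binom{m+\b}{m}=(-1)^m\f{\Gamma(m+\b+1)}{\Gamma(\b+1)\,m!}.$$
Then
$$Q(-1)=(-1)^n\f{\Gamma(n+\b+1)}{\Gamma(\b+1)n!}\Bl[(n+\b+1)-(n+1)\f{n+\b+1}{n+1}\Br]=0.$$
Hence $Q(x)=(1+x)R(x)$ with $R$ a polynomial of degree $n$.

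The second step, which is the key point, is to show that $R$ is orthogonal to all polynomials $p$ of degree $<n$ with respect to the weight $(1-x)^\a(1+x)^{\b+1}$. Indeed,
$$\int_{-1}^1 R(x)p(x)(1-x)^\a(1+x)^{\b+1}dx=\int_{-1}^1 Q(x)p(x)(1-x)^\a(1+x)^{\b}dx,$$
and the right side vanishes because $\deg p<n$ and $Q$ is a combination of $P_n^{(\a,\b)}$ and $P_{n+1}^{(\a,\b)}$, both orthogonal to lower-degree polynomials under the weight $(1-x)^\a(1+x)^\b$. I assume $\a,\b>-1$ so that the weights are integrable; the general case follows by polynomial identity in the parameters, since both sides are polynomial in $\a,\b$ for fixed $n$. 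Therefore $R=c\,P_n^{(\a,\b+1)}$ for some constant $c$.

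The last step determines $c$ by evaluating at $x=1$, using $P_m^{(\a,\b)}(1)=\binom{m+\a}{m}$. This gives
$$2c\binom{n+\a}{n}=(n+\b+1)\binom{n+\a}{n}+(n+1)\binom{n+1+\a}{n+1}=\binom{n+\a}{n}\bigl[(n+\b+1)+(n+\a+1)\bigr],$$
so $c=\f{2n+\a+\b+2}{2}$. Substituting this value of $c$ yields exactly the claimed formula, with the factor $\f{2}{2n+\a+\b+2}$.

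The main obstacle is bookkeeping rather than substance: one must use the correct endpoint values at $\pm1$ and handle parameter ranges where orthogonality fails. If evaluating at $x=1$ degenerates (for example when $\binom{n+\a}{n}=0$), I would instead compare leading coefficients, using $k_n=2^{-n}\binom{2n+\a+\b}{n}$. Alternatively, one can appeal to analytic continuation in $\a$.
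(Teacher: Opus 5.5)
Your proof is correct: $Q(-1)=0$, the transfer of orthogonality from the weight $(1-x)^{\alpha}(1+x)^{\beta}$ to $(1-x)^{\alpha}(1+x)^{\beta+1}$, and the normalization $c=\frac{2n+\alpha+\beta+2}{2}$ from $x=1$ all check out. Two small remarks: for $\alpha>-1$ one has $\binom{n+\alpha}{n}\neq 0$, so your fallback is never needed, and the continuation in the parameters should be applied to the denominator-free identity $(2n+\alpha+\beta+2)(1+x)P_n^{(\alpha,\beta+1)}(x)=2Q(x)$. The paper gives no argument and simply quotes Szeg\H{o}'s formula (4.5.4); your Christoffel-type proof is the standard derivation of that identity, so it is a faithful self-contained justification of what the paper cites.
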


  \begin{lem}The following formula holds: \cite[(4.1.5)]{Sz}
 	\begin{equation*}\label{lem2.2}
 	P_{2n}^{(\a ,\a)}(x)=\f{\Gamma(2n+\a+1)\Gamma(n+1)}{\Gamma(n+\a+1)\Gamma(2n+1)}  P_{n }^{(\a ,-\f12)}(2x^2-1).
 	\end{equation*}
 \end{lem}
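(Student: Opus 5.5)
This is the classical quadratic transformation between symmetric and nonsymmetric Jacobi polynomials, and I would prove it by comparing orthogonality. The approach is to show that both sides are polynomials in $x$ of degree $2n$, even in $x$, and orthogonal to all lower-degree polynomials with respect to the weight $(1-x^2)^{\a}$ on $[-1,1]$. Uniqueness of orthogonal polynomials then forces the two sides to agree up to a constant. That constant is fixed by evaluating at $x=1$.

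The key steps, in order, are as follows.

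\emph{Step 1 (parity and degree).} $P_{2n}^{(\a,\a)}$ is even, since $P_m^{(\a,\b)}(-x)=(-1)^mP_m^{(\b,\a)}(x)$. The right side $q(x):=P_n^{(\a,-\f12)}(2x^2-1)$ is visibly even and has degree exactly $2n$.

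\emph{Step 2 (orthogonality of $q$).} It suffices to check orthogonality against $x^{2k}$ for $k<n$. Odd monomials pair to zero by parity.

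Substitute $y=2x^2-1$ on $[0,1]$, so $x=\sqrt{(1+y)/2}$. Then
$$1-x^2=\tfrac{1-y}{2},\qquad dx=\tfrac{dy}{2\sqrt2\,(1+y)^{1/2}}.$$
This gives
$$\int_{-1}^1 q(x)x^{2k}(1-x^2)^\a\,dx = c\int_{-1}^1 P_n^{(\a,-\f12)}(y)\,(1+y)^k(1-y)^\a(1+y)^{-1/2}\,dy .$$
Since $(1+y)^k$ has degree $k<n$, this integral vanishes by the orthogonality of $P_n^{(\a,-\f12)}$ with respect to the weight $(1-y)^\a(1+y)^{-1/2}$.

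This needs $\a>-1$ for integrability. The identity then extends to all $\a$ because both sides are polynomials in $\a$ after clearing the Gamma factors, or one simply notes it is a polynomial identity in $\a$.

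\emph{Step 3 (uniqueness).} The space of polynomials of degree $\le 2n$ orthogonal to all polynomials of degree $<2n$ under $(1-x^2)^\a$ is one-dimensional. Hence $P_{2n}^{(\a,\a)}=C\,q$ for some constant $C$.

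\emph{Step 4 (constant).} Evaluate at $x=1$, which gives $y=1$, using $P_m^{(\a,\b)}(1)=\binom{m+\a}{m}=\f{\Gamma(m+\a+1)}{\Gamma(\a+1)\Gamma(m+1)}$. This yields
$$C=\f{\Gamma(2n+\a+1)}{\Gamma(2n+1)}\cdot\f{\Gamma(n+1)}{\Gamma(n+\a+1)},$$
which is exactly the stated factor.

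No step is a genuine obstacle. The only care needed is in Step 2, specifically the Jacobian bookkeeping in the substitution, and in the range of $\a$, handled by polynomial continuation in $\a$. Alternatively, one could cite Szeg\H{o} directly, as the paper does.
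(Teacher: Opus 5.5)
Your proof is correct. The substitution $y=2x^2-1$ turns $x^{2k}(1-x^2)^{\a}\,dx$ on $[0,1]$ into a constant multiple of $(1+y)^{k-\f12}(1-y)^{\a}\,dy$, so orthogonality of $P_n^{(\a,-\f12)}$ gives orthogonality of $q$. Uniqueness then gives proportionality, and evaluation at $x=1$ fixes the constant; this step is legitimate because $q(1)=\binom{n+\a}{n}\neq 0$ for $\a>-1$.

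The paper gives no proof of its own and simply quotes Szeg\H{o}'s (4.1.5). Your orthogonality-plus-normalization argument is the standard derivation of that quadratic transformation, so you have supplied the argument the citation outsources. The continuation in $\a$ is harmless but not needed for the paper's purposes, since the formula is only applied with parameter $\a-\f12>-1$.
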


By using the above two  formulas, we can establish an explicit expression for $R_n^\ld (\cos t) (\cos \f t2)^{2\ld}$. That is, we can write $R_n^\ld (\cos t) (\cos \f t2)^{2\ld}$  in terms of a linear combination of the terms $R_{2j}^{\ld}(\cos \f t2)$, with $j=n,...,n+\l$, and more importantly, all the coefficients are nonnegative.  This relation plays an
important role in our proof.

\begin{lem}\label{lem2.3} If $\ld$ is a positive integer, then  there exists a sequence $\{a_{n,j}^\ld \}_{j=n}^{2n+2\ld}$ of positive numbers such that  for any $t \in [0,\pi]$,
	\begin{equation}\label{4.2}
	R_n^\ld (\cos t) (\cos \f t2)^{2\ld} =\sum_{j=n}^{n+\ld} a_{n,j} ^\ld R_{2j}^{\ld}(\cos \f t2).
	\end{equation}
\end{lem}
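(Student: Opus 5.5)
The plan is to reduce \eqref{4.2} to an identity among Jacobi polynomials in the single variable $x=\cos t$, and then obtain it by applying the first lemma above (\cite[(4.5.4)]{Sz}) $\ld$ times. Put $\a=\ld-\f12$.

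\emph{Rewriting both sides.} By the relation between $C_n^\ld$ and $P_n^{(\a,\a)}$, the left side of \eqref{4.2} is a positive constant times
$$P_n^{(\a,\a)}(x)\Bigl(\f{1+x}2\Bigr)^{\ld},$$
since $(\cos\f t2)^{2\ld}=\bigl(\f{1+\cos t}2\bigr)^{\ld}$ and $C_n^\ld(1)>0$.

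For the right side, each $R_{2j}^\ld(\cos\f t2)$ is a positive multiple of $P_{2j}^{(\a,\a)}(\cos\f t2)$. By the quadratic transformation \cite[(4.1.5)]{Sz}, this is a positive multiple of $P_j^{(\a,-\f12)}(2\cos^2\f t2-1)=P_j^{(\a,-\f12)}(x)$. The constant there is a ratio of Gamma functions with positive arguments, since $\a>-1$. So it suffices to show
$$(1+x)^\ld P_n^{(\a,\a)}(x)=\sum_{j=n}^{n+\ld} b_{n,j}\,P_j^{(\a,-\f12)}(x),\qquad b_{n,j}>0.$$
Multiplying by the positive normalizing constants then gives the $a^\ld_{n,j}$. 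The index range in the statement should read $j=n,\dots,n+\ld$, as in \eqref{4.2}.

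\emph{Lowering the $\b$-parameter.} Rearranged, \cite[(4.5.4)]{Sz} reads
$$(1+x)P_m^{(\a,\b+1)}(x)=\f{2}{2m+\a+\b+2}\Bigl[(m+\b+1)P_m^{(\a,\b)}(x)+(m+1)P_{m+1}^{(\a,\b)}(x)\Bigr].$$
This lowers $\b$ by one and raises the degree range by at most one, with nonnegative coefficients. We start from $\b+1=\a=\ld-\f12$ and apply this identity successively with $\b=\ld-\f32,\ld-\f52,\dots,-\f12$. Since $\ld$ is an integer, this is exactly $\ld$ steps, and it ends precisely at $\b=-\f12$; this is where integrality of $\ld$ is used. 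Each step consumes one factor of $(1+x)$.

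By induction on the number of steps $k$, one gets
$$(1+x)^kP_n^{(\a,\a)}=\sum_{m=n}^{n+k}b^{(k)}_{m}P_m^{(\a,\a-k)}$$
with all $b^{(k)}_m>0$. The extreme coefficients are products of the factors $(m+\b+1)$ and $(m+1)$ respectively, and the interior ones are sums of positive terms.

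\emph{Positivity, the only point needing care.} Throughout, $\b\ge-\f12$ and $m\ge n\ge0$, so $m+\b+1\ge\f12>0$ and $m+1>0$. Also $2m+\a+\b+2\ge \ld-\f12-\f12+2>0$. Hence every coefficient at every stage is strictly positive, and after $k=\ld$ steps we obtain the required expansion with $b_{n,j}=b^{(\ld)}_j>0$.

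I expect no real obstacle beyond this bookkeeping. The one thing to double-check is that the quadratic-transformation constant and the Gegenbauer–Jacobi normalization constants are positive for $\ld\ge1$, which is immediate from the Gamma-function expressions.
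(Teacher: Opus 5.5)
Your proposal is correct and follows the same route as the paper. You apply the $(\a,\b+1)\to(\a,\b)$ relation \cite[(4.5.4)]{Sz} $\ld$ times to lower the second parameter from $\ld-\f12$ to $-\f12$; the paper's \eqref{4.3} is this step, written via $1+\cos t=2\cos^2\f t2$. You then convert each $P_j^{(\ld-\f12,-\f12)}(\cos t)$ into a positive multiple of $R_{2j}^\ld(\cos\f t2)$ by the quadratic transformation, as in \eqref{eqn2.6}. Your explicit check that every coefficient is strictly positive is a small improvement, since the paper only records the intermediate coefficients as nonnegative. Your correction of the index range to $j=n,\dots,n+\ld$ is also right.
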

\begin{proof}
	First, applying the Lemma \ref{eqn 5.6}, substituting $x$ by $\cos(\f t2)$, $\a$ by $\a-\f 12$, and $\b$ by $\b-\f 12$, we get for $t\in [0,\pi]$
	\begin{align}\label{4.3}
		 (\cos \f t2)^2 P_n^{(\al-\f12, \be+\f12)}(\cos t)  
  = A_n^{\al,\be} P_n^{(\al-\f12, \be-\f12)}(\cos t ) + B_n^{\al,\be} P_{n+1}^{(\al-\f12, \be-\f12)}(\cos t), \end{align}  where
	$$ A_n^{\al,\be} =\f {n+\be+\f12} {2n+\al+\be+1},\    \  B_n^{\al,\be} =\f {n+1} {2n+\al+\be+1}.$$
	
	Thus,
	using \eqref{4.3} $\ld$ times, we obtain
	\begin{align}\label{eqn2.5}
	C_n^{\ld} (\cos t) (\cos \f t2)^{2\ld}
	=\sum_{j=n}^{n+\ld} \ga_{j} P_j^{(\ld-\f12, -\f12)}(\cos t),
	\end{align}
	where for all  
	$n\leq j\leq n+\lambda$, $\gamma_{j}$ are some nonnegative coefficients.	Next, to complete the proof, we will use the Lemma \ref{lem2.2} with substituting $x$ by $\cos(\f t2)$, $\a$ by $\a-\f 12$, and get for $t \in [0,\pi]$ and $\al>-\f12$,
	\begin{equation}\label{eqn2.6}
		P_n^{(\al-\f12, -\f12)}(\cos t) = \frac{\Gamma(n+\a+\f12)\Gamma(2n+1)}{\Gamma(2n+\a+\f12)\Gamma(n+1)}  P_{2n}^{(\al-\f12, \al-\f12)} (\cos \f t2).\end{equation}
	Combining the equations \eqref{eqn2.5} and \eqref{eqn2.6},   we obtain the desired result \eqref{4.2}.
\end{proof}
 
  The following well-known relation connecting Jacobi polynomial and   Bessel function is crucial for our proof (See \cite{Sz}, \cite{FW}).
  \begin{prop}\label{lem3.1} 
  	For $\al>-\f12$, we have  for  $t\in (0,\pi)$,
  	\begin{align*} 
  		\f{P_n^{(\al,\al)}(\cos t)}{P_n^{(\al,\al)}(1)} & =2^\al\Ga(\a+1) \Bl(\f t {\sin t}\Br)^{\al+\f12}\Bl[  j_\a(Nt)+  O(n^{-1})\Br],
  	\end{align*}
  	where $N=n+\a+\f12$,    $j_\al (z)=z^{-\al} J_\al(z)$, and here and in what follows $J_\al(z)$ is the Bessel function of the first kind, $$J_\al(z)=\sum_{v=0}^\infty\frac{(-1)^v(\f z2)^{\a+2v}}{v!{\Gamma(v+\a+1)}}.$$ The $O$-term is uniform with respect to $t\in [0, \pi-\va]$, $\va$ being an arbitrary positive number.
  \end{prop}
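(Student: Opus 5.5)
The plan is to obtain the statement as a specialization of the classical Hilb-type asymptotic formula for Jacobi polynomials (Szeg\H{o}, Theorem 8.21.12), followed by a renormalization. That formula states that for $\a,\b>-1$, with $N=n+\f{\a+\b+1}2$,
\begin{equation*}
\Bl(\sin\f t2\Br)^{\a}\Bl(\cos\f t2\Br)^{\b}P_n^{(\a,\b)}(\cos t)=N^{-\a}\f{\Ga(n+\a+1)}{n!}\Bl(\f t{\sin t}\Br)^{\f12}J_\a(Nt)+E_n(t).
\end{equation*}
The error satisfies $E_n(t)=t^{\f12}O(n^{-\f32})$ for $c/n\le t\le \pi-\va$ and $E_n(t)=t^{\a+2}O(n^{\a})$ for $0<t\le c/n$. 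I would take this formula as the starting point rather than re-deriving it. Its own proof goes through the second-order differential equation satisfied by $u(t)=(\sin\f t2)^{\a+\f12}(\cos\f t2)^{\b+\f12}P_n^{(\a,\b)}(\cos t)$, compared with the Bessel equation via a Volterra integral equation (Sturm comparison).

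\textbf{Step 1 (specialize).} Set $\b=\a$, so that $N=n+\a+\f12$ and $(\sin\f t2)^\a(\cos\f t2)^\a=2^{-\a}(\sin t)^\a$. Divide by $P_n^{(\a,\a)}(1)=\f{\Ga(n+\a+1)}{\Ga(\a+1)\,n!}$.

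\textbf{Step 2 (main term).} Substitute $N^{-\a}J_\a(Nt)=t^{\a}j_\a(Nt)$. The main term then becomes
\begin{equation*}
2^\a\Ga(\a+1)(\sin t)^{-\a}\Bl(\f t{\sin t}\Br)^{\f12}t^\a j_\a(Nt)=2^\a\Ga(\a+1)\Bl(\f t{\sin t}\Br)^{\a+\f12}j_\a(Nt),
\end{equation*}
which is exactly the asserted leading term.

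\textbf{Step 3 (error).} The normalized error is
\begin{equation*}
\f{2^{\a}\,\Ga(\a+1)\,n!}{\Ga(n+\a+1)}(\sin t)^{-\a}E_n(t)\asymp n^{-\a}(\sin t)^{-\a}E_n(t).
\end{equation*}
We must show that it equals $2^\a\Ga(\a+1)(t/\sin t)^{\a+\f12}\cdot O(n^{-1})$. Since $t\in[0,\pi-\va]$, the factor $(t/\sin t)^{\a+\f12}$ is bounded above and below by constants depending only on $\va$, and likewise $\sin t\asymp t$. It therefore suffices to show $n^{-\a}t^{-\a}E_n(t)=O(n^{-1})$ uniformly.
\begin{itemize}
\item For $t\le c/n$, this quantity is $O(t^2)=O(n^{-2})$.
\item For $c/n\le t\le\pi-\va$, it is $O(n^{-\f32-\a}t^{\f12-\a})$. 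If $\a\ge\f12$, the worst case is $t=c/n$, giving $O(n^{-2})$. If $-\f12<\a<\f12$, the worst case is $t\asymp1$, giving $O(n^{-\f32-\a})=O(n^{-1})$ because $\a>-\f12$.
\end{itemize}
This gives the uniform $O(n^{-1})$ bound on $[0,\pi-\va]$.

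\textbf{Main obstacle.} The main obstacle is the error bookkeeping in Step 3, specifically the interplay between the two regimes $t\lessgtr c/n$ and the sign of $\f12-\a$. This is where the hypothesis $\a>-\f12$ is genuinely used. A second point to check is that the constants in the Hilb-type formula are uniform on $[0,\pi-\va]$. This fails near $t=\pi$, where $\cos\f t2$ vanishes, and that failure is why the exclusion of $\pi$ (through $\va$) is needed.
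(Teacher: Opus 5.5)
Your proposal is correct and follows the paper's route. The paper gives no separate proof; it cites Szeg\H{o}'s Theorem 8.21.12 (Hilb's formula), and your specialization $\beta=\alpha$, division by $P_n^{(\alpha,\alpha)}(1)=\Gamma(n+\alpha+1)/(\Gamma(\alpha+1)\,n!)$, use of $N^{-\alpha}J_\alpha(Nt)=t^{\alpha}j_\alpha(Nt)$, and error bookkeeping in the regimes $t\le c/n$ and $c/n\le t\le\pi-\varepsilon$ are all accurate, with $\alpha>-\frac12$ used exactly to get $O(n^{-3/2-\alpha})=O(n^{-1})$ for $t\asymp 1$.
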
 In particular, when $\a=\b=0$, \cite[(4.37), (4.38)]{FW} provides a useful consequence in Legendre polynomial case: for $t\in [0,\f\pi 2]$,

  \begin{align}\label{C.1}	\Bl|{P_n}(\cos t)-\(\f t{\sin t}\)^{\f12}J_0(Nt)\Br|  \leq\f{0.1711}n.  \end{align}

  Finally, let us  recall the Fourier transform $\widehat{f_d}$  of an isotropic  function ${f_d}$ defined on $\mathbb{R}^d$. As we have known in \cite{SW}, it
  turns out to be isotropic  as well. An important fact is that the Fourier transform of isotropic functions can be represented  in terms of the Bessel functions.
  \begin{prop} \label{prop2.5}  
 	For the   isotropic function $f_d\in L^1(\mathbb{R}^d)$,   there exists a one dimension function $f$ defined on $[0,\infty)$ such that ${f_d}(x)=  {f}(|x|)$ for   $x\in \mathbb{R}^d$,  and
 	\begin{align*} 
 		\widehat{f_d}(\xi)=(2\pi)^{\f d2}\int_0^{\infty}f(u) j_{\f{d-2}2} (|\xi|u) u^{d-1} du, \  \ \ \ \forall \xi\in\RR^d.
 	\end{align*}
 
 \end{prop}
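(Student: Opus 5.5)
The plan is to reduce the $d$-dimensional Fourier integral to a one-dimensional one by polar coordinates, and then to identify the angular integral with a Bessel function through Poisson's integral representation. I use the normalization $\widehat{f_d}(\xi)=\int_{\RR^d} f_d(x)e^{-ix\cdot\xi}\,dx$, which is the one that reproduces the constant $(2\pi)^{d/2}$ in the statement.

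\textbf{Step 1 (the profile and the radial reduction).} Since $f_d$ is isotropic, define $f(r):=f_d(re_1)$ for $r\ge 0$. Then $f_d(x)=f(|x|)$, because $f_d$ is constant on spheres about the origin. Write $x=u\omega$ with $u>0$ and $\omega\in\SS^{d-1}$. Because $f_d\in L^1$, we have $\int_0^\infty |f(u)|u^{d-1}\,du<\infty$, so Fubini applies and gives
\[
\widehat{f_d}(\xi)=\int_0^\infty f(u)\,u^{d-1}\,I(u\xi)\,du,\qquad I(\eta):=\int_{\SS^{d-1}}e^{-i\eta\cdot\omega}\,d\sigma(\omega).
\]
Here $d\sigma$ is surface measure on $\SS^{d-1}$.

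\textbf{Step 2 (the spherical integral for $d\ge2$).} The function $I$ is rotation invariant, so it suffices to take $\eta=s e_1$ with $s=|\eta|$. Decompose $\omega=(t,\sqrt{1-t^2}\,\omega')$ with $t\in[-1,1]$ and $\omega'\in\SS^{d-2}$; then $d\sigma(\omega)=(1-t^2)^{\frac{d-3}2}\,dt\,d\sigma'(\omega')$. This gives
\[
I(se_1)=|\SS^{d-2}|\int_{-1}^1 e^{-ist}(1-t^2)^{\frac{d-3}2}\,dt,\qquad |\SS^{d-2}|=\f{2\pi^{\frac{d-1}2}}{\Ga(\frac{d-1}2)}.
\]
The sine part of $e^{-ist}$ integrates to zero by oddness. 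Poisson's formula, valid for $\nu>-\tfrac12$, reads
\[
J_\nu(s)=\f{(s/2)^\nu}{\Ga(\nu+\f12)\Ga(\f12)}\int_{-1}^1\cos(st)\,(1-t^2)^{\nu-\f12}\,dt .
\]
I apply it with $\nu=\f{d-2}2$. The constants then combine, using $\Ga(\frac12)=\sqrt\pi$, to
\[
I(\eta)=(2\pi)^{\frac d2}\,|\eta|^{-\frac{d-2}2}J_{\frac{d-2}2}(|\eta|)=(2\pi)^{\frac d2}\,j_{\frac{d-2}2}(|\eta|).
\]
If one prefers not to quote Poisson's formula, it can be verified directly by expanding $\cos(st)$ in its power series and integrating term by term with Beta integrals. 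The result then matches the series for $J_\nu$ given after Proposition \ref{lem3.1}.

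\textbf{Step 3 (the case $d=1$, and assembling).} For $d=1$ the sphere $\SS^0$ consists of the two points $\pm1$, so $I(s)=2\cos s$. Since $j_{-1/2}(z)=\sqrt{2/\pi}\,\cos z$, this again equals $(2\pi)^{1/2}j_{-1/2}(s)$, and the same formula holds. Substituting $I(u\xi)=(2\pi)^{d/2}j_{\frac{d-2}2}(|\xi|u)$ into Step 1 yields the claimed identity. The integral converges absolutely because $j_\nu$ is bounded on $[0,\infty)$ for $\nu\ge-\tfrac12$.

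The only real obstacle is bookkeeping of the normalizing constants: matching $|\SS^{d-2}|$ and the Beta/Gamma factors from Poisson's formula so that exactly $(2\pi)^{d/2}$ comes out. The rest is routine Fubini and symmetry.
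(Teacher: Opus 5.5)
Your argument is correct and is the standard one. The paper gives no proof of its own but cites Stein--Weiss \cite{SW}, where the identity is obtained essentially by this polar-coordinates reduction followed by Poisson's integral for $J_\nu$, and your constants do combine to $|\SS^{d-2}|\,\Ga(\f{d-1}2)\Ga(\f12)\,2^{\f{d-2}2}=(2\pi)^{\f d2}$, with $d=1$ checked separately as you do.

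You are also right to fix the convention $\wh{f_d}(\xi)=\int_{\RR^d} f_d(x)e^{-ix\cdot\xi}\,dx$. With the $e^{-2\pi i x\cdot\xi}$ convention the paper writes in Step 3, the formula holds only with $|\xi|$ replaced by $2\pi|\xi|$. That discrepancy is harmless for the paper, which only uses positivity of $\wh{f_d}$ on all of $\RR^d$.
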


\section{Proof of Theorem \ref{thm1.3}}
We are now in a position to prove Theorem \ref{thm1.3}, that is,  for an odd integer $d$ and a continuous function $g$ on $[0,\infty)$ with compact support  on $[0,\pi]$, if $g$ is isotropic positive definite on $\RR^d$, so it is on $\sph$. According to  Schoenberg's theorem, we  need to prove that the Gegenbauer coefficients of $g$
\begin{align}\label{claim3.1}
	a_n&=\int_0^\pi g(t)C^\l_n(\cos t)\sin^{2\l}t dt > 0,\quad \forall \, n \in \mathbb{N}_0,
\end{align}
and 
\begin{align}\label{claim3.2}
\sum_{n=0}^{\infty}a_nC_n^{\l}(1)<\infty, \ \ \  where  \ \ \lambda=\frac{d-1}2.
\end{align}

 We first verify the validity of $\eqref{claim3.2}$. The proof follows a standard argument  in \cite{S2}. The series $\sum_{n=0}^{\infty}a_nC_n^\l (\cos t)$ is Abel-summable for every $t\in [0,\pi]$. Hence, for $t=0$, $$\sum_{n=0}^{N}a_n C_n^\l(1)\leq \lim\limits_{r\rightarrow 1^-}\sum_{n=0}^{\infty}a_nC_n^\l (1)r^n<\infty.$$ Thus we have
 $\sum_{n=0}^{\infty}a_nC_n^\l (1)$ converges.\\

We now embark on proving equation  \eqref{claim3.1}, which will be carried out in three steps.

  \subsection{Step 1: Reduction}\label{reduction}
  Let us begin with a series of reductions. In order to prove equation \eqref{claim3.1}, we claim that it is enough to prove a slightly stronger statement: for any $n\in\mathbb{N}_0,$ and $\t\in(0,\pi]$, 
\begin{equation}\label{3.3}
\int_0^{\t} g(\f{t\pi}{\t}) C_n^\ld (\cos t) (\sin t)^{2\ld}\, dt > 0,
\end{equation} 
and it is obvious to see \eqref{claim3.1} is exactly a particular case when $\t=\pi$. We now continue to deal with the claim \eqref{3.3}. Without loss of generality, in \eqref{3.3},  we may replace $f(\f{t}\ta)$ with $g(\f{t\pi}\ta)$, i.e. $f(t)=g(\pi t)$, where $f$ is isotropic continuous positive definite on $\RR^d$ with compact support on $[0,1]$. Hence, showing the claim \eqref{3.3}  is equivalent to show for any $n\in\mathbb{N}_0,$  $\t\in(0,\pi]$,
\begin{equation*}\label{3.4}
 \int_0^\ta  f \Bl(\f t \ta\Br)  C_n^\ld(\cos t) (\sin t)^{2\ld} \, dt >0.\
 \end{equation*}
 For simplicity in the proof, we will deal with the following integral and verify its positivity:

 \begin{equation}\label{3.5}
I_n(\ta):= \ta^{-2\ld-1}\int_0^\ta  f \Bl(\f t \ta\Br)  R_n^\ld(\cos t) (\sin t)^{2\ld} \, dt >0,\
\end{equation}
where $\t\in(0,\pi]$, and $n\in\mathbb{N}_0$.

Thus, by the arguments above, the previous claim \eqref{claim3.1} reduces to show $I_n(\ta)>0$ for all $n\in\mathbb{N}_0$ and every $\ta\in (0, \pi]$. Furthermore, with the help of Lemma \ref{lem2.3}, we would restrict our analysis on the integral \eqref{3.5} on a small range of $\t$. More precisely, we have if $I_n(\ta) >0$, where $\ta\in (0, \va_0)$, and $\va_0$ is a sufficiently small positive parameter, then $I_n(\ta) >0$ for every $\t\in(0,\pi]$. Indeed, this claim follows from the fact that for every $\t\in(0,\pi]$, there exists a $\ta_0\in (0, \va_0)$, where $\va_0$ is a sufficiently small positive parameter, such that   $I_n(\t)$ can be written as a sum of $I_n(\t_0)$ with positive coefficients,  which establishes the desired claim.

Thus, given the whole reduction arguments above, the proof of the claim \eqref{claim3.1} is finally reduced to show the following Lemma:
  \begin{lem}\label{lem3-3}
		 Let $d$ be an odd integer $\geq 3$ and let $f\in C[0,\infty)$ with $\supp( f) \subset [0, 1]$. If $f$ is an isotropic positive definite function on $\RR^d$, then
		\begin{equation}\label{3.6}
	I_n(\ta)= \ta^{-2\ld-1}\int_0^\ta  f \Bl(\f t \ta\Br)  R_n^\ld(\cos t) (\sin t)^{2\ld} \, dt>0\
	\end{equation}
	for all $n\in\mathbb{N}_0$ and every $\ta\in (0, \va_0)$, where $\va_0\in (0,1)$ is a sufficiently small positive parameter.
\end{lem}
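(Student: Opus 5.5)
We will compare $I_n(\ta)$ with the Euclidean Hankel-type integral supplied by Proposition~\ref{prop2.5}. Substituting $t=\ta s$ gives
\begin{equation*}
I_n(\ta)=\int_0^1 f(s)\,R_n^\ld(\cos \ta s)\Bl(\f{\sin \ta s}{\ta}\Br)^{2\ld} ds .
\end{equation*}
The Euclidean positivity we want to exploit is $\widehat{f_d}(\xi)\ge 0$. Since $f$ is positive definite and compactly supported, $\widehat{f_d}$ is a nonnegative, real-analytic, not identically zero function. Hence
\begin{equation*}
F(r):=\int_0^1 f(s)\, j_{\ld-\f12}(rs)\, s^{2\ld}\,ds
\end{equation*}
satisfies $F(r)\ge0$, with zeros only at isolated points. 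Recall $2\ld=d-1$ and $j_{\f{d-2}2}=j_{\ld-\f12}$.

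The key identification is as follows. By Proposition~\ref{lem3.1} with $\a=\ld-\f12$, together with $R_n^\ld=P_n^{(\a,\a)}/P_n^{(\a,\a)}(1)$,
\begin{equation*}
R_n^\ld(\cos\ta s)\Bl(\f{\sin\ta s}{\ta}\Br)^{2\ld}=c_\ld\, s^{2\ld}\Bl(\f{\sin \ta s}{\ta s}\Br)^{\ld-\f12}\bigl[j_{\ld-\f12}(N\ta s)+O(n^{-1})\bigr].
\end{equation*}
This yields
\begin{equation*}
I_n(\ta)=c_\ld F(N\ta)+E_n(\ta),
\end{equation*}
where the error $E_n(\ta)$ comes from two sources: the $O(n^{-1})$ term, and the factor $(\sin\ta s/\ta s)^{\ld-\f12}-1=O(\ta^2)$. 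In the regime where $N\ta$ is bounded, I would not use the asymptotic formula at all. Instead I would expand $R_n^\ld(\cos\ta s)$ directly through its hypergeometric representation in powers of $n^2\ta^2s^2$. This makes it converge to $j_{\ld-\f12}(N\ta s)/j_{\ld-\f12}(0)$ uniformly as $\ta\to0$. The same expansion handles $n$ fixed, including $n=0$ where $I_0>0$ must follow from $\int f s^{2\ld}>0$, i.e.\ $F(0)>0$.

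The main obstacle is the regime $N\ta\to\infty$. There $F(N\ta)$ decays, is only nonnegative, and may vanish, so a crude error $O(n^{-1})+O(\ta^2)$ cannot be absorbed. This is exactly where the reduction in Step~1 and the regularity approach flagged in the remarks come in. First I would assume $f$ is smooth on $[0,1)$ with high-order vanishing at $1$. Integration by parts then shows that both the main term and the error carry the same decay in $N\ta$, so it suffices to control ratios rather than absolute sizes. Even so, the zeros of $F$ remain a genuine problem. To handle them I would use Lemma~\ref{lem2.3}: multiplying by $(\cos\f t2)^{2\ld}$ writes $R_n^\ld(\cos t)$ as a positive combination of $R_{2j}^\ld(\cos\f t2)$, so the integrand at parameter $\ta$ is a positive combination of integrands at parameter $\ta/2$ with modified weights. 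Iterating $k$ times pushes $\ta$ down to $\ta 2^{-k}$, while the indices $2^k n+\cdots$ grow. The danger is that the modified weight $f(s)(\cos\f{\ta s}2)^{-2\ld}$ is not itself positive definite, which is precisely the gap criticized in the remark on Nie and Ma. I would therefore split into three cases: $N\ta$ small, $N\ta$ moderate, and $N\ta$ large. In the large case I would use the Bessel asymptotics $j_\a(z)\sim z^{-\a-\f12}\cos(z-\cdot)$ together with the smoothness of $f$, so that an averaged strictly positive lower bound for $F$ dominates the error.

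Finally, I would remove the regularity assumption. I would convolve $f_d$ radially with a positive definite approximate identity of small support, such as a self-convolution of a bump, renormalized to keep the support inside $[0,1]$ after a dilation. The result is smooth and positive definite. Positivity of $I_n(\ta)$ then passes to the limit, and strictness is preserved because $F\not\equiv0$ keeps a uniform lower bound on the positive combination.
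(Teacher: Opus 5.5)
Your proposal has a genuine gap. Nothing in it produces a pointwise positive lower bound for $I_n(\theta)$ when $n\theta$ is large, and the moderate range is not closed either. The relation $I_n(\theta)=c_\lambda F(N\theta)+E_n(\theta)$ compares $I_n(\theta)$ with $F$ at the single point $N\theta$. So an ``averaged'' lower bound for $F$ says nothing about the sign of $I_n(\theta)$ for a given pair $(n,\theta)$. Likewise, ``both terms carry the same decay'' does not identify any strictly positive leading coefficient. Since you allow $F$ to vanish, the convergence $I_n(\theta)\to c_\lambda F(N\theta)$ for bounded $N\theta$ gives positivity only if $\min_{B\le r\le A}F(r)>0$.

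Your three-case split is the paper's, but each case needs an input you do not supply. The paper first proves the lemma for $f\in C^{\lambda+3}[0,\infty)$ with $f'(0)<0$ and $\widehat{f_d}>0$ everywhere. For $n\theta\ge A$ the positivity comes from the cusp $f'(0)<0$. Since $\lambda$ is an integer ($d$ odd), $\lambda$ integrations by parts using $(R^{\mu-1}_{n+1})'=c\,R^\mu_n$ and $R^\mu_n(\cos t)\to\cos nt$ give $\theta^{2\lambda+1}I_n(\theta)=c_n(\lambda)\int_0^\theta F_\lambda(t)\cos((n+\lambda)t)\,dt$. Here $F_0(t)=f(t/\theta)\sin^{2\lambda}t$ and $F_j=(F_{j-1}/\sin t)'$. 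Three more integrations by parts give
\begin{equation*}
\int_0^\theta F_\lambda(t)\cos(Nt)\,dt=-\frac{F_\lambda'(0)}{N^2}+\frac{1}{N^3}\int_0^\theta F_\lambda'''(t)\sin(Nt)\,dt\ge\frac{1}{N^2\theta}\Bigl(-(d-1)!!\,f'(0)-\frac{C}{N\theta}\Bigr),
\end{equation*}
because $F_\lambda'(0)=(d-1)!!\,\theta^{-1}f'(0)$. This is where the oddness of $d$ is really used. The range $B\le n\theta\le A$ then uses $\min_{B\le|\xi|\le A}\widehat{f_d}(\xi)>0$.

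Your regularization destroys exactly these two properties. Convolving with a self-convolution of a bump makes $\widehat{f^\varepsilon_d}$ a dilate of $\widehat{f_d}$ times $|\widehat{\psi_\varepsilon}|^2$, which may still vanish. A $C^\infty$ radial approximant also has $(f^\varepsilon)'(0)=0$ and a rapidly decaying transform. The paper instead convolves the measure $\mu_f=f_d\chi_{B_{1-\varepsilon}}+M_{f_\varepsilon}m_\varepsilon\delta_0$ (the added mass makes $\widehat{\mu_f}>0$) with $\phi_\varepsilon$. Here $\phi(x)=a_\lambda(1-|x|)_+^{\lambda+6}$ has positive transform and a profile with negative slope at $0$, so $\widehat{f^\varepsilon_d}>0$ and $(f^\varepsilon)'(0)<0$.

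Letting $\varepsilon\to0$ then only yields $I_n(\theta)\ge0$, so strictness needs an argument your last sentence does not supply. That argument is Lemma~\ref{lem2.3}, and your worry about it is unfounded. Since $\sin^{2\lambda}t=2^{2\lambda}\sin^{2\lambda}\frac t2\cos^{2\lambda}\frac t2$, the factor $\cos^{2\lambda}\frac t2$ comes from the Jacobian and the weight $f$ is not modified. In fact $I_n(\theta)=\sum_{j=n}^{n+\lambda}a^\lambda_{n,j}I_{2j}(\theta/2)$ with $a^\lambda_{n,j}>0$ summing to $1$. Iterating and discarding terms already known to be nonnegative gives $I_n(\theta)\ge c_{n,m}I_{2^mn}(2^{-m}\theta)$ with $c_{n,m}>0$. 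For large $m$ the last quantity is close to $c_\lambda F(n\theta)$. Other index paths give $F$ at points $x\theta$ with $x$ dyadic in $[n,n+2\lambda]$, and that is where your isolated-zeros observation becomes useful.
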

Now, we will  give the proof of the Lemma \ref{lem3-3} in the following step 2 and 3.

\subsection{Step 2:}
In this step, we shall prove Lemma \ref{lem3-3} for the functions  with high regularity, that is, under the additional assumptions  that $f\in C^{\ld+3} [0,\infty)$, and $f'(0) <0$. We will divide the proof of \eqref{3.6} into the following three cases: (i) $n\ta \ge A$;  (ii) $n\ta\leq B$; (iii)  $B \leq n\ta \leq  A$, where $A>1$ and $B\in (0,1)$ are certain parameters depending only on $f$.

\subsubsection{Case (i).}

In this case, we shall prove that there exists a constant $A>1$ depending only on $f$ such that \eqref{3.6} holds whenever $n\ta \ge A$.

First, we need a definition.
\begin{defn}\label{def3.2}
	Let  $\ld$ be a positive integer. For $j=1,\cdots,\ld$, we define $$F_{0}(t)= f(\ta^{-1} t)(\sin t)^{2\ld} \ \ and\ \
	 F_{j} (t)=\Bl( \f {F_{j-1} (t)}{\sin t}\Br)'.$$
\end{defn} Clearly,  for $0\leq j\leq \ld$,  $F_{j}\in C^{\ld-j+3}[0,\infty)$,  $F_{j}(t)=0$    for $t\ge \ta$, and $ F_{j}(t)=O(t^{2(\ld-j)})$ as  $t\to 0^+$. The functions $F_j(t), j=1,2,...,\ld,$ have the following decomposition.
\begin{lem}\label{lem-6-4}
	For $0\leq t\leq \t$ and $\ell=1, 2, \cdots, \ld$,
\begin{align}\label{3.7}
F_\ell (t) = \sum_{j=0}^\ell \ta^{-j} f^{(j)} (\ta^{-1} t) \sum_{k=0}^{[\f {\ell-j}2]} \al_{\ell, k}^{(j)} (\sin t)^{2\ld-2\ell+j+2k} (\cos   t)^{\ell-j-2k},
\end{align} where the $\al_{\ell, k}^{(j)}$ are constants,
\begin{align}
\al_{\ell, 0}^{(0)} &=(2\ld-1)(2\ld-3)\cdots (2\ld-2\ell+1),\label{3.8}\\
\al_{\ell+1,0}^{(1)}&=\al_{\ell,0}^{(0)} +2(\ld-\ell) \al_{\ell,0} ^{(1)},\   \  \al_{1,0}^{(1)} =1.\label{3.9}
\end{align}  
In particular, \begin{align}\label{eqn3.9}
F_\ld (t)&=\Bl[ (2\ld-1)!! \cos^\ld t +  \sum_{k=1}^{[\f {\ld}2]} \al_{\ld, k}^{(0)} (\sin t)^{2k} (\cos \ t)^{\ld-2k}  \Br]  f (\ta^{-1} t)\notag\\
&+ \sum_{j=1}^\ld \ta^{-j} f^{(j)} (\ta^{-1} t) \sum_{k=0}^{[\f {\ld-j}2]} \al_{\ld, k}^{(j)} (\sin t)^{j+2k} (\cos t)^{\ld-j-2k}.
\end{align}

\end{lem}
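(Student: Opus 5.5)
Induction on $\ell$, with the structure of the ansatz \eqref{3.7} propagated by the recursion of Definition~\ref{def3.2}.

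\textbf{Base case $\ell=1$.} Write $F_0(t)=f(\ta^{-1}t)(\sin t)^{2\ld}$, so $F_0/\sin t = f(\ta^{-1}t)(\sin t)^{2\ld-1}$. Differentiating gives
\[
F_1(t)=\ta^{-1}f'(\ta^{-1}t)(\sin t)^{2\ld-1}+(2\ld-1)f(\ta^{-1}t)(\sin t)^{2\ld-2}\cos t .
\]
This has the form \eqref{3.7} with $\al^{(0)}_{1,0}=2\ld-1$ and $\al^{(1)}_{1,0}=1$. The exponents match: for $j=1,k=0$ the power of $\sin$ is $2\ld-2+1$ and of $\cos$ is $0$; for $j=0$ they are $2\ld-2$ and $1$.

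\textbf{Inductive step.} Assume \eqref{3.7} for $\ell<\ld$. Divide a generic term $\ta^{-j}f^{(j)}(\ta^{-1}t)(\sin t)^{p}(\cos t)^{q}$ by $\sin t$, where $p=2\ld-2\ell+j+2k$ and $q=\ell-j-2k$. Differentiating $\ta^{-j}f^{(j)}(\ta^{-1}t)(\sin t)^{p-1}(\cos t)^{q}$ produces three pieces:

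\begin{enumerate}[(a)]
\item Derivative on $f$: $\ta^{-(j+1)}f^{(j+1)}(\sin t)^{p-1}(\cos t)^q$. This is the $(j+1,k)$ term at level $\ell+1$, since $2\ld-2(\ell+1)+(j+1)+2k=p-1$ and $\ell+1-(j+1)-2k=q$.
\item Derivative on $\sin$: $(p-1)\ta^{-j}f^{(j)}(\sin t)^{p-2}(\cos t)^{q+1}$. This is the $(j,k)$ term at level $\ell+1$.
\item Derivative on $\cos$: $-q\,\ta^{-j}f^{(j)}(\sin t)^{p}(\cos t)^{q-1}$. This is the $(j,k+1)$ term at level $\ell+1$, since $2\ld-2\ell-2+j+2k+2=p$. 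When $q=0$ this piece vanishes, so $k$ stays within $[\f{\ell+1-j}2]$.
\end{enumerate}

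All exponents remain nonnegative because $\ell<\ld$ gives $p-2\ge 0$ whenever $j+2k\geq0$. Hence \eqref{3.7} holds at level $\ell+1$ with constants defined recursively.

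\textbf{Tracking the two constants.}

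For $\al^{(0)}_{\ell+1,0}$: only piece (b) from $(j,k)=(0,0)$ contributes, with factor $p-1=2\ld-2\ell-1$. So $\al^{(0)}_{\ell+1,0}=(2\ld-2\ell-1)\al^{(0)}_{\ell,0}$, which is \eqref{3.8}.

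For $\al^{(1)}_{\ell+1,0}$: two contributions arise.
\begin{itemize}
\item Piece (a) from $(0,0)$ gives $\al^{(0)}_{\ell,0}$.
\item Piece (b) from $(1,0)$ gives factor $p-1$ with $p=2\ld-2\ell+1$, i.e.\ $2(\ld-\ell)$.
\end{itemize}
Together these give \eqref{3.9}.

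\textbf{Specializing to $\ell=\ld$.} Setting $\ell=\ld$ gives \eqref{eqn3.9}, using $\al^{(0)}_{\ld,0}=(2\ld-1)!!$.

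The only delicate point is the index bookkeeping: checking that each new term lands inside the stated ranges of $j$ and $k$, and that no negative powers of $\sin t$ appear. The latter is guaranteed because the total degree decreases by exactly one per step, starting from $2\ld$.
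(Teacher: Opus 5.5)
Your proof is correct and is the same induction the paper uses. Differentiating $\ta^{-j}f^{(j)}(\ta^{-1}t)(\sin t)^{p-1}(\cos t)^q$ produces terms that land on the indices $(j+1,k)$, $(j,k)$, $(j,k+1)$ at level $\ell+1$, and the recursions for $\alpha^{(0)}_{\ell+1,0}$ and $\alpha^{(1)}_{\ell+1,0}$ are read off exactly as in the paper.

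One small remark: nonnegativity of the $\sin$-exponents comes from your explicit bound $p-2=2(\lambda-\ell-1)+j+2k\ge 0$. Your closing remark that the total degree $2\lambda-\ell$ drops by one per step would not suffice on its own.
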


\begin{proof}
 The proof uses induction by $\ell$.
When $\ell=1$, we have $$F_1(t)=\(\frac{F_0(t)}{\sin t}\)'=\t^{-1}f'(\f t\t)(\sin t)^{2\l-1}+(2\l-1)f(\f t\t)(\sin t)^{2\l-2}\cos t=\text{the right-hand side of \eqref{3.7}}$$ and $\al_{1, 0}^{(0)}=(2\l-1)$, $\al_{1, 0}^{(1)}=1.$

Suppose that the statement holds for
$\ell$.  Applying  the definition \ref{def3.2}, we have
\begin{align}
\begin{split}\label{3.11}
F_{\ell+1} (t)  =&\(\frac{F_\ell(t)}{\sin t}\)'= \sum_{j=0}^\ell  \sum_{k=0}^{[\f {\ell-j}2]}  \Big(  \ta^{-j-1} f^{(j+1)} (\frac{t}{\t}) \al_{\ell, k}^{(j)} (\sin t)^{2\ld-2\ell+j+2k-1} (\cos t)^{\ell-j-2k}\\
&+\ta^{-j} f^{(j)} (\frac{t}{\t}) \al_{\ell, k}^{(j)} ({2\ld-2\ell+j+2k-1})(\sin t)^{2\ld-2\ell+j+2k-2} (\cos  t)^{\ell-j-2k+1}\\
&+\ta^{-j} f^{(j)} (\frac{t}{\t}) \al_{\ell, k}^{(j)} ({j+2k-\ell})(\sin t)^{2\ld-2\ell+j+2k} (\cos  t)^{\ell-j-2k-1} \Big)\\
=&\sum_{j=0}^{\ell-1}  \sum_{k=0}^{[\f {\ell-j}2]}  \Big(   \ta^{-j-1} f^{(j+1)} (\frac{t}{\t}) \al_{\ell, k}^{(j)} (\sin t)^{2\ld-2\ell+j+2k-1} (\cos t)^{\ell-j-2k} \Big)\\
&+ \ta^{-\ell-1} f^{(\ell+1)} (\frac{t}{\t}) \al_{\ell, 0}^{(\ell)} (\sin t)^{2\ld-\ell-1}\\
&+\sum_{j=0}^{\ell}  \sum_{k=0}^{[\f {\ell-j}2]} \Big( \ta^{-j} f^{(j)} (\frac{t}{\t}) \al_{\ell, k}^{(j)} ({2\ld-2\ell+j+2k-1})(\sin t)^{2\ld-2\ell+j+2k-2} (\cos  t)^{\ell-j-2k+1} \Big)\\
&+\sum_{j=0}^{\ell}  \sum_{k=0}^{[\f {\ell-j}2]} \Big( \ta^{-j} f^{(j)} (\frac{t}{\t}) \al_{\ell, k}^{(j)} ({j+2k-\ell})(\sin  t)^{2\ld-2\ell+j+2k} (\cos t)^{\ell-j-2k-1} \Big). \\
\end{split}
\end{align}

Observing the power of each term in \eqref{3.11}, we can exactly rewrite $F_{\ell+1} (t)$ by  a double sum in terms of $\t^{-j}f^{(j)} (\frac{t}{\t})$, $(\sin t)^{2\ld-2\ell+j+2k-2}$, and $(\cos t)^{\ell-j-2k+1}$ as $j$ runs from $0$ to $\ell+1$ and $k$ runs from $0$ to $[\f {\ell+1-j}2]$. It remains to prove the coefficient satisfies \eqref{3.8} and \eqref{3.9}, that is, we need to verify that
\begin{align}
\al_{\ell+1, 0}^{(0)} &=(2\ld-1)(2\ld-3)\cdots (2\ld-2\ell+1)(2\ld-2\ell-1),\label{3.12}\\
\al_{\ell+1,0}^{(1)}&=\al_{\ell,0}^{(0)} +2(\ld-\ell) \al_{\ell,0} ^{(1)},\   \  \al_{1,0}^{(1)} =1.\label{3.13}
\end{align}
To show \eqref{3.12}, we need to consider the coefficient of   term  $f(\frac{t}{\t})(\sin t)^{2\ld-2\ell-2} (\cos t)^{\ell}$ in \eqref{3.11}, from which we have $$\al_{\ell+1, 0}^{(0)}=\al_{\ell, 0}^{(0)}(2\l-2\ell-1) =(2\ld-1)(2\ld-3)\cdots (2\ld-2\ell+1)(2\ld-2\ell-1).$$\\
To show \eqref{3.13}, we need to consider the coefficient of   term  $\t^{-1}f'(\frac{t}{\t})(\sin t)^{2\ld-2\ell-1} (\cos t)^{\ell}$ in \eqref{3.11}, from which we have $$\al_{\ell+1, 0}^{(1)}=\al_{\ell, 0}^{(0)}+\al_{\ell, 0}^{(1)}(2\l-2\ell).$$
This proves that the identity \eqref{3.7} holds in the case of $\ell+1$ and completes the induction. In particular, if $\ell=\ld$, then \eqref{eqn3.9} holds.
\end{proof}

Recall the following well-known properties on Gegenbauer polynomials:
\begin{align}\label{5.5}
\Bl( R_{n+1}^{\mu-1} (x)\Br)' =\f {(n+1)(n+2\mu-1)}{2\mu-1} R_n^\mu (x),\   \  \
\end{align}
\begin{align}\label{4-2-1}
\lim_{\mu\to 0+} R_n^\mu(\cos t)= \cos (n t).
\end{align}
Using   \eqref{5.5},  \eqref{4-2-1} and integration by parts $\ld$ times on \eqref{3.6}, we obtain
\begin{align}\label{5.7}
\ta^{2\ld+1} I_n(\ta)=c_n(\ld)\int_0^\t F_\ld (t) \cos ((n+\ld)t)\, dt
\end{align}
for some constant $c_n(\ld)>0$. Note that \eqref{3.8} and \eqref{3.9} imply that
$$ \al_{\ell+1, 0}^{(1)} +\al_{\ell+1, 0}^{(0)} =2(\ld-\ell) \Bl(\al_{\ell, 0}^{(1)} +\al_{\ell,0}^{(0)}\Br),$$
which in turn implies that
$$\al_{\ld, 0}^{(1)}+\al_{\ld,0}^{(0)}=2^{\ld}\ld!=(d-1)!!.$$ 
Thus, by our assumption on the function $f$, it is easily seen that   $F_\ld$ has the following properties:\begin{enumerate}[\rm (i)]
	\item  $F_\ld\in C^{3}[0,\infty)$ and  $F_\ld$ is supported on $[0,\ta]$;
	\item $F_\ld '(0) =\Bl(\al_{\ld,0}^{(0)} +\al_{\ld,0}^{(1)}\Br) \ta^{-1} f'(0)= (d-1)!! \ta^{-1} f'(0)<0,$ and
	$|F_\ld'''(t)|\leq C_f \ta^{-3}$ for any $t\in [0,\ta]$.
\end{enumerate}

To this end,  setting  $N=n+\ld$, and  using integration by parts three times on the right-hand side of \eqref{5.7}, we obtain
\begin{align*}
\int_0^\t F_\ld (t)\cos (Nt)\, dt
&=-\f 1 {N^2} F_\ld'(0) +\f1{N^3}\int_0^\t F_\ld'''(t) \sin (Nt)\, dt\\
&\ge N^{-2} \ta^{-1} \Bl[ -(d-1)!! f'(0) - C \f 1{N\ta}\Br],
\end{align*}
which is positive provided that $n\ta \ge A$ and $A$ is sufficiently large   depending only on $f$.
This proves \eqref{3.6} in the case of $n\ta \ge A$.

\subsubsection{ Case (ii).}
In this case, we shall prove that there exists a constant $B\in (0,1)$ depending only on $f$ such that \eqref{3.6} is true whenever $n\ta\leq B$.

To see this, we rewrite $I_n(\ta)$ into two parts
\begin{align*}
	I_n(\ta)& =\ta^{-2\ld-1}\Bl[\int_0^\infty f(\ta^{-1} t)  t^{2\ld}\, dt + \int_0^{\ta} f(\ta^{-1} t)  \Bl( R_n^{\ld} (\cos t) \f {\sin^{2\ld} t} {t^{2\ld}} -1\Br)t^{2\ld}\, dt\Br].
\end{align*}
Note that for any $t\in [0,\t]$,
\begin{align*}
	|R_n^\l(\cos t)(\f{\sin t}{t})^{2\l}-1|&\leq|R_n^\l(\cos t)(\f{\sin t}{t})^{2\l}-(\f{\sin t}{t})^{2\l}|+|(\f{\sin t}{t})^{2\l}-1|\\
	&\leq|(\f{\sin t}{t})^{2\l}|\cdot|R_n^\l(\cos t)-R_n^\l(\cos 0)|+C_dt\\ 
	&\leq nt||R_n^\l(\cos \cdot)||_{\infty}+C_dnt\leq C'_d nt,
\end{align*}
where $C_d$ and $C'_d$ are  constants depending on $d$, and the second last inequality follows from Bernstein's inequality for trigonometric polynomials.
Then using the same notation in Proposition~\ref{prop2.5} and noticing that $ \wh{f}_d (0)>0$,
\begin{align*}
	I_n(\ta) &\ge \int_0^\infty f(t) t^{d-1}\, dt - C'_d n \ta^{-2\ld-1} \int_0^\ta t^{2\ld+1}\, dt\\
	&\ge c_d \wh{f}_d (0) -C'_d n,
\end{align*}
which implies that $I_n(\ta)>0$ whenever $n\ta\leq c_d \wh{f}_d (0)/C'_d$.

\subsubsection{Case (iii).}\label{case3}

In this case, we shall prove \eqref{3.6} for the remaining case $B \leq n\ta \leq A$.

Let $N=n+\ld$, and by substitution $t=\f{t'}{N}$, we can write
$$I_n(\ta) = N^{-1} \ta^{-2\ld-1} \int_0^{N\ta} f\Bl( \f {t'} {N\ta}\Br) R_n^\ld (\cos\f {t'} N) \Bl(\sin \f {t'} N\Br)^{2\ld}\, d{t'}.$$
Using Proposition ~\ref{lem3.1}, we have that for $0\leq {t'} \leq N\ta \leq A$,
$$ R_n^\ld (\cos \f {t'} N) =c_\ld \Bl(\f {{t'}/N} { \sin({t'}/N)}\Br)^{\ld} \Bl [ j_{\ld-\f12} ({t'}) +O(n^{-1}) \Br],$$
where $c_\ld =2^{\ld-\f12} \Ga(\ld+\f12)$. Set $O_{A,B}$ term to be uniform in  $n$, ${t'}$, $\ta$, but may depend on the constants $A$ and $B$.  By  substitution $x=\frac{t'}{N\t}$, we have
\begin{align*}
I_n(\ta)
&=c_\ld \int_0^1 f(x) j_{\ld-\f12} (N\ta x) x^{2\ld} \Bl( \f {\sin (\ta x)}{\ta x} \Br)^{\ld} \, dx +  O_{A,B}(n^{-1})\\
&\ge c_\ld'  \int_0^1 f(x) j_{\ld-\f12} (N\ta x) x^{2\ld}\, dx + O_{A,B}(n^{-1})\\
&\ge c_\ld' \min_{B \leq |\xi|:=N\t\leq A } \int_0^\infty f(x) j_{\ld-\f12} (|\xi|x) x^{2\ld}\, dx + B^{-1}\ta^{ } O_{A,B}(1)\\
&=c_\ld''  \min_{B \leq |\xi|\leq A }\wh{f}_d (\xi) -C_{A,B} B^{-1} \t ,
\end{align*}
which is positive if $\t$ is small enough. The last step using the same notation in Proposition \ref{prop2.5}.  This shows \eqref{3.6} in this case.

\subsection{Step 3:}
In this step, we will show Lemma \ref{lem3-3} holds for a general continuous positive definite function  $f$ defined on $\RR^d$  without high regularity. The proof is based on the previous step 2. The main task in this proof is using the function $f$ to construct a  function $f^\va$ with high regularity, such that the Gegenbauer coefficients of $f^\va$ can approximate the Gegenbauer coefficients of $f$. The construction
requires two ingredients.

The first is a  finite  Borel measure defined on $\RR^d$:
$$ \mu_f:=f_d\cdot \chi_{B_{1-\va}} + M_{f_\va} m_\va \da_0,$$ where    $f_d$ to be the isotropic function on $\RR^d$ given by $f_d(x)=f(|x|)$ for $x\in\RR^d$; $B_r:=\{x\in \RR^d: |x|\leq r\}$ is the ball of radius $r$ on $\RR^d$; For $\va\in (0,1)$, $M_{f_\va} : =\max_{t\in [1-\va, 1]} |f(t)|$,
 $\da_0$ denotes the Dirac measure supported at the origin, and $$m_\va:=B_1-B_{1-\va}=\Bl|\{ x\in\RR^d:\  \  1-\va \leq |x|\leq 1\}\Br|=d^{-1} |\sph| \Bl( 1-(1-\va)^d\Br)\leq C \va.$$ Note that
\begin{align*}
\wh{\mu_f} (\xi) &=\int_{|x|\leq 1-\va} f(|x|) e^{-2\pi i x\cdot \xi}\, dx + m_\va M_{f_\va} \\
&=\wh{f_d} (\xi) +m_\va M_{f_\va} -\int_{1-\va<|x|\leq 1} f(|x|) \cos(2\pi i x\cdot \xi) dx\\
&\ge \wh{f_d} (\xi)+\int_{1-\va<|x|\leq 1} (M_{f_\va}-f(|x|)  )dx>0.
\end{align*}

The second ingredient is  an approximate identity:
$$\phi(x): =a_\ld (1-|x|)_+^{\ld+6},\   \  x\in\RR^d,$$
where the constant $a_\ld>0$ is chosen so that
$\int_{\RR^d} \phi(x)\, dx=1$.
It has been known that  $\wh{\phi}(\xi)>0$ for all $\xi\in\RR^d$  (See \cite{Gasper}).

Let $x=\f t\t$. Note that when $|t|<\t$, we have $0<|x|=|\f t\t|<1$. Define $f^\va_d(\f t\t):=f_d^\va(x)$ and 
$$ f_d^\va (x) :=\phi_{\va} \ast {\mu_f}(x) =\int_{\RR^d} \phi_\va(x-y) \, d\mu_f(y)=f_d \cdot \chi_{B_{1-\va}} \ast \phi_{\va} (x) + m_\va M_{f_\va} \phi_{\va} (x),$$
where   $\phi_{\va} (x) =\va^{-d} \phi(x/\va) $. Clearly, $f_d^\va(x)=f^\va(|x|)$ is an isotropic function on $\RR^d$, and
$$\wh {f_d^\va} (\xi) =\wh{\phi}(\va \xi) \wh{\mu_f} (\xi) >0,\   \ \xi\in \RR^d.$$ Since    $\phi_{\va} \in C_c^{\ld+5} (\RR^d)$  is supported on $\{x\in\RR^d:\  \ |x|\leq \va\}$,
and since $f_d \cdot \chi_{B_{1-\va}} \in L^1(\RR^d)$ is supported on $\{x\in\RR^d:\   \  |x|\leq 1-\va\}$, it follows that
$f^\va$ is a $(\ld+5)$-times continuously differentiable function on $[0,\infty)$
that is  supported on $[0, 1]$,
In addition,
$$( f^\va)'(0) =m_\va  M_{f_\va} \phi_{\va} '(0)= -\va^{-d-1}m_\va (f_d \cdot \chi_{B_{1-\va}}) a_\ld (\ld+6) <0.$$ Applying the conclusion that has already been proven in step 2, we obtain   that for any $\va\in (0,1)$ and $n\in\mathbb{N}_0$,
\begin{equation*} 
\int_{0}^\t f^\va (\f t\t) R_n^\ld (\cos t) (\sin t)^{2\ld}\, dt >0.
\end{equation*}
On the other hand, we have  
\begin{align*}
 \Bl|\int_0^\t \(f^\va(\f t\t) -f(\f t\t)\) R_n^\ld (\cos t) (\sin t)^{2\ld}\, dt \Br|&\leq \int_0^\t | f^\va(\f t\t) -f(\f t\t)| t^{d-1}\,  dt
 \leq \f{1} {|\mathbb{S}^{d-1}|} \|f_d^\va - f_d \|_{L^1}.
\end{align*}  Furthermore, by the triangle inequality, we have
\begin{align*}
\|f_d^\va - f_d \|_{L^1}& \leq  \|f_d\ast \phi_{\va}-f_d\|_{L^1} +\|(f_d-f_d \cdot \chi_{B_{1-\va}})\ast \phi_{\va}\|_{L^1} + \|(f_d \cdot \chi_{B_{1-\va}} )\ast \phi_{\va}-f_d^\va\|_{L^1}\\
&\leq     \|f_d\ast \phi_{\va}-f_d\|_{L^1}+\|f_d (1-\chi_{B_{1-\va}})\|_{L^1} +C \va^{\ld+1}.
\end{align*}
Now, letting $\va\to 0+$, we then have $\|f_d\ast \phi_{\va}-f_d\|_{L^1} \to 0$ by the norm convergence of approximation to the identity. The second and third terms go  to $0$ evidently. Hence, we deduce that  
\begin{align}\label{3.17}
&\Bl|\int_0^\t \(f^\va(\f t\t) -f(\f t\t)\) R_n^\ld (\cos t) (\sin t)^{2\ld}\, dt \Br| \leq \va.
\end{align}
Thus, letting $\va\to 0+$ in \eqref{3.17}, we obtain for all $n\in\mathbb{N}_0$,
\begin{equation}\label{5.10}
\int_0^\t f(\f t\t) R_n^\ld (\cos t) (\sin t)^{2\ld}\, dt \ge 0.
\end{equation}

Finally, we show that if $\wh{f_d}(\xi)>0$ for all $\xi\in \RR^d$, then \eqref{5.10} with strict inequality holds.  To see this, we need to use the \eqref{4.2} to obtain that for every $\ta\in (0,\pi),$
\begin{align*}
	I_n(\ta)&=\ta^{-2\ld-1}\int_0^\ta f(\f t\t) R_n^\ld (\cos t) (\sin t)^{2\ld} \, dt\\
	&= \ta^{-2\ld-1}2^{2\lambda}\int_0^\ta f(\f t\t) R_n^\ld (\cos t)(\cos \f t2)^{2\ld} (\sin \f t2)^{2\ld} \, dt\\
	& = \ta^{-2\ld-1}2^{2\lambda}\int_0^\ta f(\f t\t) \sum_{j=n}^{n+\ld} a_{n,j} ^\ld R_{2j}^{\ld}(\cos \f t2)   (\sin \f t2)^{2\ld} \, dt\\
	&=\ta^{-2\ld-1}2^{2\lambda+1} \sum_{j=n}^{n+\ld} a_{n,j} ^\ld 	I_{2j}(\f \ta 2).
\end{align*} Thus, for a fixed $n$, if we repeat the same process $m$-times, we have

\begin{align}\label{3.20}
I_n(\ta)&=(\ta^{-2\ld-1}2^{2\lambda+1})^m \sum_{j_1=n}^{n+\ld}\sum_{j_2=2j_1}^{2j_1+\ld}\sum_{j_3=2j_2}^{2j_2+\ld}\cdots \sum_{j_m=2j_{m-1}}^{2j_{m-1}+\ld} a_{n,j_1}^\ld  a_{2j_1,j_2}^\ld  \cdots a_{2j_{m-1},j_m}^\ld	I_{2j_m}(\f \ta {2^m}).
\end{align} It follows from \eqref{5.10} that each term in \eqref{3.20} is nonnegative. We can conclude that for any $m\in \NN$,
$$I_n(\t)\ge  c_{n,m} I_{2^m n} (2^{-m}\t ),$$
where $c_{n,m}>0$ for any $m\in\NN$. Noticing that $2^mn\cdot 2^{-m}\t=n\t$, which satisfies the case in Section \ref{case3}, letting $m\to\infty$ and following the same argument in Case (iii), we can conclude that
\begin{align*}
	I_{2^mn}(2^{-m}\t)&=N^{-1}2{m(2\l+1)}\int_0^{n+\lambda2^{-m}}f(\f {t}{N+2^{-m}\l}) R_{2^mn}^\l(\cos \f tN)(\sin \f tN)^{2\l} dt + O\((2^mn)^{-1}\)\\
	&\geq c_\l \int_{0}^1 f(x) j_{\l-\f 12}\((n+2^{-m}\l)x\) x^{2\l} dx - C_n 2^{-m}\\
	&\geq c_\l \min_{n\leq u\leq n+\l}\int_{0}^1 f(x) j_{\l-\f 12}(ux) x^{2\l} dx - C_n 2^{-m},
\end{align*} 
where $x=\f {t}{N+2^{-m}\l}$ and $u=n+2^{-m}\l$. The last step is positive for a sufficiently large integer $m$. This in turn implies  the strict inequality in \eqref{5.10}.
  
\section{Proof of Corollary \ref{cor1.5} and Theorem \ref{thm1.4}}
In this section, we will show the Corollary \ref{cor1.5} and Theorem \ref{thm1.4} respectively.

\begin{proof}[Proof of Corollary \ref{cor1.5}.]
Since $g$ is an  isotropic   continuous  positive semi-definite function on $\SS^d$ with   compact support on $[0,\pi]$, by the Schoenberg's theorem and Lemma \ref{lem2.3}, it can be easily seen that for each positive integer $\ell$, and every nonnegative integer $n$,
the function
$$ R_n^\ld (\cos 2^{\ell} t) \Bl( \f { \sin (2^\ell t)}{2^\ell \sin t} \Br)^{2\ld}$$
can be expressed as a  combination of the polynomials $R_j^\ld(\cos t)$, $j\ge 0$. It follows that for any $\ell\in\NN$,
\begin{equation*} 
g(2^\ell t)  \Bl( \f { \sin (2^\ell t)}{ \sin t} \Br)^{2\ld}
\end{equation*}
is an  isotropic   continuous positive semi-definite function on $\SS^d$.

Let $x>0$ and $\ell\in \NN$. Set $n=n_{x,\ell}\in\NN$ be such that $\f {n-1} {2^\ell} \leq x < \f {n}{2^{\ell}}.$ Then by the above claim,
\begin{align*}
0&\leq n^{2\ld+1}2^{-2\ell \ld}  \int_0^{2^{-\ell } \pi} g(2^\ell t)  \(\sin (2^\ell t)\)^{2\ld} R_n^\ld (\cos t) \, dt \\
&= \int_0^{2^{-\ell } n\pi} g(2^\ell t n^{-1}) \Bl (\f{ \sin (2^\ell  t n^{-1})}  {2^\ell n^{-1} t}\Br)^{2\ld} R_n^\ld \(\cos (\f tn)\)  t^{2\ld}\, dt \\
&=\int_0^{x\pi} g(x^{-1} t) \Bl (\f{ \sin (x^{-1} t )}  {x^{-1}  t}\Br)^{2\ld} R_n^\ld \(\cos (\f tn)\)  t^{2\ld}\, dt+o (1),\
\end{align*}
where the last step uses the fact that $|x^{-1}-2^\ell n^{-1}|\leq \f 1{nx}\to 0$ as $n\to \infty$.  Thus, letting $n\to\infty$ and applying the Proposition \ref{lem3.1}, we get
\begin{align*}
0&\leq x^{-2\ld-1}\int_0^{\pi x} g(x^{-1} t) \Bl (\f{ \sin (x^{-1} t )}  {x^{-1}  t}\Br)^{2\ld} j_{\ld-\f12}  (t)  t^{2\ld}\, dt\\
&=\int_0^\infty  g(t) \Bl (\f{ \sin t}  { t}\Br)^{2\ld} j_{\ld-\f12}  (xt)  t^{2\ld}\, dt\\
&=c_\ld \wh{G_d} (\xi),
\end{align*}
where, by the notation in Proposition \ref{prop2.5}, $\xi\in\RR^d$, $|\xi|=x$ and
$$ G(t):=g(t) \Bl( \f {\sin t} t\Br)^{2\ld},\   \ t\ge 0.$$
This shows that $G$ is an isotropic continuous  positive semi-definite function on $\RR^d$ by    Bochner's theorem. Furthermore, when $d$ is odd, and by Theorem \ref{thm1.3},  $G$ is also  a  continuous positive semi-definite function on $\sph$. 
\end{proof}

\begin{proof}[Proof of Theorem \ref{thm1.4}.]
	Fix $x>0$.  Let $n\ge 2 x$ be an integer and let $\ta =\ta_n=\f x {n}$.
	Since $g(\theta^{-1}\cdot)$ is an isotropic positive semi-definite function on $\SS^d$  and $g$ is supported on $[0,\pi]$, 
	\begin{align*}
	0\leq   &n^{2\ld+1}  \int_0^{\ta \pi} g (\ta^{-1} t) R_n^\ld (\cos t) (\sin t)^{2\ld} \, dt  = \int_0^{x\pi} g\Bl( \f { t} x\Br) R_n^\ld (\cos \f tn) \Bl( \f {\sin (n^{-1} t)}{n^{-1} t}\Br)^{2\ld} t^{2\ld}\, dt.
	\end{align*}
	By  Proposition~\ref{lem3.1} (see also \cite[Theorem 8.21.12]{Sz}), letting $n\to\infty$ and $u=\f tx$ , we obtain
	\begin{align*}
 \lim_{n\to\infty} \int_0^{x\pi}  g\Bl( \f { t} x\Br) R_n^\ld (\cos \f tn) \Bl( \f {\sin (n^{-1} t)}{n^{-1} t}\Br)^{2\ld} t^{2\ld}\, dt    & =  \int_0^{x\pi} g\Bl(\f {t} x\Br)j_{\ld-\f12} (t) t^{2\ld}\, dt \\ &=x^{2\ld+1} \int_0^\infty  g(u) j_{\ld-\f12} (u x) u^{2\ld}\, du\\& =c_\l x^{2\ld+1} \wh{g_d} (\xi)\geq0,
	\end{align*} 
where, by the notation in  Proposition \ref{prop2.5}, $\xi\in\RR^d$, and $|\xi|=x$.
	By  Bochner's theorem and the arbitrariness    of $x$, we prove that $g$ is  a  positive semi-definite function on $\RR^d$.
\end{proof}

\section{The case of dimension $d=2$}\label{section5}
In this section, we deal with the Conjecture  \ref{conj1.9}.  Since the convergence of series   $\sum_{n=0}^{\infty}a_nC_n^{\l}(1)<\infty$ can be verified by using the same argument in Theorem \ref{thm1.3}, we only need to   consider the following conjecture: 

\begin{conj} \label{conj5.1}
	Let  $\delta\geq \l+1$ and $\lambda=\f{d-1}2 $.  Then for any $\t \in (0,\pi)$ and $n \in \mathbb{N}_0,$

 	\begin{equation*} 		 
 		\int_0^\t (\t-t)^{\delta}C^\l_n(\cos  t)\sin^{2\l} t d t> 0.		
 	\end{equation*}
\end{conj}  Actually, we can claim that to prove  Conjecture \ref{conj5.1},  it suffices to consider the boundary case $\delta=\l+1$.  
 In fact, notice that for $a>-1, b>0$,  by using the substitution $t=u+s(\t-u)$, we have \begin{align*} \int_{0}^{\t}(\t-t)^a\int_{0}^t(t-u)^b g(u) du dt & =  \int_{0}^{\t}g(u)\int_{u}^\t(t-u)^b(\t-t)^a dt du\\&  =  \int_{0}^{\t}g(u)\int_{0}^1(\t-u)^{a+b+1}(1-s)^a s^b ds du\\ &=B(b+1,a+1)  \int_{0}^{\t}g(u) (\t-u)^{a+b+1}du, \end{align*} where $B(x,y)=\int_{0}^1t^{x-1}(1-t)^{y-1}dt$ is the Beta function. Then  we have \begin{align*} 	\int_{0}^{\t}g(u) (\t-u)^{a+b+1}du=\frac{1}{B(b+1, a+1)}\int_{0}^{\t}(\t-t)^a\int_{0}^t(t-u)^b g(u) du dt. \end{align*} Now for $\delta_1> \delta_2 $,  let $a=\d_1-\d_2-1$, $b=\delta_2$, and $g(u) =C_n^\l(\cos u)(\sin u)^{2\l}$. It follows immediately that  \begin{align*}  \int_{0}^{\t}  (\t-u)^{\delta_1}& C_n^\l(\cos u)(\sin u)^{2\l} du\\&=\frac{1}{B(\delta_2+1,\delta_1-\delta_2)}\int_{0}^{\t}(\t-t)^{\delta_1-\delta_2-1}\int_{0}^t(t-u)^{\delta_2} C_n^\l(\cos u)(\sin u)^{2\l}  du dt. \end{align*} Hence, we will focus on the boundary case.
 
Our main result is to show that  Conjecture \ref{conj5.1} is true when the dimension $d=2$ under the restriction when $\t$ is   small.  By the argument above,  it suffices to consider the boundary case $\delta=\f32$. More precisely, we state  the main  theorem  below.

\begin{thm}\label{thm5.3}
The function
	$$f_{\t,\delta}(t)=(\t-t)_+^\f32$$    is isotropic positive definite on $\mathbb{S}^2$ whenever $0\leq\t\leq c$,
	 where $c$ is an absolute constant $0 < c < \pi$.
\end{thm}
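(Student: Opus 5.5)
To prove Theorem~\ref{thm5.3}, with $d=2$ and $\ld=\f12$, I would show that for every $n\in\NN_0$ and all sufficiently small $\t$
$$J_n(\t):=\t^{-5/2}\int_0^\t(\t-t)^{3/2}P_n(\cos t)\sin t\,dt>0 .$$
Here $C_n^{1/2}=P_n$ is the Legendre polynomial. Since $(\t-t)^{3/2}$ vanishes at $t=\t$, the integrand is supported where positivity of coefficients is needed, and Schoenberg's theorem together with the Abel summability argument of Section~3 then gives the conclusion. Lemma~\ref{lem2.3} has no analogue for half-integer $\ld$, so there is no reduction from large $\t$ to small $\t$. This is why I would work directly with small $\t$ and split the range of $n\t$ into three regimes, exactly as in Step~2 of Section~3.

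\textbf{Regime $n\t\le B$.} Following Case~(ii), I would use
$$|P_n(\cos t)\tfrac{\sin t}{t}-1|\le Cnt .$$
This gives $J_n(\t)\ge\int_0^1(1-x)^{3/2}x\,dx-Cn\t$, which is positive once $B$ is a small absolute constant.

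\textbf{Regime $n\t\ge A$.} In the proof of Theorem~\ref{thm1.3} one integrates by parts against $\cos((n+\ld)t)$, but for $\ld=\f12$ that reduction does not produce a cosine. Instead I would write $P_n(\cos t)\sin t$ as a derivative:
$$P_n(\cos t)\sin t=-\frac{d}{dt}\,\frac{P_{n+1}(\cos t)-P_{n-1}(\cos t)}{2n+1}.$$
Integrating by parts once gives
$$\int_0^\t(\t-t)^{3/2}P_n(\cos t)\sin t\,dt=\frac{3}{2(2n+1)}\int_0^\t(\t-t)^{1/2}\big(P_{n-1}-P_{n+1}\big)(\cos t)\,dt .$$
I would then insert the Bessel approximation \eqref{C.1}, $P_m(\cos t)\approx(t/\sin t)^{1/2}J_0((m+\f12)t)$. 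The error is $0.1711/m$ uniformly on $[0,\pi/2]$, which is why explicit constants are needed. The main term becomes an integral of $(\t-t)^{1/2}\big(J_0((n-\f12)t)-J_0((n+\f32)t)\big)$. After scaling $t=\t x$ and expanding the difference to first order in the shift, this is essentially
$$2\t^{3/2}\cdot\t\int_0^1(1-x)^{1/2}xJ_1(n\t x)\,dx .$$
For large $M=n\t$ this is governed by the endpoint behaviour at $x=1$, and the $(1-x)^{1/2}$ singularity gives a term of size $M^{-3/2}$ with a nonvanishing sign that must be checked. The Legendre error term contributes $O(\t^{3/2}n^{-1}\cdot\t)$, and I must show it is dominated.

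I expect this to be the main obstacle. The leading term in the Bessel integral is positive, since it is the Euclidean statement that $(1-|x|)_+^{3/2}$ is positive definite on $\RR^2$. However, this term decays like a power of $M$, so a crude $O(1/n)$ Legendre error does not obviously lose to it uniformly in $n$. The first thing I would try is to use the positivity of the Euclidean Fourier transform on $\RR^2$ quantitatively, namely an explicit lower bound $\wh{f_2}(\xi)\ge c|\xi|^{-5/2}$ coming from the asymptotics of the $J$-integral. I would pair it with a refined Hilb-type expansion that carries the next-order term, so that the remaining error is $O(\t^2)$ relative to the main term. Choosing $\t\le c$ small absorbs this error.

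\textbf{Regime $B\le n\t\le A$.} This is handled as in Case~(iii) via Proposition~\ref{lem3.1}. The main term $c\min_{B\le|\xi|\le A}\wh{f_2}(\xi)$ is positive by compactness and Gasper's positivity. The errors are $O_{A,B}(\t)$, and they are absorbed by shrinking $c$.
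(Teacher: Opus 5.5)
There is a genuine gap in the regime $n\t\ge A$. Your regimes $n\t\le B$ and $B\le n\t\le A$ are essentially the paper's Cases (ii) and (iii). The regime $n\t\ge A$, which you rightly call the crux, is not proved, and the plan you sketch for it rests on wrong asymptotics. For $f_2(x)=(1-|x|)_+^{3/2}$ on $\RR^2$ one has, as $u\to\infty$,
$$\int_0^1(1-x)^{3/2}J_0(ux)\,x\,dx=\f32\,u^{-3}\Bl(1-\f{1}{\sqrt2}\sin u\Br)+O(u^{-4}).$$
So $\wh{f_2}(\xi)\asymp|\xi|^{-3}$, and the bound $\wh{f_2}(\xi)\ge c|\xi|^{-5/2}$ you intend to use is false.

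You have also placed the positive part at the wrong end. The nonoscillating positive contribution comes from the vertex $x=0$: the term $-\f32x$ in the profile $(1-x)^{3/2}$, i.e.\ nonzero slope at the origin. The singularity at $x=1$ gives a term of the same order that changes sign. In your $J_1$ form, $\int_0^1(1-x)^{1/2}xJ_1(Mx)\,dx\approx M^{-2}\bigl(1-2^{-1/2}\sin M\bigr)$, not a signed $M^{-3/2}$ term. This is exactly why $\delta=\f32$ is critical: positivity depends on comparing two explicit constants of equal order, and your sketch never sets up that comparison.

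The approximation error also cannot be absorbed by taking $\t$ small. The main term has size $\t^{1/2}n^{-3}$ and must dominate uniformly as $n\to\infty$ with $\t$ fixed. Hilb's formula has remainder $t^{1/2}O(n^{-3/2})$ for $t\ge c/n$, which contributes $O(\t^4n^{-3/2})$. That is a relative error of order $(n\t)^{3/2}\t^2$, unbounded in this regime however small $\t$ is. You would need enough terms of a uniform Bessel-type expansion to push the remainder below $n^{-3}$. You would then have to estimate each correction as an oscillatory integral with its own endpoint terms.

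The paper avoids all this by working directly with Legendre polynomials:
\begin{itemize}
\item It integrates by parts repeatedly (via \eqref{1-3}, \eqref{1-7}, \eqref{1-8}) to write $\f{4n(n+1)}3\int_0^\t(\t-t)^{3/2}P_n(\cos t)\sin t\,dt=I_{n,1}+R_{n,1}+R_{n,2}-R_{n,3}$, with $R_{n,2}\ge0$.
\item The nonoscillating term satisfies $I_{n,1}\ge\f{3}{2n}\sqrt\t-O\bigl((n\t)^{-3/2}\t^{3/2}\bigr)$. This comes from the positive cosine expansion \eqref{1-5} of $P_n$.
\item The endpoint term $R_{n,3}$ has leading part $\f{\sqrt2}{n\t}\sin(N\t)\bigl(\f{\sin\t}{\t}\bigr)^{1/2}\t^{3/2}$. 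This is computed with \eqref{propA.4} and Propositions~\ref{lem-asym} and~\ref{lem-asym2}.
\item The remaining term is $R_{n,1}=O\bigl((n\t)^{-2}\t^{3/2}\bigr)$.
\end{itemize}
Since $\f32>\sqrt2$, this gives positivity for all $n\t\ge A$, using only $\t\le\f\pi2$. After these integrations by parts the main term has relative size $(n\t)^{-1}$, so pointwise errors of relative size $(n\t)^{-3/2}$ suffice. Smallness of $\t$ is needed only in Case (iii).
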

\begin{rem}
	In the proof, we  give  an upper estimate of   $c$ and it can be taken at most  $1.2644 \times 10^{-21}$. We believe this value could be improved.
\end{rem}
For the proof of Theorem \ref{thm5.3},  by  the above analysis according to the  Schoenberg's theorem, it suffices to show that 
\begin{align}\label{5.1}
 & \int_0^\t (\t-t)^{\f 32}P_n(\cos t)\sin t dt > 0,\quad \forall \, n \in \mathbb{N}_0.
 \end{align}
The proof of the positivity in \eqref{5.1} consists of three cases:  (i) $n\t\geq A $;  (ii) $n\t\leq B $;   (iii) $B\leq n\t\leq A$.  
Next, we will give  detailed proofs of the three cases. 
\subsection{Case (i).} In this case, we shall prove that there exists a constant $A>1$  such that \eqref{5.1} holds whenever $n\ta \ge A$.  The proof is long and will
be divided into several subsections.

\subsubsection{Decomposition of the integral.}

We first give the following decomposition on the integral \eqref{5.1}:
\begin{lem} 
	For $\t\in (0, \f\pi 2]$ and $n\ge 1$, 
	\begin{align*}
	\f {4n(n+1)}3 &\int_0^\t (\t-t)^{\f 32} P_n(\cos t) \sin  t\, d t=I_{n,1}(\t) +R_{n,1}(\t)+R_{n,2}(\t)-R_{n,3}(\t),\end{align*}
	where
	\begin{align*}
	I_{n,1}(\t)&= \f2{n(n+1) } \int_0^\t \Bl[ 1-P_{n}(\cos t)\Br]\f { \sqrt{\t-t}\cos t}{\sin^2 t}\, d t,\\	
	R_{n,1}(\t)&=  \f {1}{2n} \int_0^\t P_{n-1}^{(1,1)} (\cos t)\f{ \sin t \cos t}{\sqrt{\t-t}} \, d t,\\
	R_{n,2}(\t)&=\f 1{n(n+1) } \int_0^\t \Bl[ 1-P_{n}(\cos t)\Br]  \f 1 {(\sin t) \sqrt{\t-t}} \, dt\ge 0,\\
	R_{n,3}(\t)&=\int_{0}^\t P_n(\cos t)\f {\sin t}{ \sqrt{ \t-t}}\, dt.\\
	\end{align*}

\end{lem}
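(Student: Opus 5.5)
The plan is to derive the identity from the Legendre differential equation followed by a few integrations by parts, keeping track of the singular weights at $t=0$ and $t=\t$. In the variable $t$, with $x=\cos t$, the Legendre equation reads
\[
\f{d}{dt}\Bl(\sin t\,\f{d}{dt}P_n(\cos t)\Br)=-n(n+1)P_n(\cos t)\sin t .
\]
Substitute this into $\int_0^\t(\t-t)^{3/2}P_n(\cos t)\sin t\,dt$ and integrate by parts once. The boundary terms vanish: at $t=\t$ because of $(\t-t)^{3/2}$, and at $t=0$ because of $\sin t$. Using $\f{d}{dt}[1-P_n(\cos t)]=\sin t\,P_n'(\cos t)$, this gives
\[
\f{4n(n+1)}3\int_0^\t(\t-t)^{\f32}P_n(\cos t)\sin t\,dt=2\int_0^\t\sqrt{\t-t}\,\sin^2 t\,P_n'(\cos t)\,dt .
\]
Everything now reduces to rewriting the right-hand side, which I abbreviate as $2J$.

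Two tools will be used repeatedly. The first is $P_n'(x)=\f{n+1}2P_{n-1}^{(1,1)}(x)$, which shows
\[
R_{n,1}(\t)=\f1{n(n+1)}\int_0^\t P_n'(\cos t)\f{\sin t\cos t}{\sqrt{\t-t}}\,dt .
\]
So $R_{n,1}$ is exactly what appears when $\sqrt{\t-t}$ is differentiated against $\cos t\,[1-P_n(\cos t)]'$-type terms. The second is to split $\sin^2t=1-\cos^2t$, or equivalently to write $\sin t\,d[1-P_n]=\f{\sin^2 t}{\sin t}\,d[1-P_n]$, and integrate by parts against weights such as $\sqrt{\t-t}\,\cot t$. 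Differentiating these weights produces:
\begin{itemize}
\item $-\sqrt{\t-t}/\sin^2t$ terms, which yield the $[1-P_n]\cos t\sqrt{\t-t}/\sin^2t$ structure of $I_{n,1}$;
\item $-\f{1}{2\sqrt{\t-t}}$ terms, which yield $R_{n,2}$ (the $[1-P_n]/(\sin t\sqrt{\t-t})$ piece) and $R_{n,1}$.
\end{itemize}
The piece $R_{n,3}$, which contains $P_n$ itself, should be produced by applying the differential equation a second time. Concretely, I would collect the terms of the form $\int P_n'(\cos t)\,(\cdots)\,dt$, replace $n(n+1)P_n\sin t$ by $-\f{d}{dt}(\sin t\,\f{d}{dt}P_n)$, and match coefficients. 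The factor $\f1{n(n+1)}$ in $I_{n,1}$ and $R_{n,2}$ should come from this second use of the equation.

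In practice, I would first guess the right primitive by comparing integrands. I would then verify the identity by showing that
\[
2J-\bigl(I_{n,1}+R_{n,1}+R_{n,2}-R_{n,3}\bigr)
\]
is the integral of an exact derivative whose boundary values vanish. Alternatively, one can check the identity as a pointwise equality of integrands after moving all derivatives onto $P_n$ and using the ODE, leaving only boundary terms. The inequality $R_{n,2}\ge0$ is immediate from $|P_n|\le1$ and $\sin t>0$ on $(0,\t]\subset(0,\pi/2]$.

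The main obstacle is justifying the boundary terms, because the new weights are singular. At $t=0$, the factors $1/\sin^2t$ and $1/\sin t$ are harmless: $1-P_n(\cos t)=O(n^2t^2)$, so the integrands of $I_{n,1}$ and $R_{n,2}$ are bounded near $0$, and products like $\cot t\,[1-P_n]$ vanish there. At $t=\t$, the weight $1/\sqrt{\t-t}$ is integrable, but boundary terms like $\sqrt{\t-t}\,(\cdots)$ must be shown to vanish. To handle this cleanly I would integrate by parts over $[\va,\t-\eta]$ and then let $\va,\eta\to0^+$. The restriction $\t\le\pi/2$ keeps $\sin t$ and $\cos t$ nonnegative and $\sin t$ bounded away from zero away from $t=0$, so that every limit is routine.
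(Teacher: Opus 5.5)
Your first step is correct and is, in substance, the paper's first step. The paper integrates by parts with the antiderivative $\int P_n(x)\,dx=\frac{2}{n}P_{n+1}^{(-1,-1)}(x)=-\frac{1-x^2}{2n}P_{n-1}^{(1,1)}(x)$, which is the Legendre equation in integrated form. It lands on the same quantity $2J=2\int_0^\theta\sqrt{\theta-t}\,\sin^2t\,P_n'(\cos t)\,dt$.

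After that, however, there is a genuine gap. You never say which integrations by parts to perform. ``Guess the primitive and check that the difference is an exact derivative'' leaves the whole content of the identity undone, namely the choice of weights and the matching of coefficients.

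Your account of where $R_{n,3}$ comes from is also wrong. It is produced by a plain integration by parts of $2J$, with coefficient $-1$ and no factor $1/(n(n+1))$, not by a second use of the differential equation. If you run the verification in the direction you describe, you substitute $P_n(\cos t)\sin t=-\frac{1}{n(n+1)}\frac{d}{dt}\bigl(\sin t\,\frac{d}{dt}P_n(\cos t)\bigr)$ into $R_{n,3}$ and integrate by parts against $(\theta-t)^{-1/2}$. Then both the boundary term and the new integrand, which carries the non-integrable weight $(\theta-t)^{-3/2}$, blow up at $t=\theta$.

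The missing argument is three integrations by parts, all with vanishing boundary terms.

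First, move the derivative off $P_n$ in $2J$:
\begin{equation*}
2J=-2\int_0^\theta\sqrt{\theta-t}\,\sin t\,\frac{d}{dt}P_n(\cos t)\,dt=-R_{n,3}(\theta)+2\int_0^\theta P_n(\cos t)\sqrt{\theta-t}\,\cos t\,dt .
\end{equation*}
This is the only source of $R_{n,3}$.

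Second, use the equation once more, in the form $P_n(\cos t)\sin t=Q'(t)$ with $Q(t)=\frac{\sin^2t}{n(n+1)}P_n'(\cos t)=\frac{\sin^2 t}{2n}P_{n-1}^{(1,1)}(\cos t)$. Integrate by parts against $\sqrt{\theta-t}\,\cot t$, noting that $Q\cot t=O(t)$ at $0$:
\begin{equation*}
2\int_0^\theta P_n(\cos t)\sqrt{\theta-t}\,\cos t\,dt=R_{n,1}(\theta)+\frac{2}{n(n+1)}\int_0^\theta P_n'(\cos t)\sqrt{\theta-t}\,dt .
\end{equation*}

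Third, write $P_n'(\cos t)=\frac{1}{\sin t}\frac{d}{dt}\bigl[1-P_n(\cos t)\bigr]$ and integrate by parts against $\sqrt{\theta-t}/\sin t$. Since
\begin{equation*}
-\frac{d}{dt}\frac{\sqrt{\theta-t}}{\sin t}=\frac{1}{2\sqrt{\theta-t}\,\sin t}+\frac{\sqrt{\theta-t}\,\cos t}{\sin^2t},
\end{equation*}
the last integral equals $R_{n,2}(\theta)+I_{n,1}(\theta)$.

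This is exactly the paper's sequence. Your treatment of the boundary terms and of $R_{n,2}\ge 0$ is fine.
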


\begin{proof}  Using \eqref{1-3}  and integration by parts, we obtain  that 
	\begin{align*}A_n:=&	\int_0^\t (\t-t)^{\f 32} P_n(\cos t) \sin t\, d t= -\f 2n \int_0^\t (\t-t)^{\f 32} \Bl( P_{n+1}^{(-1,-1)} (\cos t)\Br)' \, dt\\
	& =- \f 3n\int_0^\t P_{n+1}^{(-1,-1)} (\cos t) (\t-t)^{\f12}\, dt= \f 3{4n}\int_0^\t P_{n-1}^{(1,1)} (\cos t)\sin^2t (\t-t)^{\f12}\, dt,\end{align*}
	where we used \eqref{1-3} in the first step, and  \eqref{1-7} in the third step.
	Applying integration by parts to this last integral  once again yields
	\begin{align*}
	A_n&=-\f 32 \f 1{n(n+1)} \int_0^\t (\t-t)^{\f12} \Bl(P_{n} (\cos t)\Br)'\sin t\, dt \\
	&=-\f 34 \f 1{n(n+1)}\int_0^\t P_{n} (\cos t) \f{ \sin t}{\sqrt{\t-t}}\, dt +  \f 32 \f 1{n(n+1)}\int_0^\t P_{n} (\cos t)(\t-t)^{\f12} \cos t\, d t\\
	&=: A_{n,1} +A_{n,2}.  \end{align*}
	For the term $A_{n,2}$, we use integration by parts to obtain 
	\begin{align*}
	A_{n,2}
	&=-\f 3{n^2(n+1)}\int_0^\t \Bl(P_{n+1}^{(-1,-1)} (\cos t)\Br)' (\t-t)^{\f12} \cot t\, d t\\
	&=-\f 3{n^2(n+1)}\int_0^\t P_{n+1}^{(-1,-1)} (\cos t)\Bl[\f 12 (\t-t)^{-\f12} \cot t + \f{ (\t-t)^{\f12}}{\sin^2 t} \Br]\, d t\\
	&=\f 3 {8n^2(n+1)}\int_0^\t P_{n-1}^{(1,1)} (\cos t)\f{\sin t\cos t}{\sqrt{\t-t}}\, dt   + \f 3 {4n^2(n+1)}\int_0^\t P_{n-1}^{(1,1)} (\cos t) (\t-t)^{\f12} \, d t,
	\end{align*}
	where the third  step uses \eqref{1-7}. Putting the above together, we then obtain 
	\begin{align*}
	& \f {4n(n+1)}3  A_n=-\int_0^\t P_{n}(\cos t)\f{ \sin t}{\sqrt{\t-t}} d t\\
	& + \f {1}{2n} \int_0^\t P_{n-1}^{(1,1)} (\cos t) \f{\sin t \cos t}{\sqrt{\t-t}}\, dt   + \f {1}{n} \int_0^\t P_{n-1}^{(1,1)} (\cos t) (\t-t)^{\f12} \, dt\\
	&=-R_{n,3}(\t) +R_{n,1}(\t) + \f {1}{n} \int_0^\t P_{n-1}^{(1,1)} (\cos t) (\t-t)^{\f12} \, dt.
	\end{align*}
	For the last integral, we apply  integration by parts again to obtain  
	\begin{align*}
	\f 1 {n} &\int_0^\t P_{n-1}^{(1,1)} (\cos t) (\t-t)^{\f12}\, dt=\f 2 {n (n+1)} \int_0^\t \f {(\t-t)^{\f12} }{\sin t} \Bl(P_{n}(1)- P_{n}(\cos t) \Br)'\, d t\\
	&= \f2{n(n+1) } \int_0^\t \Bl[ 1-P_{n}(\cos t)\Br]\f { \sqrt{\t-t}\cos t}{\sin^2 t}\, dt+
	\f 1{n(n+1) } \int_0^\t \Bl[ 1-P_{n}(\cos t)\Br]  \f 1 {\sin t \sqrt{\t-t}} \, dt\\
	&=I_{n,1}(\t) +R_{n,2}(\t).\end{align*}
	This completes the proof of the lemma.
\end{proof}

\subsubsection{Estimates of      $I_{n,1}(\t)$, $R_{n,3}(\t)$, and $R_{n,1}(\t)$}
In this subsection, we will give the upper bounds for $R_{n,1}(\t)$ and $R_{n,3}(\t)$ (see Lemma \ref{lem2.4} and \ref{lem2.5}), and  the lower bound for $I_{n,1}(\t)$(see Lemma \ref{lem2.6}).

\begin{lem}\label{lem2.4} If $n \t\ge 5$ and $\t\in (0, \f\pi 2]$, then 
	\begin{align*}
	|	R_{n,1}(\t)|&=  \f {1}{2n}\Bl| \int_0^\t P_{n-1}^{(1,1)} (\cos t) \f{\sin t \cos t}{\sqrt{\t- t}} \, d t\Br|\leq C (n\t )^{-2} \t^{\f 32},
	\end{align*}
	where the constant $C$ can be taken to be $42.6909$.
\end{lem}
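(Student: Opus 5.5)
The idea is to exploit the oscillation of $P_{n-1}^{(1,1)}(\cos t)$ on the scale $1/n$ together with the square-root singularity of the weight at $t=\t$. Write $h(t)=\f{\sin t\cos t}{\sqrt{\t-t}}$. The weight $h$ vanishes linearly at $t=0$ and is singular like $(\t-t)^{-1/2}$ at $t=\t$.

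First I would reduce to a Bessel-type oscillatory integral. By Proposition~\ref{lem3.1} with $\a=1$, and using $P_{n-1}^{(1,1)}(1)=n$, we have on $[0,\pi/2]$
\[
P_{n-1}^{(1,1)}(\cos t)=2n\Bl(\f t{\sin t}\Br)^{\f32}\Bl[j_1(Nt)+O(n^{-1})\Br],\qquad N=n+\f12 .
\]
To keep the constants explicit, I would rather not use this asymptotic directly. Instead, the representation \eqref{1-7} from the appendix, $P_{n-1}^{(1,1)}(\cos t)\sin t=c\,\f{d}{dt}P_n(\cos t)$, turns $R_{n,1}$ into
\[
R_{n,1}(\t)=-\f1{n(n+1)}\int_0^\t \bigl(P_n(\cos t)\bigr)'\,\f{\cos t}{\sqrt{\t-t}}\,dt ,
\]
possibly with a different sign convention. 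This has the same form as the integrals already handled in the decomposition.

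Next I would split the integral at $\t-\eta$, choosing $\eta\asymp 1/n$ (say $\eta=1/n$, which is admissible since $n\t\ge5$).

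On $[\t-\eta,\t]$, the bound $|P_{n-1}^{(1,1)}(\cos t)|\lesssim n^{-1/2}t^{-3/2}$ should follow from the explicit estimate \eqref{C.1} and its Jacobi analogue, using $|J_1(x)|\le 0.7x^{-1/2}$ and $t\asymp\t$. Integrating $(\t-t)^{-1/2}$ over this interval gives $\eta^{1/2}$. The resulting contribution is
\[
\f1{2n}\,n^{-1/2}\t^{-3/2}\,\t\,\eta^{1/2}\asymp n^{-2}\t^{-1/2},
\]
which equals $(n\t)^{-2}\t^{3/2}$.

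On $[0,\t-\eta]$, I would integrate by parts once, moving the derivative onto $g(t)=\f{\cos t}{\sqrt{\t-t}}$. The boundary terms are $P_n(\cos(\t-\eta))g(\t-\eta)$ and $P_n(1)g(0)$. The second of these, $1/\sqrt\t$, is not small. It is cancelled against the corresponding terms in $R_{n,3}$ and $I_{n,1}$ only if one keeps the exact form, so I would keep it if necessary. Alternatively, I would avoid it by using the Bessel form near $0$: there the weight $\sin t\cos t\approx t$ vanishes, and $\int_0^{a} t^{3/2}\cdot t\,J_1$-type pieces are controlled by the $j_1$ decay.

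The cleaner route is to substitute $t=x/N$ and write the main term as
\[
\f{1}{N^{2}}\int_0^{N\t} j_1(x)\,x^{2}\,\f{1}{\sqrt{\t-x/N}}\,dx .
\]
Using $x^2 j_1(x)=xJ_1(x)=\f{d}{dx}\bigl(-xJ_0(x)\bigr)+J_0(x)$ and then integrating by parts against $(\t-x/N)^{-1/2}$, the main term reduces to integrals of $J_0$ and $xJ_0$ against $(\t-x/N)^{-3/2}$ truncated at distance $1$ from the endpoint. Each of these is $O(N^{-1/2}\t^{-1/2}\cdot N^{1/2})$, which after the prefactor gives $(n\t)^{-2}\t^{3/2}$.

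The $O(n^{-1})$ remainder in Proposition~\ref{lem3.1} contributes at most $\f1{2n}\cdot 2n\cdot n^{-1}\int_0^\t t(\t-t)^{-1/2}\,dt\lesssim n^{-1}\t^{3/2}$. This is \emph{not} of the required size $(n\t)^{-2}\t^{3/2}$, so the crude remainder is insufficient. I expect this to be the main obstacle: I need a remainder of order $t^{1/2}n^{-3/2}$, as in the refined Hilb-type bounds (the Szegő $(8.21.17)$ form), valid with explicit constants on $[0,\pi/2]$. I would take such an explicit bound from \cite{FW} for $\a=1$. With it, the remainder contributes $n^{-5/2}\t^{}$-scale terms, which are bounded by $(n\t)^{-2}\t^{3/2}$ whenever $n\t\ge5$.

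Finally, I would sum the explicit numerical constants from the three pieces (the endpoint piece, the main Bessel term, and the remainder) to obtain $C\approx42.69$.
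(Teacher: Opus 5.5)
There is a genuine gap. It comes from a size miscalculation that led you to abandon the route that actually works. Your first plan is exactly the paper's proof: split at $\theta-1/n$, bound $[\theta-1/n,\theta]$ pointwise, and on $[0,\theta-1/n]$ write $\frac{1}{2n}P^{(1,1)}_{n-1}(\cos t)\sin t\,dt=-\frac{1}{n(n+1)}\,d\bigl(P_n(\cos t)\bigr)$ (this is \eqref{1-3}), then integrate by parts against $\cos t/\sqrt{\theta-t}$.

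You dismissed this because the boundary term at $t=0$ is $1/\sqrt{\theta}$. But it carries the prefactor $\frac{1}{n(n+1)}$, so it equals $\frac{1}{n(n+1)\sqrt{\theta}}\le n^{-2}\theta^{-1/2}=(n\theta)^{-2}\theta^{3/2}$. That is exactly the target size, so no cancellation with $R_{n,3}$ or $I_{n,1}$ is needed. The same quantity reappears in your Bessel route as $N^{-2}J_0(0)\theta^{-1/2}$, where you implicitly accept it.

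The remaining terms close with the Legendre bound $|P_n(\cos t)|\le (nt)^{-1/2}$:
\begin{itemize}
\item The boundary term at $\theta-1/n$ is at most $n^{-2}(\theta-1/n)^{-1/2}\le\frac{\sqrt5}{2}n^{-2}\theta^{-1/2}$.
\item The piece with $\frac12(\theta-t)^{-3/2}\cos t$ is at most $\frac{1}{2n^{5/2}}\int_0^{\theta-1/n}t^{-1/2}(\theta-t)^{-3/2}\,dt\le n^{-2}\theta^{-1/2}$, using the antiderivative $\frac{2}{\theta}\sqrt{t/(\theta-t)}$.
\item The piece with $\sin t\,(\theta-t)^{-1/2}$ is $O(n^{-5/2}\theta)$. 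This is $\lesssim n^{-2}\theta^{-1/2}$ because $\theta\le\pi/2$ and $n\theta\ge 5$.
\end{itemize}
Near the endpoint, use \eqref{C.3} directly rather than trying to derive it from \eqref{C.1}, which is only a Legendre estimate. Combine it with $\sin t\cos t\le t$ and $\int_{\theta-1/n}^{\theta}t^{-1/2}(\theta-t)^{-1/2}\,dt=2\arctan\bigl((n\theta-1)^{-1/2}\bigr)$. The two pieces give $37.9275$ and $4.7634$, which add to the stated $42.6909$.

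The route you actually commit to does not amount to a proof. As you note, the $O(n^{-1})$ remainder in Proposition~\ref{lem3.1} only yields a contribution of order $n^{-1}\theta^{3/2}$, which is too large. The paper provides no explicit second-order (Hilb-type) remainder for $P^{(1,1)}_{n-1}$ with constants, so the estimate rests on an ingredient you have neither stated precisely nor verified.

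The main-term substitution is also off. Under $t=x/N$, $t\,j_1(Nt)\,dt$ becomes $N^{-2}x\,j_1(x)\,dx=N^{-2}J_1(x)\,dx$, not $N^{-2}x^2j_1(x)\,dx$. Finally, the specific constant $42.6909$ is the output of the elementary split-and-integrate-by-parts argument above. It cannot be recovered by ``summing the constants'' of a different decomposition whose constants you never compute.
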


\begin{proof}
 
	We break the integral $\int_0^\t$ into two parts $\int_{\t-n^{-1}}^\t+\int_{0}^{\t-n^{-1}}$.
	For the first part, we use  estimate    \eqref{C.3}, $\arctan x\leq x$, and  the condition $n\t\geq 5$ to get
	\begin{align*}
	\Bl|\f {1}{2n}  \int_{\t-\f1n}^\t P_{n-1}^{(1,1)}  (\cos t) \f{\sin t \cos t}{\sqrt{\t- t}} \, d t\Bl|  &\leq  {\f {2.821\times \pi^{\f32}} {2n(n-1)^{\f12}}} \int_{\t-\f1n}^\t (\t-t)^{-\f12} t^{-\f12}\, dt\\
	&\leq {\f {2.821\times \pi^{\f32}} {2n(n-1)^{\f12}}}\cdot 2\arctan\(\f{\f1n}{\t-\f1n}\)^{\f12}\\
	& \leq{\f {2.821\times \pi^{\f32}} { n(n-1)^{\f12}}} \cdot \(\f{\f1n}{\t-\f1n}\)^{\f12}\\ &\leq 2.821\times \pi^{\f32}\times \(\f{5}{4-\pi}\)^{\f12}n^{-2}\t^{-\f12}\\ &\leq 37.9275 \times n^{-2}\t^{-\f12}.
	\end{align*}
 
 	For the second part,  we use    \eqref{1-3}, integration by parts,    triangle inequality,  \eqref{Legendre estimates},  and $n\t\geq 5$ to obtain 
	\begin{align*}
	\Bl| \f {1}{2n} &\int_{0}^{\t-\f1n}  P_{n-1}^{(1,1)} (\cos t) \f{\sin t \cos t}{\sqrt{\t- t}} \, d t \Bl|=  \Bl|  \f{-1}{ n(n+1)}  \int_0^{\t-\f1n} (\t-t)^{-\f12}(\cos t)   \, d \Bl( P_n(\cos t)\Br) \Br|\\ &
	\leq \f1{ n^2} \Bl|P_n(\cos(\t-\f1n))\f{\cos(\t-\f1{n})}{\sqrt{\f1n}}\Br| +  \f1{ n^2}\f{1}{\t^{\f12}} +   \f1{2n^2}\Bl|\int_0^{\t-\f1n}P_n(\cos  t)(\t-t)^{-\f32} (\cos t)dt \Br| \\ &+ \f1{  n^2}\Bl|\int_0^{\t-\f1n}P_n(\cos  t)(\t-t)^{-\f12} (\sin t)dt \Br|  \\ &\leq
  \f1{n^2}   \f{1}{n^\f12(\t-\f1n)^\f12} \f{1}{\sqrt{\f1n}} 
 +\f1{ n^2\t^{\f12}} + \f{1}{2n^2}\int_{0}^{\t-\f1n}\f{1}{n^\f12t^\f12}(\t-t)^{-\f32}dt+ \f{1}{ n^2} \int_{0}^{\t-\f1n} \f{1}{n^\f12t^\f12} (\t-t)^{-\f12} t  dt \\& \leq \f{\sqrt{5}}{2}\f{1}{n^2\t^{\f12}} + \f 1{ n^2\t^{\f12}} +   \f{1}{ n^2\t^{\f12}} +  \f{2(\f\pi2)^2}{3n^2\t^\f12} \leq \f{4.7634}{n^2\t^{\f12}}.
	\end{align*} 
 Therefore, combining the above two parts, we get the desired estimate.
  
\end{proof}

\begin{lem}\label{lem2.5}

If $n \t\ge 5$ and $\t\in (0, \f\pi 2]$, then 
  	\begin{align*}	  |	R_{n,3}(\t)| =\left|  \int_{0}^\t P_n(\cos t)\f {\sin t}{ \sqrt{\t-t}}\, d t 
		\  \right| \leq \f { \sqrt{2}}{n\t} \sin (N\t)  \Bl( \f {\sin \t}{\t}\Br)^{\f12}\t^\f32				
		+   C \times (n\t)^{-\f32} \t^{\f32},	 
	\end{align*} where $N=n+\f12$, and 	 the constant $C$ can be taken to be  $92.1237.$
\end{lem}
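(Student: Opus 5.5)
We replace $P_n(\cos t)$ by its Bessel approximation \eqref{C.1} and compute the resulting model integral exactly; the main term of the lemma then appears as an oscillatory boundary term. Concretely, on $[0,\t]\subset[0,\f\pi2]$ we write
$$P_n(\cos t)=\Bl(\f t{\sin t}\Br)^{\f12}J_0(Nt)+E_n(t),\qquad |E_n(t)|\le \f{0.1711}{n}.$$
This splits $R_{n,3}(\t)$ into a main part $M=\int_0^\t J_0(Nt)\,(t\sin t)^{\f12}(\t-t)^{-\f12}\,dt$ and an error part $\int_0^\t E_n(t)\sin t\,(\t-t)^{-\f12}dt$.

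\emph{Error part.} The crude bound $|E_n|\le 0.1711/n$ gives only $O(n^{-1}\t^{3/2})$, which is too weak against $(n\t)^{-3/2}\t^{3/2}$. We therefore combine it, piecewise, with the finer bound $|E_n(t)|\le C\,t^{2}$ valid for $nt\le 1$ (from the Taylor expansion near $t=0$), and with the weighted bound $|E_n(t)|\lesssim t^{1/2}n^{-3/2}$ on $nt\ge1$ (the standard Hilb-type remainder). With $\sin t\le t$, the Beta integral $\int_0^\t t^{3/2}(\t-t)^{-1/2}dt\asymp\t^2$ then yields a contribution $\lesssim n^{-3/2}\t^{3/2}\cdot(\text{bounded})$. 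If only \eqref{C.1} is available, we instead handle this part by integrating by parts once in $t$ using $P_n(\cos t)\sin t=\f1{n(n+1)}\f{d}{dt}\bigl[\ldots\bigr]$, via \eqref{1-3}. That gains a factor $n^{-2}$ at the cost of the singularity $(\t-t)^{-3/2}$, which is treated as in Lemma \ref{lem2.4} by splitting off $[\t-\f1n,\t]$ and using \eqref{Legendre estimates}, i.e.\ $|P_n(\cos t)|\le (nt)^{-1/2}$.

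\emph{Main part.} We set $h(t)=(t\sin t)^{1/2}$, which is smooth with $h(t)\approx t$, and insert the asymptotic expansion $J_0(z)=\sqrt{2/(\pi z)}\bigl(\cos(z-\f\pi4)+O(z^{-1})\bigr)$ for $Nt\ge1$. The region $Nt\le 1$ contributes $O(N^{-2}\t^{-1/2})$, which is within the $C(n\t)^{-3/2}\t^{3/2}$ budget because $n\t\ge5$. The oscillatory integral $\int \cos(Nt-\f\pi4)\,(\sin t)^{1/2}(\t-t)^{-1/2}\,dt$ is dominated by the endpoint singularity at $t=\t$. Using $\int_0^\infty \cos(Ns+\phi)\,s^{-1/2}ds=\sqrt{\pi/N}\cos(\phi+\f\pi4)$ with $s=\t-t$, we obtain
$$\sqrt{\tfrac{2}{\pi N}}\,(\sin\t)^{1/2}\sqrt{\tfrac{\pi}{N}}\,\cos\bigl(N\t-\tfrac{\pi}{4}-\tfrac{\pi}{4}\bigr)=\f{\sqrt2}{N}\sin(N\t)(\sin\t)^{1/2},$$
and this equals the stated leading term $\f{\sqrt2}{n\t}\sin(N\t)(\f{\sin\t}\t)^{1/2}\t^{3/2}$ up to replacing $N$ by $n$, at a cost $O(n^{-2})$. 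The remaining pieces are the nonstationary contribution of $t$ away from $\t$, handled by one integration by parts giving $O(N^{-3/2}\t^{-1/2}\cdot\ldots)$; the variation of the amplitude $(\sin t)^{1/2}t^{-1/2}h(t)$; and the tail of the $s$-integral beyond $\t$. Each of these is bounded by $(N\t)^{-3/2}\t^{3/2}$ times an explicit constant.

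\emph{Main obstacle.} The delicate step is controlling the remainders uniformly near both endpoints with explicit constants. Near $t=0$ the Bessel asymptotics fail, and near $t=\t$ the weight is singular. We plan to split at $t=1/N$ and at $t=\t-1/N$ exactly as in Lemma \ref{lem2.4}, use $|J_0(z)|\le \sqrt{2/(\pi z)}$ together with the explicit remainder bounds for $J_0$ (presumably collected in Appendix~B), and then add up the numerical constants to reach $C=92.1237$.
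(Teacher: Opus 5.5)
Your main route works and differs genuinely from the paper's, but three points need correcting.

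\emph{Comparison of routes.} You approximate $P_n(\cos t)$ on all of $[0,\theta]$ by $(t/\sin t)^{1/2}J_0(Nt)$, using Hilb's formula with its two-regime remainder. You then pass to the large-argument asymptotics of $J_0$ and evaluate the endpoint integral at $t=\theta$ by hand.

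The paper never introduces Bessel functions in this lemma. It splits with the $C^1$ cutoff $\eta(t/\theta)$:
\begin{itemize}
\item On $[0,\theta/2]$ it integrates by parts using the exact antiderivative of $P_n(\cos t)\sin t$ supplied by \eqref{1-8}. The cutoff and the factor $\sin^2 t$ remove the boundary terms. Bounding $P_{n-1}^{(1,1)}$ by \eqref{C.3} then gives $O(n^{-3/2})$.
\item On $[\theta/4,\theta]$ it inserts Hobson's trigonometric expansion \eqref{propA.4}, whose remainder is already $O((nt)^{-3/2})$ with an explicit constant. It then applies Copson's endpoint formula, Proposition~\ref{lem-asym2} with $v=1$ and $\phi(t)=\eta(t/\theta)(\sin t)^{1/2}$ (which vanishes at $t=\theta/4$). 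This extracts $\Gamma(\frac12)N^{-1/2}e^{iN\theta-i\pi/4}(\sin\theta)^{1/2}$ with an explicit remainder.
\end{itemize}

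Your main term $\frac{\sqrt2}{N}\sin(N\theta)(\sin\theta)^{1/2}$ agrees with this. The sizes you assign to the remaining pieces are all $O(n^{-3/2})$, as required. One detail: the boundary term at your cut $t=1/N$ is $O(N^{-2}\theta^{-1/2})$, not merely $O(N^{-3/2}\theta^{-1/2})$, and this matters as $\theta\to0$.

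The price of your route is two stacked approximations. It also needs an explicit form of Hilb's remainder: $|E_n(t)|\le Ct^2$ for $nt\le 1$ and $|E_n(t)|\le Ct^{1/2}n^{-3/2}$ for $nt\ge1$. The paper does not supply this; it has only the uniform bound \eqref{C.1}, which you rightly identify as too weak. Using \eqref{propA.4} directly away from $t=0$ would remove the need for both Hilb's formula and the $J_0$ asymptotics.

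\emph{First correction: the constant.} The number $92.1237$ is just the sum of the constants produced by the paper's particular splitting. These are the \eqref{C.3}-based bound on $[0,\theta/2]$, Hobson's remainder, and Copson's remainder $13.0552/N$. Your bookkeeping will produce some other explicit constant, not this one. That is harmless for the later use, but it does not give the lemma with the stated $C$. Also, Appendix B contains no Bessel remainder bounds, only Copson's endpoint expansions.

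\emph{Second correction: the fallback fails.} The identity \eqref{1-8} integrates $P_n(\cos t)\sin t$, not $E_n(t)\sin t$. Suppose instead you apply it to the full integrand and split off $[\theta-\frac1n,\theta]$ as in Lemma~\ref{lem2.4}. That short interval then contributes a quantity of size $n^{-1}\theta^{1/2}$, because it is exactly where the main term lives. So it cannot be bounded in absolute value and absorbed into $C(n\theta)^{-3/2}\theta^{3/2}=Cn^{-3/2}$. The same caveat applies to the split at $\theta-\frac1N$ in your last paragraph: that piece must be evaluated, not estimated.

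\emph{Third correction: the sign.} What your argument proves, like the paper's, is
$R_{n,3}(\theta)=\frac{\sqrt2}{n}\sin(N\theta)(\sin\theta)^{1/2}+O(n^{-3/2})$. This gives the signed bound $R_{n,3}(\theta)\le \frac{\sqrt2}{n}\sin(N\theta)(\sin\theta)^{1/2}+Cn^{-3/2}$. The inequality with $|R_{n,3}(\theta)|$ on the left cannot hold: for fixed $\theta$ and large $n$ with $\sin(N\theta)$ near $-1$, the right-hand side is negative. Only the signed upper bound is used later, where $R_{n,3}$ enters with a minus sign. So you should state and prove that version rather than passing to absolute values.
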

 
\begin{proof}
	 We will need  a function $\eta\in C^1(\RR)$  where \begin{eqnarray}\eta(x)=
		\begin{cases}
			1, &\f12\leq x  \cr \f12\cos(4\pi x)+\f12, &\f14 \leq x < \f12\cr 0, &x<\f14\end{cases}.\notag
	\end{eqnarray} 
   Then, \begin{equation*}
	\int_{0}^\t P_n(\cos t)\f {\sin t}{ \sqrt{\t-t}}\, d t =
	 \int_{0}^{\f \t2} (1-\eta(\t^{-1}t))P_n(\cos t)\f {\sin t}{ \sqrt{\t-t}}\, d t+\int_{\f \t4}^{\t} \eta(\t^{-1}t) P_n(\cos t)\f {\sin t}{ \sqrt{\t-t}}\, d t.
	\end{equation*} For the first integral, by \eqref{1-8}  and  integration by parts, we have \begin{align*}
	\int_0^{\f\t2} (1-\eta(\t^{-1}t)) P_n(\cos t) \f{\sin t}{\sqrt{\t-t}} \, dt 
	& =- \int^{\f\t2}_{0}  (1-\eta(\t^{-1}t))\f{1}{\sqrt{\t-t}}d\Bl(\f{\sin^2 t}{2n}P_{n-1}^{(1,1)}(\cos t)\Br)\\
	& =- \f1{4n} \int_0^{\f \t2} P_{n-1}^{(1,1)} (\cos t)(\sin  t)^2 (\t-t)^{-\f32}g(t)dt,
	\end{align*} where $g(t) := 1-\eta(\t^{-1}t) -2\t^{-1} (\t-t) \eta'(\t^{-1}t).$
	Using \eqref{C.3} and noting that $|g(t)|\leq 4\pi+2$, we get 
	\begin{align} 
	\Bl|&	\int_0^{\f\t2}  (1-\eta(\t^{-1}t)) P_n(\cos t) \f{\sin t}{\sqrt{\t-t}}  dt \Br| \notag \\& =\Bl| \f 1{4n} \int_0^{\f \t2} P_{n-1}^{(1,1)} (\cos t)(\sin  t)^2 (\t-t)^{-\f32}g(t) \, dt \Br|\notag \\
	&\leq  \f {2.821\cdot \pi^{\f32}}{4n}  \int_0^{\f \t2}    \f 1{ (n-1)^{1/2}t^{3/2}} ( \sin t  )^2  (\t-t)^{-\f 32} |g(t)|dt\notag\\
	&\leq  \f {2.821 \cdot \pi^{\f32} \cdot (4\pi+2)}{4n(n-1)^{1/2}}  \int_0^{\f \t2}    \(\f {\sin t}{t}\)^{\f32}  (\sin t)^{\f12}    (\t-t)^{-\f 32}  dt\notag\\
	&\leq  \f {2.821 \cdot \pi^{\f32} \cdot(4\pi+2)}{4n(n-1)^{1/2}}   \(\f\t2\)^{\f12}    \int_0^{\f \t2}      (\t-t)^{-\f 32}  dt\notag\\
			&\leq  \f {2.821\cdot\pi^{\f32}\cdot (4\pi+2)\cdot \sqrt{2}}{4n ^{3/2}},\label{eqn5.4}
	\end{align} where the last inequality follows by $n\geq 2$ when $\t$ is small. 

Next, we estimate the second integral. We claim that \begin{align}\label{2-1}
	\f 1{\t^{\f 32}}  \int_{\f\t4} ^\t&  P_n(\cos t)\f{\eta (\t^{-1}t ) \sin t }{\sqrt{\t-t}} \, dt 
	\leq \f { \sqrt{2}}{n\t} \sin (N\t)  \Bl( \f {\sin \t}{\t}\Br)^{\f12}				
	+ \Bl(\f{1}{(2\pi)^{\f12}(\f2\pi)^{\f32}}+10.4160 \Br)(n\t)^{-3/2},	
	\end{align}
	where $N=n+\f12$. 
	To see this, we use   \eqref{propA.4}  
	to obtain    
	\begin{align*}
	\int_{\f\t4} ^\t&  P_n(\cos t)\f{\eta (\t^{-1}t ) \sin t }{\sqrt{\t-t}} \, dt\\
	&\leq 2^{1/2} \pi^{-\f12} n^{-\f12} \int_{\t/4} ^\t \eta (\t^{-1}t)  \cos (Nt -\f \pi4)(\sin t)^{\f12} (\t-t)^{-\f12}\, dt
	+\f{1}{(2\pi)^{\f12}(\f2\pi)^{\f32}}      (n\t)^{-\f32}\t^\f32 .
	\end{align*}
	We then write 
	\begin{align*}
	2^{1/2}& \pi^{-\f12} n^{-\f12} \int_{\t/4} ^\t \eta (\t^{-1}t)  \cos (Nt -\f \pi4)(\sin t)^{\f12} (\t-t)^{-\f12}\, dt\\&= 2^{- 1/2} \pi^{-\f12} n^{-\f12} \int_{\t/4} ^\t \eta (\t^{-1}t) \Bl[ e^{i(Nt -\f \pi 4)} +e^{i(-Nt +\f \pi4)}\Br](\sin t)^{\f12} (\t-t)^{-\f12}\, dt\\
	&=:I_n^{+} + I_n^{-}.
	\end{align*}
	Using Proposition \eqref{lem-asym} and \eqref{lem-asym2} with $\phi(t)=\eta(\frac{t}{\t})(\sin t)^{\f12}$,  
	$\t\in(0,\f\pi 2),$  $\a=\f{\t}4$, $\b=\t$, $\l=\f12$, $\mu=\f12$, and  $v=1$,  we  have
	\begin{align*} 
		\int_{\f\t4} ^\t \eta (\t^{-1}t)    (\sin t)^{\f12} e^{iNt }(\t-t)^{-\f12}\, d t 	&\leq   \f{\Gamma(1/2)}{N^{\f 12}}e^{iN\t-\f 12\cdot \f 12\pi i}  \eta (1) (\sin \t)^{\f 12} + \f{13.0552}N,
	\end{align*}
	\begin{align*} 
		\int_{\f\t4} ^\t \eta (\t^{-1} t)    (\sin t)^{\f12} e^{-iNt} (\t-t)^{-\f12}\, dt 
		&\leq   \f{\Gamma(1/2)}{N^{\f 12}}e^{-iN\t+\f 12\cdot \f 12\pi i}  \eta (1) (\sin \t)^{\f 12} + \f{13.0552}N.
	\end{align*}
Therefore, we obtain that  
	\begin{align*}
	I_n^{+} &=2^{-\f12} \pi^{-\f12} n^{-\f12} e^{-\f {\pi i}4} \int_{\t/4} ^\t \eta (\t^{-1}t)  e^{iNt } (\sin t)^{\f12} (\t-t)^{-\f12}\, d t\\
	&\leq 2^{-\f12} \pi^{-\f12} n^{-\f12} e^{-\f {\pi i}4} \Big[\f{\Gamma(1/2)}{N^{\f 12}}e^{iN\t-\f 12\cdot \f 12\pi i}  \eta (1) (\sin \t)^{\f 12} + 13.0552\times N^{-1}  \Big] \\
		&\leq 2^{-\f12}  n^{-1}e^{iN\t} e^{-\f \pi2 i} (\sin \t)^{\f12} +  e^{-\f \pi2 i} \cdot 5.2080 \times (n\t)^{-\f 32} \t^{\f 32},
	\end{align*}
	and  
	\begin{align*}
	I_n^{-} &=2^{-\f12} \pi^{-\f12} n^{-\f12} e^{\f {\pi i}4} \int_{\t/4} ^\t \eta (\t^{-1} t)  e^{-iNt} (\sin t)^{\f12} (\t-t)^{-\f12}\, dt\\
	&\leq 2^{-\f12}  n^{-1} e^{\f {\pi i}2}  e^{-i N \t } (\sin \t)^{\f12}				
	+  e^{-\f \pi2 i} \cdot  5.2080 \times (n\t)^{-\f 32} \t^{\f 32}.
	\end{align*}
	It follows that 
	\begin{align*}
	|I_n^+ +I_n^-| \leq 2^{\f12}  n^{-1} \sin (N\t)  (\sin \t)^{\f12}				
	+ 10.4160\times  (n\t)^{-\f 32} \t^{\f 32}.
	\end{align*}
	This implies \eqref{2-1}.
 
 By equations \eqref{eqn5.4} and \eqref{2-1}, we thus conclude the desired estimate for $R_{n,3}(\t)$. 
\end{proof}

\begin{lem}\label{lem2.6}
	For $n\t\ge 5$ and $\t\in (0,\f \pi2]$,  we have that  
	\begin{align*}
	 	I_{n,1} (\t) = \f2{n(n+1) } \int_0^\t \Bl[ 1-P_{n}(\cos t)\Br]\f { \sqrt{\t-t}\cos t}{\sin^2 t} dt \ge \f{3}{2n}\sqrt{\t}- 30.1067\times (n\t)^{-\f32}\t^{\f32}.
	\end{align*}  
 \end{lem}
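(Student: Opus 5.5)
We need a lower bound for
$$I_{n,1}(\t)=\f2{n(n+1)}\int_0^\t \bigl[1-P_n(\cos t)\bigr]\f{\sqrt{\t-t}\cos t}{\sin^2 t}\,dt.$$
The plan is to split off the constant $1$ from $P_n$. The ``$1$'' part produces a large positive main term, and the $P_n$ part is an oscillatory error of size $(n\t)^{-3/2}\t^{3/2}$. Since $1-P_n(\cos t)\approx n(n+1)t^2/4$ near $0$, the integrand is integrable at $t=0$, but the two pieces separately are not. We therefore cut at $t=1/n$ (admissible because $n\t\ge5$) and write $\int_0^\t=\int_0^{1/n}+\int_{1/n}^\t$.

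\emph{The piece on $[0,1/n]$.} Here the integrand is nonnegative, because $1-P_n\ge0$ and $\cos t>0$. We could simply drop it. A cleaner alternative is to bound it below by its leading term using $1-P_n(\cos t)\ge c\,n^2t^2$ for $nt\le1$, which follows from the Bessel comparison \eqref{C.1} together with $1-J_0(x)\ge x^2/4-x^4/64$. On this range $\sqrt{\t-t}=\sqrt\t\,(1+O(1/(n\t)))$, so the piece contributes roughly $\f{2}{n^2}\sqrt\t\cdot c\,n$, which is of order $\sqrt\t/n$.

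\emph{The piece on $[1/n,\t]$.} Split it as
$$\f2{n(n+1)}\int_{1/n}^\t\f{\sqrt{\t-t}\cos t}{\sin^2t}\,dt\;-\;\f2{n(n+1)}\int_{1/n}^\t P_n(\cos t)\f{\sqrt{\t-t}\cos t}{\sin^2t}\,dt.$$
For the first integral, use $\sin t\le t$ and $\cos t\ge1-t^2/2$, and compute $\int_{1/n}^\t \sqrt{\t-t}\,t^{-2}\,dt$ exactly. Its leading part is $\sqrt\t\, n$, minus correction terms of order $\t^{-1/2}\log(n\t)$; since $n\t\ge5$, these are absorbed into the error. Combined with the $[0,1/n]$ piece, the constants should assemble to $\f{2}{n^2}\cdot\f{3n}{4}\sqrt\t=\f{3}{2n}\sqrt\t$. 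The value $\f34$ comes from the exact identity $\int_0^\infty(1-J_0(x))x^{-2}\,dx=1$, suitably combined; I would obtain the constant $\f32$ by passing to the limit form $\f{2}{n^2}\int_0^\infty(1-J_0(Nt))t^{-2}\sqrt\t\,dt$ and bounding the remainders explicitly.

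For the oscillatory second integral, use the uniform Legendre estimate \eqref{Legendre estimates}, $|P_n(\cos t)|\le (nt)^{-1/2}$ up to a constant, or better the asymptotic \eqref{propA.4}. With the crude bound it contributes at most $\f{2}{n^2}\int_{1/n}^\t n^{-1/2}t^{-5/2}\sqrt\t\,dt\lesssim n^{-1}\sqrt\t$, which is of the same order as the main term. That is too weak, so the oscillation must be used. Concretely, write $P_n\sin t=-\f{1}{2n}\bigl(\sin^2t\,P_{n-1}^{(1,1)}\bigr)'$ via \eqref{1-8}, integrate by parts once, and bound $P_{n-1}^{(1,1)}$ by \eqref{C.3}. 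This gains a factor $(nt)^{-1}$ and yields $O(n^{-2}\cdot n^{1/2}\cdot n^{3/2}\t^{-1/2}\cdot(n\t)^{-1})$-type terms, that is, $C(n\t)^{-3/2}\t^{3/2}$. The boundary term at $t=1/n$ is handled with \eqref{C.1}, and the derivative falling on $\sqrt{\t-t}$ near $t=\t$ is handled as in Lemma~\ref{lem2.4}, by isolating $[\t-1/n,\t]$.

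The main obstacle is pinning down the exact main constant $\f32$ while keeping all remainders explicitly of size $(n\t)^{-3/2}\t^{3/2}$. The logarithmic and $[0,1/n]$ contributions must be shown to fit inside that error, with numerics matching $30.1067$. I would first try to get the main term from $\int_0^\infty(1-J_0(x))x^{-2}\,dx$ via \eqref{C.1}, and then tally each remainder constant using $n\t\ge5$ and $\t\le\pi/2$.
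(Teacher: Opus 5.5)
There is a genuine gap. With the cut at $t=1/n$, the $P_n$-part on $[1/n,\t]$ is not an error term, and no integration by parts can make it one. For $t$ near $1/n$ one has $P_n(\cos t)\approx J_0(Nt)$ with $Nt\approx 1$. So $P_n$ is still in the first positive lobe of $J_0$ and nothing oscillates, while the weight $\sqrt{\t-t}\cos t/\sin^2t\approx\sqrt{\t}\,t^{-2}$ puts essentially all its mass there. Hence
\begin{equation*}
\int_{1/n}^\t P_n(\cos t)\f{\sqrt{\t-t}\cos t}{\sin^2 t}\,dt\approx\sqrt{\t}\,n\int_1^\infty\f{J_0(x)}{x^2}\,dx=\sqrt{\t}\,n\int_0^1\f{1-J_0(x)}{x^2}\,dx\approx0.245\,\sqrt{\t}\,n .
\end{equation*}
After the prefactor $\f{2}{n(n+1)}$ this is about $0.49\sqrt{\t}/n$, a fixed fraction of the main term. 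By contrast, $(n\t)^{-3/2}\t^{3/2}$ is smaller than the main term by a factor $(n\t)^{-1/2}$. The ``$1$''-part alone gives $\approx\f2n\sqrt{\t}$, and the discarded $[0,1/n]$-piece adds back the same $\approx0.49\sqrt{\t}/n$ that the $P_n$-part removes.

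Your integration by parts via \eqref{1-8} ``gains $(nt)^{-1}$'', but that factor is $1$ at the cut point. The boundary term at $t=1/n$ is $-\f{\sqrt{\t-1/n}\cos(1/n)}{2n\sin(1/n)}P_{n-1}^{(1,1)}(\cos\f1n)\approx-\f{\sqrt{\t}}{2}P_{n-1}^{(1,1)}(\cos\f1n)$, with $P_{n-1}^{(1,1)}(\cos\f1n)\approx2J_1(1)\,n$. Bounding it by \eqref{C.3} gives about $7.9\sqrt{\t}\,n$, i.e.\ roughly $16\sqrt{\t}/n$ after the prefactor, ten times the target $\f3{2n}\sqrt{\t}$. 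The integral left after the integration by parts is of the same order.

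So the constants cannot ``assemble''. If you drop the $[0,1/n]$-piece, you must show the $P_n$-part is at most about $\f14\sqrt{\t}\,n$, when its true size is $0.245\sqrt{\t}\,n$. If you keep it, your lower bound there via \eqref{C.1} breaks down near $t=0$: the additive error $0.1711/n$ against the weight $t^{-2}$ is not integrable. Also, $\f34$ does not come from $\int_0^\infty(1-J_0(x))x^{-2}\,dx=1$. The true main term is $\f2n\sqrt{\t}$, and the paper simply uses $\f{n}{n+1}\ge\f34$.

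The missing idea is to never separate $1$ from $P_n$. The paper writes $\f{\cos t}{\sin^2t}=\f1{t^2}+O(1)$ and $\sqrt{\t-t}=\sqrt{\t}-\f{t}{\sqrt{\t}+\sqrt{\t-t}}$. The resulting corrections only use $0\le1-P_n\le2$, plus \eqref{1-5} on $[0,1/n]$. They cost $O(\t^{3/2}+\t^{-1/2}\log(n\t))$, which fits into the error term after the prefactor.

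The main term $\sqrt{\t}\int_0^\t\f{1-P_n(\cos t)}{t^2}\,dt$ is then bounded below through the positive cosine expansion \eqref{1-5}. For $n=2m$, $1-P_n(\cos t)=\f4\pi\sum_{j=1}^m c_j\sin^2(jt)$ with $c_j=\f{\Gamma(m-j+\f12)\Gamma(m+j+\f12)}{\Gamma(m-j+1)\Gamma(m+j+1)}>0$. Combine this with $\int_0^\t\f{\sin^2(jt)}{t^2}\,dt\ge\f\pi2 j-\f1\t$ and Gautschi's inequality ($c_j$ is of size $(m^2-j^2)^{-1/2}$). This gives $\int_0^\t\f{1-P_n(\cos t)}{t^2}\,dt\approx n$.

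Positivity of the $c_j$ is what lets the paper avoid having to detect the cancellation at $t\sim 1/n$ that your split creates. To rescue your route, you would have to evaluate $\int_0^{K}(1-J_0(x))x^{-2}\,dx$ with explicit errors on $[0,K/n]$, which the expansion does for free.
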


\begin{proof}
	Firstly, we claim the following lower estimate. 
	\begin{equation}\label{2-3}
	I_{n,1}(\t) \geq  \f34  \f{2\sqrt{\t} }{n^2}  \int_0^\t\f {1-P_n(\cos t)}{t^2}\, dt-\f3{2n^2}  \t^{-\f12} \log(n\t) \left( \f4\pi(4+\sqrt{\f\pi2})+4 \right).  	  	 	
	\end{equation} Indeed, since that $$\frac{\cos t}{\sin^2 t}=\frac{1-2\sin^2\f t2}{\sin^2 t}=\f{-2\sin^2\f t2}{\sin^2 t}+  \f1{t^2}\(  \f{t^2}{\sin^2 t}-1\)+\f{1}{t^2},$$
  we may   rewrite the integral $I_{n,1}(\t)$ to 	
	\begin{align*} 
\f {n(n+1)}2I_{n,1}(\t) 	=&- \int_0^\t \Bl[ 1-P_{n}(\cos t)\Br]\f {2 \sqrt{\t-t}\sin^2\f t2}{\sin^2 t}\, d t\\
	&+ \int_0^\t \f{ 1-P_{n}(\cos t)}{t^2}\sqrt{\t-t} \Bl[\f {t^2 }{\sin^2 t}-1\Br]\, dt+
	\int_0^\t\sqrt{\t-t}  \f{ 1-P_{n}(\cos t)}{t^2}\, dt. 
	\end{align*}	
Then we will prove that
		\begin{align} 
		\f {n(n+1)}2I_{n,1}(\t)  & \geq \sqrt{\t}\int_0^\t \f {1-P_n(\cos t)}{t^2}\, dt -\int_0^\t \f {1}{\sqrt{\t}+\sqrt{\t-t}}  \f {1-P_n(\cos t)}{t}\, dt -  2(\f2\pi)^2\t^\f32\label{eqn5.10}  \\
	\geq& \sqrt{\t}\int_0^\t\f {1-P_n(\cos t)}{t^2}\, dt- \left( \f4\pi(4+\sqrt{\f\pi2})+2 \right) \t^{-\f12} \log(n\t) - 2(\f2\pi)^2\t^\f32 \label{eqn5.11} \\
		\geq& \sqrt{\t}\int_0^\t\f {1-P_n(\cos t)}{t^2}\, dt-  \t^{-\f12} \log(n\t)  \left( \f4\pi(4+\sqrt{\f\pi2})+4 \right).\label{eqn5.12} 
	\end{align}
	The first  inequality \eqref{eqn5.10} follows that 
	\begin{align*}
	\Big| &- \int_0^\t \Bl[ 1-P_{n}(\cos t)\Br]\f {2 \sqrt{\t-t}\sin^2\f t2}{\sin^2 t}\, d t	+ \int_0^\t \f{ 1-P_{n}(\cos t)}{t^2}\sqrt{\t-t} \Bl(\f {t^2 }{\sin^2 t}-1\Br)\, dt+\\	&\int_0^\t\sqrt{\t-t}  \f{ 1-P_{n}(\cos t)}{t^2}\, dt -\sqrt{\t}\int_0^\t \f {1-P_n(\cos t)}{t^2}\, dt +\int_0^\t \f {1}{\sqrt{\t}+\sqrt{\t-t}}  \f {1-P_n(\cos t)}{t}\, dt \Big|\\
=&\Big|- \int_0^\t \Bl[ 1-P_{n}(\cos t)\Br]\f {2 \sqrt{\t-t}\sin^2\f t2}{\sin^2 t}\, d t	+ \int_0^\t \f{ 1-P_{n}(\cos t)}{t^2}\sqrt{\t-t} \Bl(\f {t^2 }{\sin^2 t}-1\Br)\, dt\Big|\\
 =& \Big|\int_0^\t \Bl[ 1-P_{n}(\cos t)\Br]\sqrt{\t-t} \Big (\f {-2  \sin^2\f t2}{\sin^2 t}+\f{1}{\sin^2 t}-\f{1}{t^2} \Big )\, d t\Big| \\
\leq& 2 \t^\f12 \int_0^\t  \Big|\f {-2  \sin^2\f t2}{\sin^2 t}+\f{1}{\sin^2 t}-\f{1}{t^2} \Big|\, d t
	\leq 2 \t^\f32 (\f2\pi  )^2.
	\end{align*}
	To show the second inequality \eqref{eqn5.11}, we need to split the integral $\int_0^\t \f {1}{\sqrt{\t}+\sqrt{\t-t}}  \f {1-P_n(\cos t)}{t}\, dt$ into two parts: $\int_{0}^{1/n}+\int_{1/n}^{\t}$. For the first part,  we  use   representation \eqref{1-5}. 
Without loss of generality, we may assume that $n=2m$ (The case when $n=2m-1$ can be treated similarly).  Then we can obtain   that 
\begin{align}\label{eqn5-7}  P_{2m}(\cos t) = \f 1{\pi} \Bl(\f{ \Ga(m+\f12)}{\Ga(m+1)}\Br)^2 +\f 2{\pi} \sum_{j=1}^m \f {\Ga(m-j+\f12)\Ga(m+j+\f12)}{\Ga(m-j+1) \Ga(m+j+1)}\cos (2j t).
	\end{align}	
By the identity 
\begin{align}\label{eqn5-8} 
1-\f 1\pi\(\f{\Gamma(m+\f 12)}{\Gamma(m+1)}\)^2=\f 2{\pi} \sum_{j=1}^m \f {\Ga(m-j+\f12)\Ga(m+j+\f12)}{\Ga(m-j+1) \Ga(m+j+1)},
\end{align} and  using Gautschi's inequality:  for $s\in(0,1)$, $x^{1-s}<\f{\Gamma(x+1)}{\Gamma(x+s)}< (x+1)^{1-s},$ 
we have
	\begin{align*}
		  \Big|\int_0^{1/n} &\f  {1}{\sqrt{\t}+\sqrt{\t-t}}   \f {1-P_n(\cos t)}{t}\, dt \Big|  \\& =  \f 4{\pi} \sum_{j=1}^m \f {\Ga(m-j+\f12)\Ga(m+j+\f12)}{\Ga(m-j+1) \Ga(m+j+1)}\int_{0}^{1/n} \f {1}{\sqrt{\t}+\sqrt{\t-t}}\f{\sin^2(jt)}{t}dt\\
		 &\leq\t^{-\f12}\f4{\pi}\Big[\sum_{j=1}^{m-1}\f{1}{\sqrt{m-j}\sqrt{m+j}}\int_{0}^{1/n}\f{\sin^2(jt)}{t}dt + \sqrt{\f{\pi}{2m}} \int_{0}^{1/n}\f{\sin^2(mt)}{t}dt \Big]\\
		 &\leq\t^{-\f12}\f4{\pi}\Big[\f1n  \sum_{j=1}^{m-1}\f{j}{\sqrt{m^2-j^2} }  + \sqrt{\f{\pi}{2}}  \Big]
		 \leq\t^{-\f12}\f4{\pi}\Big[\f1n  \int_{0}^m\f{x}{\sqrt{m^2-x^2} }dx  +\sqrt{\f{\pi}{2}} \Big]\\
		&\leq\t^{-\f12}\f4{\pi}\Big( 4 + \sqrt{\f{\pi}{2}} \Big)\leq
		\t^{-\f12}\log(n\t)\cdot\f4{\pi}\Big(4 + \sqrt{\f{ {\pi}}{2 }} \Big).
 	\end{align*}

While for the second part, we have 
	\begin{align*}
	\Big|\int_{1/n}^\t \f {1}{\sqrt{\t}+\sqrt{\t-t}} & \f {1-P_n(\cos t)}{t}\, dt \Big|\leq \f2{\sqrt{\t}}\int_{1/n}^\t\f1tdt=\f2{\sqrt{\t}}\log(n\t).
\end{align*}
Combining the two parts, we obtain the desired estimate  \eqref{eqn5.11}.

 To show   \eqref{2-3}, by \eqref{eqn5.12},  
 $n\t \geq 5$, and $\t\in(0,\f\pi2)$,  
 we then have \begin{align*} 
	 I_{n,1}(\t) 	\geq & \f{2 }{n(n+1)} \Bl[\sqrt{\t}\int_0^\t\f {1-P_n(\cos t)}{t^2}\, dt- \t^{-\f12} \log(n\t) \left( \f4\pi(4+\sqrt{\f\pi2})+4 \right)   \Big]\\
	 	\geq &\f34  \f{2\sqrt{\t} }{n^2}  \int_0^\t\f {1-P_n(\cos t)}{t^2}\, dt-\f3{2n^2}  \t^{-\f12} \log(n\t) \left( \f4\pi(4+\sqrt{\f\pi2})+4 \right) .  	 	
\end{align*} This claims \eqref{2-3}.

For the remaining, we will deal with the integral $\int_0^\t\f {1-P_n(\cos t)}{t^2}dt.$ Without loss of generality, letting $n=2m$ and applying the equations \eqref{eqn5-7} and \eqref{eqn5-8}, we  have \begin{align}\label{eqn5.8}
	& \f {2\sqrt{\t}} {n^2} \int_0^\t \f {1-P_n(\cos t)}{t^2}\, dt=\f 8{\pi}\f {\sqrt{\t}}{n^2} \sum_{j=1}^m \f {\Ga(m-j+\f12)\Ga(m+j+\f12)}{\Ga(m-j+1) \Ga(m+j+1)}\int_0^\t \f { \sin^2 (jt)}{t^2}\, dt.
	\end{align}
	Note that  for $j\ge 1$, 
	\begin{align}\label{eqn5.9}
	\int_0^\t& \f {\sin^2 (jt)}{t^2}\, dt =j\int_0^{j\t} \f {\sin^2 t}{t^2}\, dt\ge j \int_0^\infty \f {\sin^2 t}{t^2}\, dt -\f 1\t
	=\f \pi 2 j -\f 1\t.
	\end{align} Thus, by using \eqref{eqn5.8}, \eqref{eqn5.9}, and Gautschi's inequality:  for $s\in(0,1)$, $x^{1-s}<\f{\Gamma(x+1)}{\Gamma(x+s)}< (x+1)^{1-s},$  we have for $n\t\geq 5$,
	\begin{align}
	\f {2\sqrt{\t}} {n^2} \int_0^\t \f {1-P_n(\cos t)}{t^2}\, dt&
	\geq \f {4\sqrt{\t}}{n^2} \sum_{j=1}^m \f {j\Ga(m-j+\f12)\Ga(m+j+\f12)}{\Ga(m-j+1) \Ga(m+j+1)} - 4  n^{-2}\t^{-\f12}\notag\\
	&\geq \f {4\sqrt{\t}}{n^2} \sum_{j=1}^{m-1} \f {j}{\sqrt{m^2-j^2}}-  { 2\sqrt{\pi}\sqrt{\f\pi2}}n^{-\f32} - 4  n^{-2}\t^{-\f12}\notag\\
	&\ge \f {4\sqrt{\t}}{n^2}\int_0 ^{m-1}\f y { (m^2 -y^2)^{\f12} }\, dy   - { 2\sqrt{\pi}\sqrt{\f\pi2}}n^{-\f32}  -    \f{4}{\sqrt{5}} n^{-\f32}\notag\\
	&\ge \f {4\sqrt{\t}}{n^2}(m-\sqrt{2m-1}) -\Bl(2\sqrt{\pi}\sqrt{\f\pi2}+  \f{4}{\sqrt{5}}\Br) (n\t)^{-\f32} \t^{3/2} \notag\\ 
				&\geq\f {2 \sqrt{\t}}{n}   -\Bl(10\sqrt{\f\pi2}+ 2\sqrt{\pi}\sqrt{\f\pi2}   +  \f{4}{\sqrt{5}}\Br) (n\t)^{-\f32} \t^{3/2}. \label{eqn5.17}
	\end{align} Hence, by \eqref{2-3} and \eqref{eqn5.17}, we have 	\begin{align*} 
		I_{n,1}(\t) 
			 &	\geq   \f34   \f {2 \sqrt{\t}}{n}   -\f34   \Bl(10\sqrt{\f\pi2}+ 2\sqrt{\pi}\sqrt{\f\pi2}   +  \f{4}{\sqrt{5}}\Br) (n\t)^{-\f32} \t^{3/2}      -\f3{2n^2}  \t^{-\f12} \log(n\t) \left( \f4\pi(4+\sqrt{\f\pi2})+4 \right)   \\
				&\ge \f{3}{2n}\sqrt{\t}- 30.1067\times (n\t)^{-\f32}\t^{\f32}. 
  			 \end{align*}
  		 \end{proof}

\subsubsection{Estimate of the integral} We prove the following estimate:
\begin{lem} 
	For $n\t\ge 5$ and $\t\in (0,\f\pi2]$,
	\begin{align*}
	\f {4n(n+1)}{3\t^{\f 32} }	\int_0^\t (\t-t)^{\f 32} P_n(\cos t) \sin t \, dt \ge (\f32- \sqrt{2})(n\t)^{-1}- 164.9212\times(n\t)^{-\f32}.
	\end{align*}
	In particular, this implies that there  exists a determined  constant $A>1$ such that 
	$$ 	\int_0^\t (\t-t)^{\f 32} P_n(\cos t) \sin t \, d t>0$$
	whenever $n\t\ge A$. 
\end{lem}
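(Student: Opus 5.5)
We combine the decomposition lemma with the three estimates just proved. For $n\ge1$ and $\t\in(0,\f\pi2]$, the decomposition lemma gives
$$\f{4n(n+1)}3\int_0^\t(\t-t)^{\f32}P_n(\cos t)\sin t\,dt = I_{n,1}(\t)+R_{n,1}(\t)+R_{n,2}(\t)-R_{n,3}(\t).$$
The term $R_{n,2}(\t)$ is nonnegative, since $P_n(\cos t)\le 1$, so we simply drop it. We then bound $I_{n,1}(\t)$ from below by Lemma~\ref{lem2.6}. We bound $R_{n,1}(\t)$ from below by $-|R_{n,1}(\t)|$ using Lemma~\ref{lem2.4}, and $R_{n,3}(\t)$ from above using Lemma~\ref{lem2.5}. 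All three lemmas hold under the standing hypotheses $n\t\ge5$ and $\t\in(0,\f\pi2]$.

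Dividing by $\t^{3/2}$ and writing everything in terms of $n\t$, the main terms are as follows. From Lemma~\ref{lem2.6}, $\f{3}{2n}\sqrt\t\,\t^{-3/2}=\f32(n\t)^{-1}$. From Lemma~\ref{lem2.5}, the oscillatory term is $\f{\sqrt2}{n\t}\sin(N\t)\bigl(\f{\sin\t}{\t}\bigr)^{1/2}\le\sqrt2\,(n\t)^{-1}$, because $|\sin(N\t)|\le1$ and $0<\f{\sin\t}\t\le1$. Their difference gives the leading coefficient $\f32-\sqrt2>0$.

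The error terms are handled as follows. Lemma~\ref{lem2.6} contributes $30.1067\,(n\t)^{-3/2}$ and Lemma~\ref{lem2.5} contributes $92.1237\,(n\t)^{-3/2}$. Lemma~\ref{lem2.4} gives $42.6909\,(n\t)^{-2}$, and since $n\t\ge5>1$ we have $(n\t)^{-2}\le(n\t)^{-3/2}$. Adding the constants gives $30.1067+92.1237+42.6909=164.9213$, which matches the stated $164.9212$ up to rounding in the lemmas' constants. If needed, one absorbs the last digit by noting $(n\t)^{-2}\le 5^{-1/2}(n\t)^{-3/2}$, which leaves ample slack.

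For the final claim, the lower bound is $(n\t)^{-3/2}\bigl[(\f32-\sqrt2)(n\t)^{1/2}-164.9212\bigr]$. This is positive once $n\t>A:=\bigl(164.9212/(\f32-\sqrt2)\bigr)^2$, and this $A$ also exceeds $5$. The only delicate points are the bookkeeping of constants and the observation that the $\sin(N\t)$ term must be bounded by $1$ in absolute value, since its sign is uncontrolled. I expect no genuine obstacle, since the hard analysis is already contained in Lemmas~\ref{lem2.4}--\ref{lem2.6}.
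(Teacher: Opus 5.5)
Your proposal is correct and is essentially the paper's proof: you drop $R_{n,2}(\t)\ge 0$, insert Lemmas~\ref{lem2.4}, \ref{lem2.5} and \ref{lem2.6}, bound $\sin(N\t)\bigl(\f{\sin\t}{\t}\bigr)^{1/2}$ by $1$, and absorb $(n\t)^{-2}$ into $(n\t)^{-3/2}$, which gives $A\approx 3.6959\times 10^6$. The paper's own summation gives $164.9213$ rather than the stated $164.9212$. Your remark that $(n\t)^{-2}\le 5^{-1/2}(n\t)^{-3/2}$ leaves room to spare is therefore what makes the stated constant rigorously correct.
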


\begin{proof}
	By the Lemma \ref{lem2.4}, \ref{lem2.5}, and \ref{lem2.6}, for $n\t\geq5$, we have 
	\begin{align*}
	\f {4n(n+1)}{3\t^{\f 32}}&	\int_0^\t (\t-t)^{\f 32} P_n(\cos t) \sin t \, dt\\
	 \ge& \f32(n\t)^{-1} -  30.1067 \times (n\t)^{-\f32}  -\f { \sqrt{2}}{n\t} \sin (N\t)  \Bl( \f {\sin \t}{\t}\Br)^{\f12}   \\&-  92.1237\times(n\t)^{-\f 32}  - 42.6909  \times (n\t)^{-2}  \\
	 \geq& (\f32- \sqrt{2})(n\t)^{-1}-164.9213\times(n\t)^{-\f32}>0 ,
	\end{align*}  
whenever \begin{align*} 
 n\t\geq  \({\frac{164.9213}{\f32-\sqrt{2}}}\)^2> 3.6959\times 10^6.
\end{align*} This means there exists a determined positive constant $A$ such that the desired integral \eqref{5.1} is positive when $n\t\geq A$. 
\end{proof}

\subsection{Case (ii).}
In this case, we shall prove that there exists a constant $B\in (0,1)$  such that \eqref{5.1} is true whenever $n\ta\leq B$. 

To see this, we first note that
\begin{align}\label{eqn5.19}
\int_0^\t (\t-t)^{\f 32} P_n(\cos t) \sin t \, d t=\t^{\f 32}\int_0^\t (1-\f t \t)^{\f 32} P_n(\cos t) \sin t \, d t. 
\end{align} 
Following the same idea in the Case (ii) of odd dimensions,  we may rewrite \eqref{eqn5.19} as  
\begin{align*}
	 \int_0^\t (1-\f t \t)^{\f 32} P_n(\cos t) \sin t \, d t 
	&  = \int_0^\t  (1-\f t \t)^{\f 32} t\, dt + \int_0^{\ta}  (1-\f t \t)^{\f 32}  \Bl( P_n  (\cos t) \f {\sin  t} {t } -1\Br)t \, dt.
\end{align*}
Note that for any $t\in [0,\t]$,
\begin{align*}
	|P_n(\cos t)\f{\sin t}{t}-1|&\leq|P_n(\cos t)(\f{\sin t}{t})-\f{\sin t}{t}|+|\f{\sin t}{t}-1|\\
	&\leq|\f{\sin t}{t}|\cdot|P_n(\cos t)-1|+t\\&\leq|P_n(\cos t)-P_n(\cos 0)|+nt\\
	&\leq nt||P_n(\cos t')||_{\infty}+nt<2nt,
\end{align*}
and the second last inequality follows from Bernstein's inequality for trigonometric polynomials.
Then we have

 \begin{align*}
\ta^{-2 }\int_0^\t (1-\f t \t)^{\f 32} P_n(\cos t) \sin t \, d t
&\ge \ta^{-2 }\int_0^\t  (1-\f t \t)^{\f 32}t \, dt - 2 n \ta^{-2 } \int_0^\ta t^{2}\, dt
> \f 4{35} - 2n\ta  , 
\end{align*}
which implies that the desired integral \eqref{5.1} is positive whenever $n\ta< \f2{35}$.

\subsection{Case (iii).}\label{case 3}

In this case, we shall prove \eqref{5.1} for the remaining case $B \leq n\ta \leq A$.

We write
$$\int_0^\t (\t-t)^{\f 32} P_n(\cos t) \sin t \, d t=\t^{\f 32}\int_0^\t (1-\f t \t)^{\f 32} P_n(\cos t) \sin t \, d t.$$ 
To simplify our calculation, we will next show  $$\t^{-2}\int_0^\t (1-\f t \t)^{\f 32} P_n(\cos t) \sin t \, d t>0.$$
Let $N=n+\f12$, and by substitution $t=\f{t'}{N}$, we have $$ \ta^{-2 }\int_0^\t (1-\f t\t)^{\f 32} P_n(\cos t) \sin t \, d t = N^{-1} \ta^{-2 } \int_0^{N\ta} (1-\f {t'}{N\t})^{\f 32} P_n  (\cos\f {t'} N) \Bl(\sin \f {t'} N\Br) \, d{t'}.$$
By using  \eqref{C.1}, and taking   substitution $x=\frac{t'}{N\t}$, we have 
 
\begin{align}
 N^{-1} \ta^{-2 }  \int_0^{N\ta} &(1-\f {t'}{N\t})^{\f 32}  P_n  (\cos\f {t'} N) \Bl(\sin \f {t'} N\Br) \, d{t'}\notag\\
&\geq N^{-1} \ta^{-2 }  \int_0^{N\ta} (1-\f {t'}{N\t})^{\f 32}  \Bl(\frac{t'}{N}\Br)^{}\Bl(\f{\sin\frac{t'}{N}}{\frac{t'}{N}}\Br)^{\f12}  j_0(t') \, d{t'} - 0.1711\times n^{-1}\notag\\
&=  \int_0^1 (1-x)^{\f 32} j_{0} (N\ta x) x  \Bl( \f {\sin  \ta x }{\ta x} \Br)^{\f12} \, dx -  0.1711\times  n^{-1} \notag\\
&\ge  \sqrt{\f2\pi}   \int_0^1 (1-x)^{\f 32} j_{0} (N\ta x) x \, dx - 0.1711\times n^{-1}\notag \\
&\ge    
 \sqrt{\f2\pi} \int_0^1 (1-x)^{\f 32} j_{0} (ux) x^{ }\, dx  - 0.1711\times  B^{-1}\ta , \label{5.8}
\end{align} where $u:=N\t$. To  make the last expression   positive, we will find a positive lower bound for the integral $\int_0^1 (1-x)^{\f 32} j_{0} (ux) x  dx$ when $B \leq u\leq A$. By the reference \cite[Corollary 1.1, Page 551, 552, 556]{FI},  it shows

 \begin{eqnarray} \int_0^1 (1-x)^{\f 32} j_{0} (ux) x  dx\geq\f{\Gamma(2)\Gamma(\f52)}{\Gamma(1)\Gamma(\f92)} 
 	\begin{cases}
 		1-\f{2}{33}u^2, &0\leq u \leq \sqrt{\f{33}2}\approx 4.0620 \cr 
 		\f{2.0963}{u^3}, & u \geq \sqrt{12}\approx 3.4641 
 	\end{cases}.
 \end{eqnarray}
 
Thus, in our case, for $B \leq u\leq A$, we have to separate two cases to find the positive lower bound:

(1) When $B \leq u \leq \sqrt{12}$, we have 
\begin{align*}
	 \int_0^1 (1-x)^{\f 32} j_{0} (ux) x  dx \geq \f{\Gamma(2)\Gamma(\f52)}{\Gamma(1)\Gamma(\f92)}  (1-\f{24}{33})=\f{\Gamma(2)\Gamma(\f52)}{\Gamma(1)\Gamma(\f92)}\f{3 }{11}.
\end{align*}

(2) When $\sqrt{12}\leq u \leq A$, we have 
\begin{align*}
 \int_0^1 (1-x)^{\f 32} j_{0} (ux) x  dx  \geq \f{\Gamma(2)\Gamma(\f52)}{ \Gamma(1)\Gamma(\f92)} \f{2.0963}{A^3}.  
\end{align*} Notice that $\f{3 }{11}>\f{2.0963}{A^3}$. We will use (2) as the   positive lower bound.  Hence, the last step \eqref{5.8} is positive  if we assume $\t$ satisfying the following condition:

\begin{align*}
	\t<&\f{B}{0.1711}\cdot  \sqrt{\f2\pi}   \cdot \f{\Gamma(2)\Gamma(\f52)}{ \Gamma(1)\Gamma(\f92)} \f1{A^3} \times 2.0963 \leq 1.2644 \times 10^{-21}.
\end{align*}  
 
Finally, putting the three cases together completes the proof of Theorem \ref{thm5.3}.
\newpage

\appendix

\section{Some useful results about Jacobi polynomials}

In this appendix, we collect some formulas and properties of the Jacobi polynomials that are needed in the paper.

	\begin{enumerate}[\rm (i)]
		\item  \cite[(4.22.2)]{Sz}  For a positive integer $1\leq \ell\leq n$ and $\be\in\RR$, 
		\begin{equation} \label{1-2} P_n^{(-\ell,\be)} (x) =\f { \binom{n+\be} {\ell}}{\binom{n} {\ell}} \Bl (\f {x-1}2\Br)^{\ell} P_{n-\ell}^{(\ell,\be)} (x).\end{equation}
		\item \cite[(4.21.7)]{Sz}  For $\al,\be \in\RR$, 
		\begin{equation}\label{1-3} \f d{dx} P_n^{(\al,\be)}(x) =\f 12 (n+\al+\be+1) P_{n-1}^{(\al+1, \be+1)}(x).\end{equation}
	\item \cite[(4.1.3)]{Sz}	\begin{equation}\label{1-4}		P_n^{(\al,\be)}(-x) =(-1)^n P_n^{(\be, \al)} (x). \end{equation}
	Using  \eqref{1-2} and \eqref{1-4}, we obtain that 
		\begin{equation}\label{1-7}
			P_n^{(-1,-1)}(x) =-\f {1-x^2}{4} P_{n-2}^{(1,1)}(x),
		\end{equation}
		which,  by \eqref{1-3}, also implies  
		\begin{equation}\label{1-8}
			\int P_n(x)\, dx =\f 2{n} P_{n+1}^{(-1,-1)}(x)=-\f {1-x^2} {2n} P_{n-1} ^{(1,1)}(x).
		\end{equation}

		\item \cite[Page 295]{Hob}
		 For $\t\in(0,\f\pi2),$
	 	\begin{align*} 
	 	P_n (\cos\ta)=\f{2}{\pi^\f12}\f{\Gamma(n+1)}{\Gamma(n+\f32)} \f{\cos(N\t-\f\pi 4)}{(2\sin \t)^\f12}  + p_{n,1}(\cos \t),
	 \end{align*} where $$|p_{n,1}(\cos \t)|\leq\f{4}{\pi^\f12}\f{\Gamma(n+1)}{\Gamma(n+\f32)}\f1{2(2n+3)}\f1{(2\sin \t)^\f32}.$$  Applying the Gautschi's inequality: for $s\in(0,1)$, $x^{1-s}<\f{\Gamma(x+1)}{\Gamma(x+s)}< (x+1)^{1-s}$ with $x=n+\f12$ and $s=\f12$, we have
			\begin{align}\label{propA.4}
				 P_n (\cos\ta) \leq \f{2}{\pi^{\f12}}  \f{1}{n^{\f12}} \f{1}{(2\sin\t)^{\f12}} \cos\big(N\t-\f{\pi}{4}\big)+\f{1}{(2\pi)^{\f12}(\f2\pi)^{\f32}}\f{1}{(n\t)^{\f32}}.
			\end{align} 
	 
		\item \mbox{\cite[(7.3.8)]{Sz}}
		For $\t\in (0,\f \pi2]$, we have	\begin{equation}\label{Legendre estimates}		|P_n  (\cos\ta)|\leq \f {1}{n^{\f12} \ta^{ \f12}},		\end{equation} 	
		
		\item By \mbox{\cite[Page 213]{ADM} and \cite{Bara}}, 
		we can estimate that
		\begin{equation}\label{C.3}			|P_{n-1}^{(1,1)}(\cos t)|\leq \f { \pi^{\f32}\times 2.821}{{(n-1)}^{\f12} t^{\f32}}.
		\end{equation}

		\item \cite[p. 302]{As}  For $\ld\neq0$,   
		\begin{equation} \label{1-5} C_n^\ld(\cos\t) = \sum_{k=0}^n \f{\Gamma(\l+k)\Gamma(\l+n-k)}{k!(n-k)! [\Gamma(\l)]^2} \cos[(n-2k)\t].\end{equation}
	\end{enumerate}

\section{A useful asymptotic estimation}

The following two useful asymptotic formulas  can be found in the book \cite [p. 24, (11.5),(11.6)]{Co}: 

\begin{prop} \label{lem-asym} 
	Let $\phi(t)$ be $v$ times continuously differentiable in $\a\leq t \leq \b$. 	Let $\phi(t)$ and its first  $v-1$ derivatives vanish when $t=\beta$. Then, if  $0<\lambda<1$, as $N\rightarrow \infty$,
	\begin{align*}  \int_\a^\b e^{iNt}   (t-\a)^{\l-1}\phi(t)  dt
	 =   \sum_{n=0}^{v-1} \f{\Ga(n+\l)}{ n! N^{n+\l}}  e^{iN\a +\f12\l\pi i+\f12 n \pi i}\phi^{(n)}(\a)   +  O(N^{-v}),
		\end{align*} where \begin{align*}
O(N^{-v})=\f{1}{N^v}\int_{\a}^{\b}|\phi^{(v)}(t)|(t-\a)^{\l-1}dt.
	\end{align*}
	\end{prop}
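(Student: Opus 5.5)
The plan is to prove the proposition with Erdélyi's method of repeated integration by parts. The antiderivatives of $e^{iNt}(t-\a)^{\l-1}$ will be taken along vertical rays in the complex plane, so that they decay and the boundary contributions come out as explicit Gamma functions.

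\textbf{Step 1: the auxiliary functions.} For $t>\a$ and $k\ge 1$ set
$$\Phi_k(t):=\f{(-1)^k}{(k-1)!}\int_t^{t+i\infty}(s-t)^{k-1}(s-\a)^{\l-1}e^{iNs}\,ds ,$$
and $\Phi_0(t):=(t-\a)^{\l-1}e^{iNt}$. Here $(s-\a)^{\l-1}$ is the principal branch, which is analytic in the upper half plane slit away from $s=\a$. Parametrizing $s=t+iy$, the integrand contains $e^{-Ny}$, so the integral converges absolutely. Differentiating under the integral sign then gives $\Phi_k'=\Phi_{k-1}$ for $k\ge 1$.

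For $0<\l<1$ the functions $\Phi_k$ extend continuously to $t=\a$, since the singularity $(s-\a)^{\l-1}$ is integrable along the ray $s=\a+iy$. Their values there are explicit:
$$\Phi_{n+1}(\a)=\f{(-1)^{n+1}}{n!}\,i^{n+\l}\int_0^\infty y^{n+\l-1}e^{iN\a}e^{-Ny}\,dy=(-1)^{n+1}\f{\Ga(n+\l)}{n!N^{n+\l}}e^{iN\a+\f12(n+\l)\pi i}.$$

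\textbf{Step 2: integration by parts.} Integrating by parts $v$ times gives
$$\int_\a^\b \phi\,\Phi_0\,dt=\sum_{n=0}^{v-1}(-1)^n\Bl[\phi^{(n)}\Phi_{n+1}\Br]_\a^\b+(-1)^v\int_\a^\b\phi^{(v)}\Phi_v\,dt .$$
The terms at $t=\b$ vanish by the hypothesis on $\phi$. The terms at $t=\a$ contribute $-(-1)^n\phi^{(n)}(\a)\Phi_{n+1}(\a)$. Inserting the formula from Step 1, the signs combine to $+1$ and each such term is exactly the $n$-th term of the claimed sum.

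\textbf{Step 3: the remainder.} On the ray $s=t+iy$ we have $|s-\a|\ge t-\a$. Since $\l-1<0$, this gives $|(s-\a)^{\l-1}|\le(t-\a)^{\l-1}$. Hence
$$|\Phi_v(t)|\le \f{(t-\a)^{\l-1}}{(v-1)!}\int_0^\infty y^{v-1}e^{-Ny}dy=(t-\a)^{\l-1}N^{-v}.$$
This yields precisely the stated bound $N^{-v}\int_\a^\b|\phi^{(v)}(t)|(t-\a)^{\l-1}dt$.

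\textbf{Main obstacle.} The delicate step is justifying the integration by parts at the endpoint $\a$, where $\Phi_0$ is singular. I would handle this by integrating over $[\a+\va,\b]$ and letting $\va\to0$. The integrated pieces converge because each $\Phi_k$ with $k\ge1$ is continuous up to $\a$. The only piece involving the singular function $\Phi_0$ is $\int\phi\,\Phi_0$, and it converges because $(t-\a)^{\l-1}$ is integrable. I would also check that the branch choice produces the phase $e^{\f12(n+\l)\pi i}$ through $i^{n+\l}=e^{\f12(n+\l)\pi i}$.
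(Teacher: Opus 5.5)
Your proposal is correct and is essentially the standard argument behind this result. The paper gives no proof of its own and quotes it from Copson, who presents Erdélyi's method: antiderivatives of $e^{iNt}(t-\alpha)^{\lambda-1}$ taken along vertical rays, endpoint values $\Phi_{n+1}(\alpha)$ computed as Gamma integrals, and the bound $|\Phi_v(t)|\le (t-\alpha)^{\lambda-1}N^{-v}$, which yields exactly the stated form of the remainder. Your treatment of the singular endpoint via $[\alpha+\varepsilon,\beta]$ is fine; the one step to spell out is $\Phi_k'=\Phi_{k-1}$, which needs differentiating followed by one integration by parts in $y$, using $\partial_t G(t+iy)=-i\,\partial_y G(t+iy)$ for $G(s)=(s-\alpha)^{\lambda-1}e^{iNs}$, rather than differentiation under the integral sign alone.
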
 

\begin{prop} \label{lem-asym2}
	Let $\phi(t)$ be $v$ times continuously differentiable in $\a\leq t \leq \b$. 	Let $\phi(t)$ and its first  $v-1$ derivatives vanish when $t=\a$. Then, if  $0<\mu<1$, as $N\rightarrow \infty$,
	\begin{align*} 
		\int_\a^\b e^{iNt}   (\b-t)^{\mu-1}\phi(t)  dt 
		=   \sum_{n=0}^{v-1} \f{\Ga(n+\mu)}{ n! N^{n+\mu}}  e^{iN\b - \f12\mu\pi i+\f12 n \pi i}\phi^{(n)}(\b)   + O(N^{-v}).
	\end{align*}
\end{prop}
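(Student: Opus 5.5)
The plan is to deduce Proposition \ref{lem-asym2} from Proposition \ref{lem-asym} by reflecting the interval $[\a,\b]$ and taking complex conjugates. No new asymptotic analysis should be needed.

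\emph{Step 1 (reflection).} Substitute $t=\a+\b-s$ and set $\psi(s):=\phi(\a+\b-s)$. Then $\b-t=s-\a$ and $e^{iNt}=e^{iN(\a+\b)}e^{-iNs}$, so
$$\int_\a^\b e^{iNt}(\b-t)^{\mu-1}\phi(t)\,dt=e^{iN(\a+\b)}\int_\a^\b e^{-iNs}(s-\a)^{\mu-1}\psi(s)\,ds .$$
The function $\psi$ is $v$ times continuously differentiable on $[\a,\b]$, and $\psi^{(k)}(s)=(-1)^k\phi^{(k)}(\a+\b-s)$. The hypothesis that $\phi,\dots,\phi^{(v-1)}$ vanish at $\a$ therefore becomes: $\psi,\dots,\psi^{(v-1)}$ vanish at $\b$. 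This is exactly the hypothesis of Proposition \ref{lem-asym}, with $\l=\mu\in(0,1)$.

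\emph{Step 2 (conjugation).} Proposition \ref{lem-asym} has $e^{+iNs}$, whereas Step 1 produced $e^{-iNs}$. Apply Proposition \ref{lem-asym} to $\overline{\psi}$, which satisfies the same hypotheses, and take complex conjugates of the resulting identity. This gives
$$\int_\a^\b e^{-iNs}(s-\a)^{\mu-1}\psi(s)\,ds=\sum_{n=0}^{v-1}\f{\Ga(n+\mu)}{n!N^{n+\mu}}e^{-iN\a-\f12\mu\pi i-\f12 n\pi i}\psi^{(n)}(\a)+O(N^{-v}).$$
The remainder keeps the explicit form $N^{-v}\int_\a^\b|\psi^{(v)}(s)|(s-\a)^{\mu-1}ds$, since conjugation does not change absolute values.

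\emph{Step 3 (bookkeeping).} Multiply by $e^{iN(\a+\b)}$ and substitute $\psi^{(n)}(\a)=(-1)^n\phi^{(n)}(\b)=e^{n\pi i}\phi^{(n)}(\b)$. The phases combine as
$$e^{iN(\a+\b)}\,e^{-iN\a-\f12\mu\pi i-\f12n\pi i}\,e^{n\pi i}=e^{iN\b-\f12\mu\pi i+\f12 n\pi i},$$
which is exactly the phase in the claimed expansion. The remainder transforms back under the same change of variables to
$$N^{-v}\int_\a^\b|\phi^{(v)}(t)|(\b-t)^{\mu-1}\,dt,$$
so the error term has the same explicit form as in Proposition \ref{lem-asym}.

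The only place where care is needed is the sign and phase tracking in Step 3: the factor $(-1)^n$ from differentiating the reflection must combine with the conjugated $e^{-\f12 n\pi i}$ to give $e^{+\f12 n\pi i}$. Everything else follows directly from Proposition \ref{lem-asym}.

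If one prefers a self-contained proof, the alternative is to repeat the argument for Proposition \ref{lem-asym} itself (as in \cite{Co}). Write $e^{iNt}(\b-t)^{\mu-1}$ as the derivative of a Gamma-type integral $-\int_t^{\b}\cdots$ taken along a ray in the complex plane, and integrate by parts $v$ times. The boundary terms at $\a$ vanish by hypothesis, the terms at $\b$ produce the Gamma coefficients, and the final integral gives the stated $O(N^{-v})$ remainder.
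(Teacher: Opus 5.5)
Your argument is correct, and it differs from the paper, which gives no derivation at all: it quotes Proposition~\ref{lem-asym} and Proposition~\ref{lem-asym2} separately from \cite{Co} as (11.5) and (11.6). You instead reflect with $t=\alpha+\beta-s$ and then conjugate, which shows that the second proposition is a formal consequence of the first. Your phase check $e^{iN(\alpha+\beta)}e^{-iN\alpha-\frac12\mu\pi i-\frac12 n\pi i}(-1)^n=e^{iN\beta-\frac12\mu\pi i+\frac12 n\pi i}$ is right, and the remainder transfers exactly.

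Your route also buys something concrete. It yields the explicit bound $N^{-v}\int_\alpha^\beta|\phi^{(v)}(t)|(\beta-t)^{\mu-1}\,dt$, which the paper writes down only for Proposition~\ref{lem-asym}. Yet the numerical constant $13.0552/N$ in the proof of Lemma~\ref{lem2.5} comes from applying Proposition~\ref{lem-asym2}, since the integrand there carries $(\theta-t)^{-1/2}$. Your conjugation step is also exactly what justifies the $e^{-iNt}$ version used there. The paper's citation is shorter. Your reduction still depends on Proposition~\ref{lem-asym}, so it relocates rather than removes the reliance on \cite{Co}.

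If you write out the self-contained alternative, take the antiderivatives along the vertical ray from $t$ to $t+i\infty$, where $e^{iNz}$ decays, not ``from $t$ to $\beta$''. That is, set $h_k(t)=\frac{(-1)^k}{(k-1)!}\int_t^{t+i\infty}(z-t)^{k-1}e^{iNz}(\beta-z)^{\mu-1}\,dz$, so that $h_k'=h_{k-1}$ with $h_0(t)=e^{iNt}(\beta-t)^{\mu-1}$. Then $|\beta-t-iy|\ge\beta-t$ and $\mu<1$ give $|h_v(t)|\le N^{-v}(\beta-t)^{\mu-1}$, which is the remainder with constant $1$. The boundary values $h_k(\beta)$ produce the Gamma coefficients.
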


\section*{Acknowledgement}
 The authors are grateful to Professor Feng Dai, Professor Yuan Xu for helpful discussions. They also express their deep gratitude to an anonymous referee for giving many helpful comments and constructive suggestions that led to an improved presentation of this paper. The authors are grateful to Elena Berdysheva and Joseph Zelezniak for pointing out an issue in the statement of Theorem~1.7 in the published version. The first author is supported partially by the Research Grants Council
 of Hong Kong [Project \# CityU 21207019] and by the City University of Hong
 Kong [Project \# CityU 7200608]. The second author is supported partially by the NSERC Canada	under grant RGPIN-2020-03909.

\end{document}